\numberwithin{equation}{section}
\apptocmd{\thebibliography}{}{}{}
\renewcommand*{\Re}{\mathop{}\!\mathrm{Re}\,} 
\theoremstyle{plain}
\newtheorem{theorem}{Theorem}[section]
\newtheorem{lemma}[theorem]{Lemma}
\newtheorem{corollary}[theorem]{Corollary}
\newtheorem{proposition}[theorem]{Proposition}
\newtheorem{reminder*}{[theorem]Reminder}
\newtheorem{details*}[theorem]{Details}
\newtheorem{comm*}{Comment}
\newtheorem{definition}[theorem]{Definition} 
\newtheorem{definition*}{[theorem]Definition}
\newtheorem{notation*}{Notation}
\title{A stochastic version of the Hopfield-Ninio kinetic proofreading model}
  \author{E. Franco, J. J. L. Velázquez}
\begin{document}

\maketitle

\tableofcontents
\begin{abstract}
     In this paper we study a simple stochastic version of the Hopfield-Ninio kinetic proofreading model. 
     The model is characterized by means of two parameters, the unbinding time, which depends on the binding energy between a ligand and a receptor, and the number of times $M \geq 1$ that a ligand attaches to a receptor. 
We prove that, under suitable assumptions on $M$, our model has an extreme specificity, i.e. it is capable to discriminate between different ligands, and a high sensitivity, i.e. the response of the system does not change in a significant manner for ranges of ligands varying within several orders of magnitude.
Additional quantities like the amount of energy used by the network or the time required to yield a response will be also computed.
We also show that our results are robust, i.e.~they do not depend on the specific choice of parameters that we make in this paper. 
\end{abstract}
\textbf{Keywords:} chemical networks; kinetic proofreading; specificity; sensitivity; asymptotic analysis. 

\section{Introduction}

\subsection{Kinetic proofreading models}
The goal of this paper is to study the properties of a simple stochastic version of the Hopfield/Ninio kinetic proofreading (KPR) model. 
The classical kinetic proofreading chemical networks have been proposed by Hopfield and Ninio in the 1970's, (see \cite{Hopfield, Ninio}) in order to find a possible explanation of the very low error rate ($10^{-4}$) in protein synthesis and in DNA replication (error rate $10^{-9}$). 
We refer to \cite{alon2019introduction} for a review on the topic.

The main feature of the Hopfield/Ninio chemical networks is that they allow to discriminate between ligands, that have different affinities with a receptor, in a much more efficient manner than the one that could be expected in a system in equilibrium at constant temperature.
This much larger discrimination efficiency can be achieved because the networks operate out of equilibrium, and there is a continuous influx of ATP in the system, as well as a continuous outflux of ADP, that feeds a sequence of phosphorylation reactions.

The kinetic proofreading mechanisms introduced by Hopfield and Ninio were adapted by McKeithan (see \cite{mckeithan1995kinetic}) in order to explain the ability of $T$-cell receptors to discriminate between different antigens. 
Indeed, the acquired (or adaptive) immunity relies on the capability of immune cells to distinguish between the body's own antigens and foreign antigens.
When a ligand attaches to a $T$-receptor it may or may not trigger a response of the immune system. We assume that a self-antigen, which is an antigen (usually a protein) that is produced by the organism to which the $T$-cell belongs, typically does not trigger an immune response. While foreign antigens, that are produced outside the organism, usually trigger the immune response.  

The kinetic proofreading mechanism is characterized by a reversible reaction yielding the attachment between the ligand and the receptor, followed by a sequence of phosphorylation events that take place in a non-reversible manner, spending an ATP molecule in each phosphorylation step.
In the context of immune recognition the receptor is typically a receptor of a T-cell located on the surface of the $T$-cell, while the ligand is a pMHC, i.e.~a complex made of a major histocompatibility complex (MHC) and a peptide.
We refer to \cite{janeway2001immunobiology} for a detailed description of the functioning of  MHCs. 
The T-cells scan the set of peptides presented by the MHCs and must trigger the reaction of the immune system only if one of the peptides is recognised as specific of a foreign antigen. The kinetic proofreading mechanisms is therefore needed to avoid errors in the recognition process.

The Hopfield/Ninio kinetic proofreading network is usually modelled using systems of ODEs. It is then possible to compute a stationary solution under the assumption that the concentration of ligands is constant. 
These stationary solutions can be interpreted as the solutions yielding the flux of the complexes receptor-ligand to the maximally phosphorylation  state. 
The kinetic proofreading network can be thought as a switch that, upon the attachment of a ligand to the receptor, yields a response after a waiting time. 
These stationary solutions allow to compute also the probabilities at which the responses take place for a given ligand. 

These probabilities of response are functions of the binding energies (or characteristic unbinding times) between the ligand and the receptor. 
Suppose that two ligands have binding energies $E_1 $ and $E_2 $ respectively. Let us denote with $p_1 $ the probability of response of the first ligand and with $p_2 $ the probability of response of the second ligand.
It turns out, see for instance \cite{Hopfield}, that, if the system works in equilibrium, then 
\begin{equation} \label{bad ration}
\frac{p_1}{p_2} = e^{(E_1-E_2)/k_B T } 
\end{equation}
where $k_B $ is the Boltzmann constant and $T$ is the temperature in the system. In all this paper we will always use units of energy so that $k_B T =1 $. 
Notice that the ratio $\frac{p_1}{p_2} $ is sometimes called the potency of the system, see \cite{altan2005modeling}.

It was already noticed (see \cite{Hopfield} and references therein) that this would not be enough to explain the low error rate in processes like protein synthesis and in DNA replication. 
Similarly, it is shown in \cite{mckeithan1995kinetic} that this differences of energies $E_1, E_2 $ are not large enough to allow the immune system to discriminate between foreign antigens and self-antigens. 
If we consider, instead, a kinetic proofreading with $N$ steps (phosphorylation events) we improve the discrimination capability, obtaining that 
\begin{equation} \label{compare fluxes}
\frac{p_1}{p_2} = \left(e^{E_1-E_2 } \right)^N , 
\end{equation}
instead of \eqref{bad ration} (we recall that we are using units of energy of $k_B T $). 
The improvement in \eqref{compare fluxes} was noticed in \cite{mckeithan1995kinetic} and, for $N=2$, already in \cite{Hopfield, Ninio}.

Many deterministic generalizations of the Hopfield/Ninio kinetic proofreading chemical network have been studied, including models with feedback and with an inhibition effect (as \cite{chan2004feedback,franccois2013phenotypic,rendall2017multiple}), with rebinding (see \cite{dushek2009role,dushek2014induced}) and time dependent versions of the Hopfield model (see for instance \cite{franco2023description,sontag2001structure}). All these deterministic approaches, based on systems of ODEs, are valid in order to describe systems of many receptors interacting with many ligands. 
The solutions of the ODEs then yield the dynamics of the concentrations of the complex ligand-receptor in different phosphorylation  states.

However, there is experimental evidence that T-cells have a very high degree of sensitivity and low ligands (pMHCs) densities corresponding to foreign antigens can trigger the response, see for instance \cite{altan2005modeling,huang2013single}. 
Therefore, in order to describe the individual response of one receptor to a ligand, a stochastic approach is more natural. This is the reason why we study a probabilistic model of kinetic proofreading. 
We refer to \cite{chan2004feedback,currie2012stochastic,kirby2023proofreading} for some other probabilistic models of kinetic proofreading.
In particular, the model studied in this paper has analogies with the models studied in \cite{bel2009simplicity,currie2012stochastic,munsky2009specificity}. However, here we focus in the study of the discrimination properties of the model in suitable asymptotic regimes. 
We stress that other stochastic models have been extensively used in order to model other biological systems, for instance to model enzyme dynamics for single molecule reactions, models of single ion-channels and also in models of cell polarization  (see \cite{altschuler2008spontaneous,english2006ever,qian2002single,xie2001single}).

\begin{figure}[H] 
\centering
\includegraphics[width=0.5\linewidth]{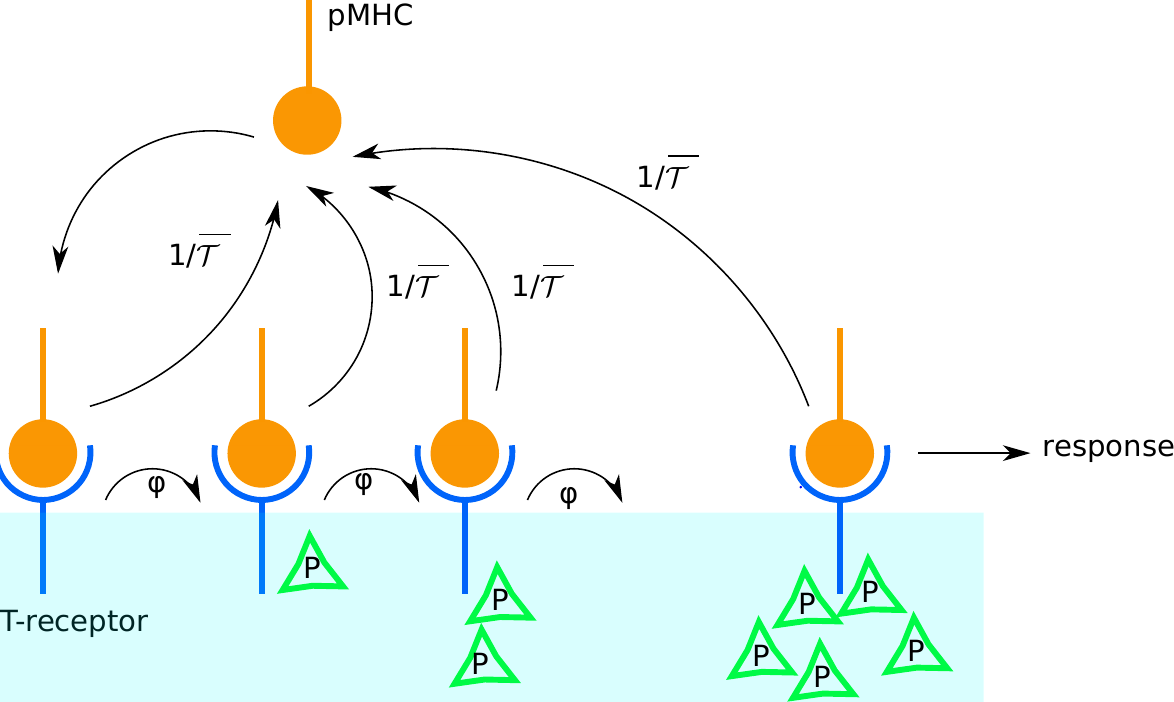}
\caption{KPR model studied in this paper
}\label{fig1}
\end{figure}

\subsection{A probabilistic kinetic proofreading model}

Several changes to the classical KPR network formulated by McKeithan have been suggested in order to improve its capability to discriminate between different ligands (see for instance \cite{franccois2016case,kirby2023proofreading,lever2014phenotypic,xiao2024leisure}).
For example, a model of KPR including an inhibitory loop in the last KPR step has been proposed in \cite{franccois2016case}, while a model with rebinding has been formulated in \cite{altan2005modeling}. 
In this paper, we analyse a simple probabilistic model of kinetic proofreading, in which a ligand is allowed to bind to the receptor $M $ times. As we will see, this simple model has (under specific assumptions on the number $M \geq 1$ of times that a ligand can reattach to the receptor) the key properties that are expected from a model of $T$-cell activation.  

We study a stochastic kinetic proofreading model using the theory of Markov processes.
This allows us to describe the evolution of individual complexes ligand-receptors that can be at different phosphorylation  states. 
We will introduce a Markov process yielding the probability of having a complex receptor-ligand at a certain phosphorylation  state at  time $t>0$, given that the ligand attached to the receptor at time $t=0$. 

We will denote the transition rate from one phosphorylation  state to another one as $\varphi$. We will assume that $\varphi$ does not depend on the phosphorylation state of the complex receptor-ligand. Moreover, we assume that the phosphorylation rate $\varphi$ does not depend on the type of ligand. 
On the other hand, the probability of unbinding (or detachment rate) of the ligand from the receptor is $1/\overline \tau $.

One of the properties characterizing the Markov process will be then the value of $\tau=\varphi \overline \tau $, the unbinding time in units of the phosphorylation time $1/\varphi$. 
However, in order to understand the robustness of the model under perturbations of the parameters, we analyse, in Section \ref{sec:different rates}, a model in which the phosphorylation rates and the unbinding times are dependent on the phosphorylation state of the complex ligand-receptor. 

The reaction describing the attachment of the ligand to the receptor can be formulated as 
\begin{equation} \label{attch-detach}
L + R  \overset{k_a} { \underset{k_d} \rightleftarrows}  LR 
\end{equation}
where $L$ is the ligand (i.e.~the complex pMHC) and $R$ the $T$-cell receptor. 
We denote with $LR $ the complex ligand-receptor, that will be denoted later as the state $(1)$. 
We denote the rate of attachment and detachment as $k_a $ and $k_d $ respectively. 
We remark that, using suitable time units, we can assume that $k_a = 1 $. 
On the other hand, the detailed balance condition allows to write 
\[ 
k_a = k_d e^{ E }
\]
where $E $ is the binding energy between the ligand and the receptor (measured in units of $k_B T $). 
This binding energy is different for different ligands and it is the main property characterizing the affinity of the ligand with the receptor.
An equivalent way of characterizing the tendency of the ligand to attach to the $T$-cell receptor is the average unbinding time, that we will denote as $\tau$. 
Then, since we are assuming that $k_a = 1 $ we have that 
\begin{equation} \label{time and energy}
\tau = e^E 
\end{equation} 
Notice that larger $\tau $ correspond to larger binding energies $E$. 

In this paper we will not include explicitly in the model the attachment reaction $L + R \rightarrow LR$ (with rate $k_a$). Instead, we will assume, as indicated above, that for a given ligand, the complex $LR $ is formed $M $ times (perhaps with different receptors) and we will examine how the dynamics of the system depends on the number $M $ of times that a ligand attaches to the receptor.
On the other hand, we assume that the detachment rate for the phosphorylated states is basically the same for all the phosphorylation states. 
This is justified by the separation, through the membrane, of the receptor and the phosphorylation sites. 
Since the detachment of the ligand from the receptor is independent on the phosphorylation state, then the detailed balance condition of the reactions detachment/attachment implies that the probability of attachment of a ligand to a phosphorylated receptor is very small and it will be neglected in this paper.

A second parameter characterizing our model is $M$, the number of times that a ligand, i.e.~a pMHC, attaches to a dephosphorylated receptor in order to attempt to produce a response.  
This number might depend on many factors, for instance on the lifetime of the pMHC and the motion of the T-cells, among others. 
Unfortunately, we are not aware of experimental measurements of $M$.
In this paper we assume that the number of trials $M$ depends on $N $, the number of steps (phosphorylation events) that take place during the kinetic proofreading process. 

We will assume that $M$ is deterministic, however $M$ is, in general, a random variable. It would be possible to study a model in which $M$ is a random variable described by a probabilistic law.
In this case the different quantities computed in the paper, as for instance the critical unbinding time and the number of ATP molecules consumed by the KPR, would be random variables.
It would be possible to compute statistical properties of these quantities averaging with respect to the probability measure characterizing $M$.
We can then compute different quantities taking averages with respect to the probability measure characterizing $M.$

We are interested, in particular, on the behaviour of the system as $N \to \infty $.
It turns out that interesting asymptotic behaviours arise assuming that $M $ is a function of $N$. 
We will analyse different possible properties of the network  for different choices of the function $M = M(N)$.   

We stress that there could be alternative ways of introducing the reattachment of the ligands to the receptor in the model.
For instance formulating a closed model with reattachment of the ligand to the receptor, i.e.~including in the model the attachment reaction in \eqref{attch-detach}. 
However, in this paper, we will focus on analysing how the properties of the kinetic proofreading change depending on $M$.

\subsection{Overview of the main results obtained in this paper}

We derive precise estimates for the probability of response of the stochastic kinetic proofreading model in terms of the unbinding time $\tau $, the number of trials $M$ and the number of ligands $L$, when the number of kinetic proofreading steps $N$ is large.
 This will allow us to analyse some features of our model. 
Since there is experimental evidence that T-cells respond quickly to a signal with very high specificity and very high sensitivity, a mathematical model of T-cells activation must necessarily satisfy these three properties (specificity, speed, sensitivity), that are referred often as \textit{golden triangle} according to the terminology introduced by \cite{feinerman2008quantitative} and used also by \cite{franccois2016case}. 
 We will examine in detail whether our model has these features if the number $N $ of KPR steps is large. 
Moreover, we also compute the amount of energy consumed by the KPR process as well as the fluctuations of the flux of product of the network, when the number of trials $M $ is large. 

Finally we will also compare our KPR model with other models, in particular with a simple model in which the KPR chain of phosphorylation events is substituted by a simple delay reaction. We will show that the model with delay is not capable of discriminate between different ligands efficiently. 
We will also study briefly a generalization of our KPR in which the detachment time and the phosphorylation rates depend on the phosphorylation state of the complex ligand receptor. 

In this paper we will use the following notation. 
We use the notation $ f \sim g $ as $N \to \infty $ to indicate that $f $ and $g $ are asymptotically equivalent, i.e.~$\lim_{N \to \infty} \frac{f }{g } =1 $. 
We use the notation $f \approx g $ to say that there exists a constant $C>0 $ such that $ \frac{1}{C } \leq \frac{f }{ g } \leq C  $. 
Similarly, we use the notation $f \gtrsim g $ to indicate that there exists a constant $C>0 $ such that $ C f \geq g  $.
Finally we say that $f \ll g $ as $N \to \infty $ if $\lim_{N \to \infty } \frac{f }{g }=0.$
\bigskip

\textbf{Specificity of the model: sharp transition from non-response to response}

\bigskip

We say that a model satisfies the \textit{specificity} property if there exists a \textit{critical unbinding time} $\tau_c $, depending on $M$ and on $N$, such that, as $N \to \infty$, the probability that the attachment of a ligand to a receptor triggers a response changes abruptly from zero (non-response) to one (response) when $\tau $ is close to $\tau_c $. 
Therefore a system satisfies the specificity property if it is able to distinguish, with an extremely low error rate, between a self-antigen and a foreign antigen. 

 In this paper, we will study the specificity of our model in Section \ref{sec:specificity}.  We analyse the probability that a ligand yields a response in terms of the unbinding time $\tau $ and of the number of trials $M$, when the number of kinetic proofreading steps $N$ is large and when the number of ligands is equal to $1$. 
We prove that, for every possible behaviour of $M $ as a function of $N$, there exists a critical unbinding time $\tau_c $, separating the region of unbinding times leading to response ($\tau > \tau_c$) from the region of unbinding times that do not lead to response ($\tau_c > \tau $).
The critical unbinding time $\tau_c$ is such that $\tau_c \rightarrow \infty $ as $N \to \infty $ when $ \log(M) \ll N $ as $N \to \infty $, while it is of order $1 $ when $ \log(M) \approx N $ as $N \to \infty $ and $\tau_c \rightarrow 0 $ as $N \to \infty $ when $ \log(M) \gg N $ as $N \to \infty $. 

Moreover, we prove that when $M \to \infty $ as $N \to \infty $ we have that, up to a suitable scaling of the time variable (i.e.~for $\tau - \tau_c = x \varepsilon_{N,M}$ where $\varepsilon_{N, M } $ could be a constant or could converge to zero as $N \to \infty$ depending on the asymptotics of the critical unbinding time $\tau_c $), the probability of response is given by $1-e^{-e^{x}}$ for $x \in \mathbb R$ as $N \to \infty $. 
Hence we have a sharp transition from response to non-response when $M \to \infty $ as $N \to \infty $. 
See also Figure \ref{fig3}. 
Instead when $M \approx 1 $ we obtain that, up to a suitable scaling, the probability of response is $1-\left(1- e^{-\frac{x}{\log(M)}}\right)^M$ as $ N \to \infty $. Also in this case we have a transition from non-response to response, but the transition is less sharp than when $M \to \infty $ as $N \to \infty $, in the sense that the range of values of $\tau$ in which the transition from non-response to response takes place is wider.  

 For all the possible behaviours of $M $ the transition from non-response to response takes place in a region of thickness equal to $ \frac{1+\tau_c }{N} $, i.e.~the model satisfies the specificity property when the unbinding time $\tau $ is such that 
 \begin{equation} \label{uncertanty principle time}
\left|  \frac{\tau}{\tau_c} - 1 \right| \gtrsim \frac{1+\tau_c }{N }
 \end{equation}
 as $N \rightarrow \infty $. 
When \eqref{uncertanty principle time} is not satisfied the kinetic proofreading model that we consider does not have the capability to discriminate between ligands leading to a response and ligands that do not lead to a response.
Notice that \eqref{uncertanty principle time} can also be written in terms of energies: the difference between the binding energy of the ligand with the receptor, $E$, and the critical binding energy $E_c$ (corresponding to the critical unbinding time $\tau_c = e^{E_c}$ as in \eqref{time and energy}) allows for discrimination if
\begin{equation} \label{uncertanty principle}
|E- E_c | \gtrsim \frac{1+\tau_c } {N }.
\end{equation}
Hence $\frac{1+\tau_c } {N }$ can be interpreted as the minimal discrimination energy that allows the network to satisfy the specificity property. 
This minimal energy depends on the asymptotic behavior of $\tau_c $ as $N \rightarrow \infty $, hence of $M$.

We stress that the thickness of the transition region from non-response to response, depends on the value of $M$ and in particular on its dependence of $N$.
When $M$ is of order one as $N \to \infty $ the thickness of the region of transition is of order one because $\tau_c \sim N $ as $N \to \infty$. 
Instead, when $M \gg 1 $ and $\log(M) \ll N  $, the thickness of the region of transition is of the order of $\frac{1}{\log(M)}$ because in this case $\tau_c \sim \frac{ N }{ \log(M) }$ as $N \to \infty$. 
When $ \log(M) \approx N $ as $N \to \infty $ then, since $\tau_c \approx 1 $, the thickness of the region where we have lack of specificity is $1/N$, hence tends to zero as $N \to \infty$. 
Finally, when $ \log(M) \gg N $ as $N \to \infty $ then, since $\tau_c \rightarrow 0 $ as $N \to \infty $, the thickness of the region where the transition from non-response to response takes place is $1/N$, hence tends to zero as $N \to \infty$. 
The specificity of the model improves as the number of trials increases, as it could be expected.

We also compare our kinetic proofreading model with a simple model with a delay. when the number of ligands is equal to $1$.  
In this model with a delay the chain of the $N $ phosphorylation reactions is substituted with one single reaction taking place at rate $1/N$, i.e.~yielding to a delay of time equal to $N$. 
We deduce that the KPR model exhibits a strong exponential dependence, for some values of $\tau $, that does not appear for models with delay. This exponential dependence is the root of the specificity property of the KPR models (see Section \ref{sec:delay}).

Finally, we consider a generalization of our KPR model in which the phosphorylation rate and the unbinding rate depend on the state of the complex ligand-receptor. 
We prove that this system also satisfies the specificity property, hence this property is robust and does not depend on the specific choice of the rates that we make. 

\bigskip 
 
\textbf{Sensitivity of the model: independence of the critical unbinding time on the number of ligands}

\bigskip

 In this paper we say that a kinetic proofreading model satisfies the \textit{sensitivity} property if the critical unbinding time $\tau_c$ is independent on the number of ligands. 
 This means that the probability that a set of ligands triggers a response does not depend in a significant manner on the number of ligands. 
The detailed asymptotic analysis of the probability of response, in Section \ref{sec:specificity}, is done analysing first the response of the model to one ligand, i.e.~we assume that the number of ligands $L$ is equal to $1$. 
We then consider the case of $L> 1 $ and analyse the changes in the critical unbinding time and in the asymptotics of the probability of response. 
We prove that, if the number of ligands is such that $\log(L) \ll N $ and $ \log(M) \approx N $ as $N \to \infty$, then the critical unbinding time is basically independent on the number of ligands.
Hence in this case the kinetic proofreading model satisfies the sensitivity property. 
We analyse also the case in which $\log(M) \ll N  $ as $N \to \infty $ and $\log(M) \gg N  $ as $N \to \infty $. In particular, when $\log(M) \ll N  $ and $ \log (L) \ll N $ as $N \to \infty $, then we obtain that $\tau_c \rightarrow \infty $ as $N \to \infty $. This happens also when the number of ligands $L $ is equal to $1$. Similarly, when $\log(M) \gg N  $ and $ \log (L) \ll N $ as $N \to \infty $ then $\tau_c \rightarrow 0$ as $N \to \infty $, as we obtained for $L=1$ (see Section \ref{sec:sensitivity} for precise results). 

\bigskip 

\textbf{Speed of KPR}

\bigskip 
Finally, the \textit{speed} of the kinetic proofreading process is the time, $T_{response}$,  that a ligand takes to produce a signal. 
Notice that $T_{response}$ is given by 
\[
T_{response} =T_{receptor} + M  T_{reattachent}
\]
where $T_{reattachent}$ is the time that a ligand takes to attach to a receptor after having detached. Therefore, $T_{receptor}= T_{response} - M  T_{reattachent} $ is the time that a ligand spends in the network, i.e.~the time that a ligand spends attached to the receptor, in order to produce a signal leading either to response or to non-response. 
Notice that $T_{receptor}$ is a random variable. 
In Section \ref{sec:time}, we compute the probability distribution, conditional to having a response, of $T_{receptor}$ as $N \to \infty $ and when $M \approx e^{b N } $ as $ N \to \infty $. 
It turns out that, if $ \left( 1- \frac{\tau}{\tau_c }  \right)  \gg \frac{1}{N} $ as $N \to \infty$ (i.e.~when we do not expect a response), then 
\begin{equation} \label{Tuniform}
T_{receptor} \sim \mathcal U (0, \tau M) \text{ as } N \to \infty 
\end{equation}
where with $\mathcal U $ we denote the uniform probability distribution on the interval $[0, \tau M ]$. 
If instead, $   \frac{\tau}{\tau_c } -1 = \frac{\xi }{N}  $ for $\xi >0 $ (i.e.~when we expect a response), then 
\begin{equation} \label{Tdelta}
T_{receptor} \sim \text{Exp} \left( \frac{e^\xi }{\tau M }\right)  \text{ as } N \to \infty, 
\end{equation} 
where $Exp(\lambda) $ denotes the probability distribution of an exponentially distributed random variable with density $t \mapsto \lambda e^{- \lambda t } $ for $t >0$. 

Therefore for large values of $N $ (and for $M \approx e^{b N } $)  we have that 
\[ \mathbb E[T_{receptor}] = \tau M  e^{- \xi }
\] where $\xi = N \left( \frac{\tau}{\tau_c}-1 \right) $. Instead when 
 $(\frac{\tau}{\tau_c} -1 ) \gg  \frac{1}{N} $ as $N\to \infty $, we have that 
 \[ \mathbb E[T_{receptor}] = \frac{\tau M } {2} .
 \] 
Therefore, on average, ligands with larger unbinding times with the receptor are faster in yielding a response of the immune system.

\bigskip 

\textbf{Energy consumption}

\bigskip 
 
A relevant quantity that we compute (see Section \ref{sec:energy}) is the total energy consumed by the kinetic proofreading mechanisms. 
In particular we compute the number of molecules of ATP that are consumed during $M$ trials of a KPR network of $N$ steps. 
The number of molecules of ATP that are consumed is a random variable $T_N $ which is described by different probability distributions for different behaviours of $M$ as $N \to \infty $. 
In particular, when $M=1 $ it turns out that the probability of spending $ k $ ATP molecules is described by a mixed random variable, i.e.~a random variable with a probability distribution that has a continuous and a discrete part. We refer to \eqref{eq:mixed density M=1} for the expression of the probability distribution. 
We find a similar situation when $M \approx 1 $ as $N \to \infty $ and $M > 1 $ (see \eqref{eq:mixed density M >1}).
Instead, when $M \to \infty $ as $N \to \infty $ we obtain (see \eqref{normal approx energy}) that 
\begin{equation} \label{Atp consumed}
 \mathcal E_N \sim \mathcal N \left( \tau M , \sqrt{\tau (1+\tau) M}  \right) \text{ as } N \to \infty
\end{equation}
where we denote with $\mathcal N (m , \sigma ) $ the probability distribution function of a normally distributed random variable with mean $m $ and variance $\sigma^2$. 

Our analysis of the asymptotic behaviour of the probability of producing $k$ molecules of ATP, allows to compute the average number of molecules of ATP that are consumed during a kinetic proofreading process for different values of the number of trials $M$. 
Since the energy consumption is a quantity that can be measured experimentally, comparing the average quantity of energy consumed, computed experimentally, with the different possible average values that we obtain from our analysis can give an estimate of the value of $M $.

\bigskip 

\textbf{Fluctuations of the fluxes}

\bigskip 

Since we are assuming that a ligand can bind to the receptor $M \geq 1 $ times, it is also possible to compute the probability $q_k$ of having $ 1 \leq k \leq M$ responses after $M $ trials of $N$ steps. Since we are interested in computing the number of responses that a ligand can produce in $M $ trials it is relevant to study the asymptotics for the probability densities $q_k$ as $M \rightarrow \infty $. 
This allows us to compute the total flux of product $J(\tau) = k/M$ that a reservoir of ligands yield. Notice that, since the number of responses $k$ is a random variable, also the flux $J(\tau) $ is a random variable. 

It has been argued in \cite{kirby2023proofreading} that the fluctuations of these fluxes do not allow to discriminate between different values of $\tau $.  On the contrary, it is claimed in \cite{xiao2024leisure} that, when we assume that the time needed for the KPR network to produce a response is large enough, the fluctuations of the fluxes become negligible. 
In this paper, we formulate precise conditions on the unbinding time $\tau $ and on the number of trials $M $ that allow to discriminate between ligands with different unbinding times using the information about the fluxes of product induced by the ligands.

Moreover, we infer that,
when  $p(\tau_1) M \to \infty $ and $ p(\tau_2) M  \to \infty $ as $M \to \infty $, and when the unbinding times $\tau_2 > \tau_1 $ are such that $\tau_1 ,  \tau_2 \approx 1 $ and satisfy 
 \begin{equation} \label{overlap}
\frac{\tau_2 - \tau_1 }{\tau_2 } \gtrsim \frac{1+\tau_1 }{N } \frac{1}{\sqrt{M p (  \tau_2) }}
\end{equation}
as $M \to \infty $ and $N \to \infty $, then the KPR system satisfies the specificity property, i.e., if $\tau_1 - \tau_2 $ satisfies \eqref{overlap}, then it is possible to determine $\tau_1 $ and $\tau_2 $ from the values of the fluxes $J(\tau_1) $ and $J(\tau_2) $. 
Therefore, \eqref{overlap} can be seen as the analogous of \eqref{uncertanty principle} for the fluxes of products.

\section{Probabilistic kinetic proofreading model}
\label{sec:model}

In this section, we describe the stochastic model of kinetic proofreading that we study in this paper. 
The main feature of this model is that we assume that, if a ligand detaches from the complex ligand-receptor, then it can attach another time to the receptor. We assume that this reattachment can take place $M \geq 1 $ for each ligand.

The main quantity that we want to study is the probability that a set of $L \geq 1 $ ligands yield a response.
We assume that the iterations of the kinetic proofreading chain are mutually independent. Therefore if $p (\tau ) $ is the probability that one ligand yields a response after one kinetic proofreading trial, then the probability that one ligand triggers the response of the immune system after $M $ trials is just 
\begin{equation}\label{p(tau)_M}
p_{M}(\tau)= 1- (1- p (\tau))^M. 
\end{equation}
Similarly, since we assume that the kinetic proofreading trials of the ligands are mutually independent random variables, if we have a set of $L \geq 1$ ligands, where each of the ligands can reattach $M \geq 1 $ times to the receptor, then the probability of this set of ligands to trigger a response is given by 
\[
p_{ML} (\tau)=1- (1-p(\tau))^{ML}. 
\]
Therefore in order to analyse our model, we need to compute $p(\tau) $, the probability  that a ligand yields a response in one kinetic proofreading trial. 
Elementary probabilistic arguments imply that 
\begin{equation} \label{p(tau)}
p(\tau)= \left( \frac{\tau }{ 1+\tau } \right)^N.
\end{equation}
To obtain \eqref{p(tau)} we explain in detail the reactions taking place during a kinetic proofreading trial. 

\subsection{Probability of response after one kinetic proofreading trial of $N $ steps}
We assume first that the number of ligands is equal to one ($L=1$) and we describe the chain of reactions that take place during one kinetic proofreading trial. 
When a ligand binds to the receptor, the complex ligand-receptor undergoes a sequence of kinetic proofreading steps. 
We assume that the set of the possible states at which this complex can be is $\{ 1, \dots,  N\} $. In other words the number of kinetic proofreading steps is $N$.  

We assume that during the kinetic proofreading process only two types of reactions take place. 
One of the reactions is phosphorylation, i.e.~the reaction 
\[ 
(k) \rightarrow (k+1) 
\] 
for $k=1, \dots, N-1 $. 
We assume that phosphorylation takes place at a rate $\varphi$, that does not depend on the state $k$. 

The other reaction that can take place is the detachment 
of the ligand from the complex ligand-receptor at each phosphorylation state $k $ for $k = 1, \dots, N $, i.e. 
\[ 
(k) \rightarrow \emptyset. 
\]
This reaction takes place at rate $1/\overline \tau$. 
In particular, $\overline \tau  $ is independent on the number of phosphates linked to the complex, i.e.~on the phosphorylation state $k$ of the complex ligand-receptor. 
The unbinding time $\overline \tau$ is different for different types of ligands. 
As indicated in the introduction, we assume that the reverse reaction, i.e.~the attachment of a ligand to a phosphorylated state with $k \geq 2 $ cannot take place.

Finally we assume that the ligand that is released during the reaction $(k) \rightarrow \emptyset$ can reattach to a receptor and, hence, can start another kinetic proofreading process. 
Each ligand can attach to the receptor $M $ times, where the number of trials $M$ is assumed to depend on the number of phosphorylation events that take place during kinetic proofreading. 
Finally, we assume that, when the system reaches the state $N$ it produces the product/response $R$ at rate $\varphi$.

\begin{center}
\begin{tikzcd}
&   & &  \text{ unbound receptor}    &   &   \\
& C_1 \arrow[urr,  "1/ \overline{\tau}"] \arrow[r, swap, "\varphi"] &  C_2  \arrow[ur,  swap, "1/ \overline{\tau}"] \arrow[r, swap, "\varphi"]  & \ldots  \arrow[r, swap, "\varphi"]  & C_{N-1} \arrow[ul,  "1/ \overline{\tau}"]  \arrow[r, swap, "\varphi"]  & C_N \arrow[ull, swap , "1/ \overline{\tau}" ] \arrow[r, swap, "\varphi"] &  \text{response}
\end{tikzcd}
\end{center}

Let $n_k(\overline t)$ be the probability that a ligand reaches the phosphorylation state $k$ in the time interval $(0,\overline t)$ and  $R(\overline t)$ be the probability of a response in the time interval $(0, \overline t)$. 
Then,
\begin{align} \label{ODE kpr overline t}
   \frac{d n_1 }{ d \overline t } =&  - \left( \varphi + \frac{1}{ \overline \tau} \right) n_1, \nonumber \\
    \frac{d n_k }{ d \overline t } =& \varphi n_{k-1}  - \left( \varphi + \frac{1}{ \overline \tau} \right) n_k, \quad k = 2 \dots N \\
    \frac{d R }{ d \overline t } =& \varphi n_N  \nonumber 
\end{align}
with initial condition $n(0) = e_1 \in \mathbb R^{N}$, where $e_1=(1, 0, \dots, 0)$ and $R(0)=0$.

In other words, a complex at state $k$ can either disappear from the system due to the detachment of the ligand from the receptor, or it disappears due to a phosphorylation  event. Hence we have a "death" rate equal to $ \varphi + \frac{1}{ \overline \tau }$ for each complex with state $k \in \{ 1, \dots,  N \} $. Finally, when a complex at state $k$ undergoes a phosphorylation reaction a complex with state $k+1 $ is formed. Hence we have a "birth" rate equal to $\varphi $ of complexes at state $k \in \{ 2, \dots  , N\} $. Finally, the complex at state $N $ produces a response at rate $\varphi $. The choice of the initial condition $n(0) = e_1 $ accounts for the fact that we are assuming that the ligands attach only to a dephosphorylated receptor.

Changing the units of time from $\overline t$ to $ t= \varphi \overline t $, i.e.~taking the phosphorylation time as unit of time, and rescaling also the detachment time $\tau = \overline \tau \varphi $ we obtain that
\begin{align} \label{ODE kpr}
   \frac{d n_1 }{ dt } =&  - \frac{1+\tau}{\tau} n_1, \nonumber \\
    \frac{d n_k }{ dt } =& n_{k-1}  - \frac{1+\tau}{\tau} n_k, \quad k = 2,  \dots, N \\
    \frac{d R }{ dt } =&  n_N  \nonumber 
\end{align}
with initial condition $n(0) = e_1 \in \mathbb R^{N}$ and $R(0)=0$.

Since the kinetic proofreading mechanism that we introduced above can have only two possible outcomes (response/non-response), we introduce a random variable $X$, with a Bernoulli distribution, taking only two possible values, $1$ and $0$, hence 
\[
\mathbb P (X=1 )= p(\tau) =  1 - \mathbb P(X=0).  
\]
In particular $\mathbb P (X=1) $ is the probability of having a response after one kinetic proofreading trial and $1- \mathbb P (X=1)$ is the probability of non-response. 
Therefore 
\begin{equation} \label{p as a function of R}
p (\tau) = \int_0^\infty n_N(t) dt . 
\end{equation} 
To compute the probability of response $p (\tau)$, we compute the solution of the system of ODEs \eqref{ODE kpr}. 

\begin{lemma} \label{lem:solution ODE} 
Let $\tau >0 $ and $N \geq 1 $ and assume that $n(0)=e_1 \in \mathbb R^N$ and $R(0)=0$. 
The solution of \eqref{ODE kpr} is $(n(t), R(t))=((n_1(t), \dots n_N(t)), R(t)) $ where 
\[ 
n_1(t)=e^{- t \left( \frac{1 + \tau }{  \tau} \right) }, \quad t >0 
\] 
while 
\begin{equation} \label{n_i} 
n_k(t)=   \frac{t^{k-1}}{(k-1)!}  e^{- t \left(  \frac{1 + \tau}{\tau} \right) }, \quad k= 2, \dots N,  \quad t >0
\end{equation}
and
\begin{equation}\label{R}
R(t) = \int_0^t n_N(s) ds \quad t >0. 
\end{equation}
\end{lemma}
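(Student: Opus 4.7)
The plan is to solve the triangular linear ODE system \eqref{ODE kpr} directly, exploiting the fact that each equation for $n_k$ depends only on $n_{k-1}$ and on $n_k$ itself. I would proceed by induction on $k$, and then the formula for $R$ is obtained by a single integration.

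First, I would handle the base case $k=1$. The equation $\frac{dn_1}{dt} = -\frac{1+\tau}{\tau}n_1$ with $n_1(0) = 1$ is an autonomous linear scalar ODE whose unique solution is $n_1(t) = e^{-t(1+\tau)/\tau}$, giving the claimed formula.

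Next, for the inductive step I would use the integrating factor $\mu(t) = e^{t(1+\tau)/\tau}$. Multiplying the equation for $n_k$ by $\mu(t)$ yields
\begin{equation*}
\frac{d}{dt}\bigl(\mu(t) n_k(t)\bigr) = \mu(t)\, n_{k-1}(t).
\end{equation*}
Assuming inductively that $n_{k-1}(t) = \frac{t^{k-2}}{(k-2)!} e^{-t(1+\tau)/\tau}$ (with the convention $0!=1$ handling the $k=2$ case directly from the explicit $n_1$), the right-hand side becomes $\frac{t^{k-2}}{(k-2)!}$. Integrating from $0$ to $t$ and using the initial condition $n_k(0)=0$ for $k\geq 2$ gives $\mu(t) n_k(t) = \frac{t^{k-1}}{(k-1)!}$, i.e.\ $n_k(t) = \frac{t^{k-1}}{(k-1)!} e^{-t(1+\tau)/\tau}$, completing the induction and establishing \eqref{n_i}.

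Finally, the equation $\frac{dR}{dt} = n_N$ with $R(0)=0$ integrates trivially to $R(t) = \int_0^t n_N(s)\,ds$, proving \eqref{R}. There is no serious obstacle here: the system is lower-triangular with constant coefficients on the diagonal, so the integrating factor unravels it step by step. The only small subtlety is ensuring the induction is anchored correctly at $k=2$ (where $n_1$ is obtained directly rather than from the inductive hypothesis), which is handled by noting that the formula \eqref{n_i} with $k=1$ agrees with the explicit expression for $n_1$ obtained in the base case.
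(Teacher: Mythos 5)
Your proof is correct and follows essentially the same route as the paper: an induction on $k$ in which the integrating factor $e^{t(1+\tau)/\tau}$ (equivalently, the variation-of-constants formula the paper writes as $n_{k+1}(t)=\int_0^t n_k(s)e^{(s-t)(1+\tau)/\tau}\,ds$) reduces each step to integrating $t^{k-2}/(k-2)!$, with $R$ obtained by a final direct integration. No gaps.
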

\begin{proof}
    Since  $ \frac{d n_1 }{ dt } =  - \left( \frac{1+\tau}{ \tau} \right) n_1$ and $n(0)=e_1$ we immediately deduce that $n_1(t)=e^{- t \left(  \frac{1+\tau}{  \tau} \right) }$. 
We conclude the proof by induction. 
Indeed 
\begin{align*}
    n_2(t)= \int_0^t n_1(s) e^{(s-t) (\frac{1+\tau }{ \tau })} ds =  te^{- t \left( \frac{1 + \tau }{ \tau} \right) }. 
\end{align*}
Hence \eqref{n_i} holds for $k=2$. 
Assume now that for some $ k \in \{ 2, \dots, N \}  $ it holds that $n_k$ is given by \eqref{n_i}. Then $n_{k+1} (t) $ is given also by \eqref{n_i}. Indeed
\begin{align}
      n_{k+1}(t)=  \int_0^t n_k(s) e^{(s-t) ( \frac{1+\tau }{\tau })} ds =\frac{1}{(k-1)!} e^{- t \left( \frac{1+\tau }{ \tau} \right) } \int_0^t s^{k-1} ds=\frac{1}{k!} t^k e^{- t \left(  \frac{1+\tau }{ \tau} \right) }. 
\end{align}
      As a consequence \eqref{n_i} holds for every $k \in \{2, \dots, N\} $. 
      Finally \eqref{R} follows just by the fact that $\frac{d}{dt} R = n_N$. 
\end{proof}

As a consequence of Lemma \ref{lem:solution ODE} we can compute the probability $p(\tau)$ of having a response after one kinetic proofreading trial. 

\begin{proposition}
Assume $\tau >0$ and $N \geq 1$. 
Let $(n_1(t) , n_2(t)  , \dots, n_N(t) , R(t) )$ be the solution of \eqref{ODE kpr}. 
Then the function $p $ defined as \eqref{p as a function of R} is given by \eqref{p(tau)}. 
\end{proposition}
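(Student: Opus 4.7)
The plan is to substitute the explicit formula for $n_N$ from Lemma \ref{lem:solution ODE} into the definition \eqref{p as a function of R} and recognize the resulting integral as a standard Gamma-function integral.

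More concretely, by Lemma \ref{lem:solution ODE} with $k=N$, we have
\[
n_N(t) = \frac{t^{N-1}}{(N-1)!}\,e^{-t(1+\tau)/\tau}, \qquad t>0,
\]
so that \eqref{p as a function of R} gives
\[
p(\tau) = \int_0^\infty n_N(t)\,dt = \frac{1}{(N-1)!}\int_0^\infty t^{N-1} e^{-\lambda t}\,dt, \qquad \lambda := \frac{1+\tau}{\tau}.
\]
Using the well-known identity $\int_0^\infty t^{N-1} e^{-\lambda t}\,dt = (N-1)!/\lambda^N$ (which follows by the substitution $s=\lambda t$ or equivalently from the definition of the Gamma function $\Gamma(N) = (N-1)!$), one obtains
\[
p(\tau) = \frac{1}{\lambda^N} = \left(\frac{\tau}{1+\tau}\right)^N,
\]
which is exactly \eqref{p(tau)}.

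There is no genuine obstacle here: the work has already been done in Lemma \ref{lem:solution ODE}, and the remaining step is the evaluation of a Gamma integral. The only thing worth double-checking is that $\lambda>0$ (which holds since $\tau>0$) so that the integral converges and the formula for the moment of an exponential distribution applies. One could alternatively give a purely probabilistic derivation: conditional on the complex being at state $k$, the next event is a phosphorylation (probability $\tau/(1+\tau)$) or a detachment (probability $1/(1+\tau)$), and by the independence of successive competing-exponentials steps the probability of surviving all $N$ phosphorylations without detaching is $(\tau/(1+\tau))^N$; this interpretation is pleasant but unnecessary given that Lemma \ref{lem:solution ODE} already supplies the densities $n_k(t)$ explicitly.
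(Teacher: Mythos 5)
Your proof is correct and follows essentially the same route as the paper: both substitute the explicit formula for $n_N$ from Lemma \ref{lem:solution ODE} into \eqref{p as a function of R} and evaluate the integral, the only cosmetic difference being that the paper phrases the Gamma integral $\int_0^\infty t^{N-1}e^{-\lambda t}\,dt = (N-1)!/\lambda^N$ as the normalization of an Erlang density. Nothing is missing.
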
 
\begin{proof}
Notice that 
        \begin{equation} \label{n_N erlang}
    n_N(t)=\left( \frac{\tau } {1+ \tau } \right)^N E\left(t; N , \frac{1+\tau }{ \tau }\right). 
    \end{equation}
    where $E\left(t; N , \frac{1+\tau }{ \tau }\right)$ is the Erlang distribution (see \cite{RE}) of parameters $N$ and $  \frac{1+\tau}{ \tau }$, where
    \begin{equation} \label{erlang}
    E( t;  n, \theta)= \frac{  \theta^n t^{n-1}e^{- \theta n }} {(n-1) ! } \quad x , \theta  \geq 0. 
    \end{equation}
    Therefore 
 \begin{align*}
  p(\tau) =   \int_0^\infty n_N(t) dt =  \left( \frac{\tau }{ 1+\tau } \right)^N. 
 \end{align*}   
\end{proof}

\section{Specificity} \label{sec:specificity}
In the previous section we computed the probability $p(\tau)$ that a ligand, with unbinding time $\tau$, produces a response after one kinetic proofreading trial, $M=1$.
This probability depends on the length of the kinetic proofreading chain, i.e.~on the number $N$ of kinetic proofreading steps, and is given by \eqref{p(tau)}. 
 We refer to Section \ref{sec:delay} for the analysis of the asymptotics of $p(\tau)$ as $N \to \infty$ and for a comparison with a model with a delay. 
 
In this section, we study the probability that a ligand ($L=1$) produces a response $R$, in $M > 1$ kinetic proofreading trials. 
As anticipated in Section \ref{sec:model}, since each trial is assumed to be independent on the other, the probability that one ligand does not trigger response after $M$ kinetic proofreading trials is 
$(1-p(\tau) )^{ M}$ and therefore, since only two possible outcomes (response/non-response) are possible, the probability of response is given by \eqref{p(tau)_M}

We study the asymptotic behaviour of $p_M(\tau)$ as $N $ tends to infinity. 
Notice that in this section we are assuming that the number of ligands is equal to $1$. 
In Section \ref{sec:sensitivity} we study how the results derived in this section change if the number of ligands change, hence we will study the sensitivity of the model. 

We prove that, when the number of kinetic proofreading steps $N$ is large and the number of trials $ M$ is also sufficiently large, there exists a threshold value, $\tau_c$, depending on $M $ and on $N$, such that we obtain a sharp transition from non-response to response for unbinding times in a neighborhood of $\tau_c$. 
The critical unbinding time, for a fixed $N $ and $M$,  is defined as follows. 
\begin{definition} \label{def:critimal time}
 Let $M >1 $, $N > 1 $ and let $p(\tau ) $ be given by \eqref{p(tau)}. 
 We define the critical time $\tau_c= \tau_c(M) $ as the solution of    
 \[ 
 M p(\tau_c)=1. 
\] 
\end{definition}
We remark here that the number of responses that a ligand yields in $M $ kinetic proofreading trials is distributed according to a Binomial distribution with mean $M p (\tau ) $. 
Therefore, since the critical unbinding time $\tau _c $ is defined as the solution of $ M p (\tau_c  )=1$, then $\tau_c $ is the unbinding time characterizing a ligand that produces, in average, one response in $M $ trials. 
Notice that, as it could be expected, as $M$ increases the critical value $\tau_c $ decreases. 
Finally notice that the case $M=1 $ is not included in Definition \ref{def:critimal time}. However when $M=1 $ is natural to consider $\tau_c \sim N $ as $N \to \infty $ (see Section \ref{sec: prob response M=1} for the details). 

As a consequence of \eqref{p(tau)} we deduce that $\tau_c= \tau_c(M) $ is given by 
\begin{equation} \label{tauc exp}
\tau_c=  \frac{1}{{M  }^{\frac{1}{N}}-1}=\frac{1} {\exp\left(\frac{ \log(M )}{N}\right)-1}. 
\end{equation}

We will prove that the probability that a ligand, with unbinding time $\tau$, triggers a response changes abruptly from zero (non-response) to one (response) when $\tau $ is close to $\tau_c $, more precisely 
\begin{equation} \label{transition regime}
\left| \frac{\tau}{\tau_c} -1 \right| \approx \frac{1+\tau_c}{N}. 
\end{equation}
We refer to Theorem \ref{thm: M >>1} and Corollary \ref{cor:different thicknesses} for a more precise statement. 

It is relevant to write \eqref{transition regime} in terms of the binding energies. Indeed let $\tau = e^{E} $ and $\tau_c = e^{E_c}$, then the transition between response and non-response takes place in the region 
\[
\frac{1+\tau_c}{N}\approx \left| \frac{\tau}{\tau_c} -1 \right| = \left| e^{E- E_c  } -1 \right| \approx  \left| E- E_c \right|. 
\] 
In other words, the network is able to discriminate between a ligand that leads to response from a ligand that does not lead to response only if the difference between the binding energy of the ligand with the receptor, $E$, and the critical binding energy $E_c$ (corresponding to the critical unbinding time $\tau_c$) is such that \eqref{uncertanty principle} holds.

\bigskip 

\textbf{Probability of response for the KPR model when $ M \approx e^{bN} $ as $N \to \infty$}

\bigskip

We explain here the mean features of the asymptotic behaviour of the probability of response $p_M $ when $M \approx e^{bN} $ as $N \to \infty $, without entering into the details of the proofs, that will be written in detail later in Section \ref{sec:spec M infty}.
Indeed, our aim here is only to illustrate in a heuristic manner that the detailed asymptotic behavior of $p_{M}(\tau)$ when $\lim_{N \to \infty } \log(M) =b>0$, yields results on the specificity of the model.
Most importantly we show that the assumption $M\approx e^{bN } $ as $N \to \infty$ yield a very high specificity.
It is worth to give some numerical values to illustrate that the order of magnitude of the numbers involved is not unreasonable, in spite of the exponential dependence of $M $ on $N$. 
If $N \approx 10 $ and $b =1/2 $, then $M \approx e^{5} \approx 10^2$, hence a ligand should attach approximately $100$ times to the receptor in order to produce a response. 
If, instead, $b = 1 $ then $M \approx e^{10} \approx 10^5 $.

First of all, when $ \lim_{N \to \infty} \frac{\log(M)}{N} =b $, then the critical time $\tau_c $ is of order one, as we will prove in Proposition \ref{lem:asympt tau_c LM less than expo}. In particular 
\[
\tau_c \sim \frac{1}{e^b -1} \text{ as } N \to \infty. 
\]
We can therefore find easily the asymptotic behaviour of $p_M(\tau) $ when $\tau \approx 1 $ as $N \to \infty$. Indeed, in this case we have that since $\tau \approx 1 $, then $\lim_{N \to \infty} p(\tau)= \lim_{N \to \infty } \left( \frac{\tau}{1+\tau }\right)^N =0$. Hence using the Taylor expansion of the logarithm as well as the definition of $\tau_c$, see \eqref{tauc exp}, we deduce that
\begin{equation} \label{p_M expo}
p_M(\tau)=1- (1- p(\tau))^M = 1- e^{M  \log(1-p(\tau) )} \sim 1-e^{-M p(\tau) } = 1-e^{- \left(\frac{1+\tau_c}{\tau_c}  \frac{\tau}{1+\tau} \right)^N} \text{ as } N \to \infty.
\end{equation}
Notice that taking $\tau= \tau_c (1+\frac{x}{N}) $ for $x \in \mathbb R$ we deduce that
\begin{equation} \label{eq:expM special}
\left(\frac{1+\tau_c}{\tau_c}  \frac{\tau}{1+\tau} \right)^N \sim \left( 1 + \frac{x}{N (1+\tau_c ) }\right)^N  \sim e^{\frac{x}{1+\tau_c}} \text{ as } N \to \infty. 
\end{equation}
Using the change of variables $\xi = \frac{x }{1+\tau_c}$, we deduce by \eqref{p_M expo} and \eqref{eq:expM special} that
    \begin{equation} \label{eq:q M=ebN} 
    \end{equation} 
    for $\xi\in\mathbb R$.
We refer to Theorem \ref{thm: M >>1} for the detailed analysis of the behaviour of $p_M $ for different limits of $M$, that leads to \eqref{eq:q M=ebN} when $M \approx e^{b N} $ as $N \to \infty $.

As a consequence, due to the fact that when
\[ 
\frac{\tau}{\tau_c} -1 = \frac{\xi (1+\tau_c) }{N }, \quad \xi \in \mathbb R
\] 
the probability of response behaves like $ 1- \exp (-e^\xi ) $, we deduce that, the system has a very high specificity. 
Indeed, since $\tau_c \approx 1 $ the transition between response and non-response takes place in the region 
\begin{equation}\label{thickness exp} 
\left| \frac{\tau}{\tau_c } -1 \right| \approx \frac{ 1+\tau_c}{N} \approx \frac{1}{N} .
\end{equation}
Notice that the thickness of this region is small, i.e.~equal to $1/N$, and decreases as $N \to \infty $. 
In other words, if a ligand is characterized by the unbinding time $\tau $ with the receptor, then the system is able to discriminate between response and non-response if 
\[
\left|  \frac{\tau}{\tau_c} -1 \right| \gtrsim \frac{1}{N}.
\]

In the following sections we will study the specificity of the model under different assumptions on the behaviour of $M $ as $N \to \infty $.  In particular we prove that, as expected, the specificity of the system increases as $M $ increases. 
We start our analysis with the case $M \approx 1 $ in Section \ref{sec:spec M order 1} and then we study the different possible behaviour for the case $M \to \infty $ as $N \to \infty $ in Section \ref{sec:spec M infty}.

\subsection{Probability of response for the KPR model when $ M \approx 1 $ as $N \to \infty$} \label{sec:spec M order 1}
In this section, we study the behaviour of $p_M(\tau) $, the probability that a ligand yields a response after $M$ trials as $N \rightarrow \infty $, under the assumption that $M >1$ and that $ M $ does not dependent on $N $, hence is constant. 
To this end we firstly analyse the asymptotic behavior of the critical unbinding time $\tau_c $ as $N \rightarrow \infty $. In particular we show that $\tau_c \sim N $ as $N \to \infty$.
The case of $M = 1 $ is studied in Section \ref{sec: prob response M=1}.

\begin{proposition}[$M \approx 1$]\label{lem:asympt tau_c M order 1}
 Assume that $M$ is a constant function of $N$ and that $M>1$, then 
 \[
  \lim_{N \to \infty } \frac{ \tau_c \log(M)  }{N}  =1.
 \] 
 \end{proposition}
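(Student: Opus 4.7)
The plan is to use the closed-form expression for $\tau_c$ given in \eqref{tauc exp}, namely
\[
\tau_c = \frac{1}{\exp(\log(M)/N) - 1},
\]
and to Taylor-expand the exponential in the denominator. Since $M > 1$ is constant, the quantity $\log(M)/N$ is positive and tends to $0$ as $N \to \infty$, which is exactly the regime in which the expansion $e^y - 1 = y + O(y^2)$ is controlled.

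Concretely, I would first write
\[
\exp\!\Bigl(\tfrac{\log(M)}{N}\Bigr) - 1 = \tfrac{\log(M)}{N} + \tfrac{(\log(M))^2}{2 N^2} + O(N^{-3})
\]
as $N\to\infty$, where the implicit constant depends only on $M$. Dividing by $\log(M)/N$ (which is legal since $M>1$), I would then conclude
\[
\frac{\tau_c \log(M)}{N} = \frac{\log(M)/N}{\exp(\log(M)/N) - 1} = \frac{1}{1 + \frac{\log(M)}{2N} + O(N^{-2})} \longrightarrow 1
\]
as $N \to \infty$, which is the claim.

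There is essentially no obstacle here: the statement is a direct asymptotic consequence of $e^y \sim 1 + y$ for $y \to 0^+$ applied to $y = \log(M)/N$. The only point worth double-checking is that the formula \eqref{tauc exp} is used correctly and that $M>1$ guarantees $\log(M)>0$ so that $\tau_c$ is finite and positive for every $N$. No further ingredients from the paper are needed beyond the defining equation of $\tau_c$.
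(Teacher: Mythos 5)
Your proposal is correct and matches the paper's own (very terse) argument: the paper likewise just invokes the closed form \eqref{tauc exp} and the elementary asymptotic $e^{y}-1\sim y$ for $y=\log(M)/N\to 0^{+}$. Your write-up simply makes explicit the Taylor expansion that the paper leaves implicit.
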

 \begin{proof}
     Since  $  M$ is a constant, then \eqref{tauc exp} implies the result. 
 \end{proof}
We now study the behaviour of $p_M(\tau) $, the probability that a ligand triggers the immune  response after $M$ trials as $N \rightarrow \infty $, when $M$ is a constant larger than $1$. 
As anticipated in \eqref{uncertanty principle}, in this case, the minimal difference of energies that allows the network to discriminate between two types of ligands is of order $1$. 
\begin{proposition}[$M\approx 1 $] \label{prop: M approx1}
    Assume that $M>1$ is a constant function of $N$. 
    Then 
    \begin{equation} \label{eq:q M order 1}
    \lim_{N \to \infty} p_M(\xi \tau_c) = 1- \left( 1-e^{ - \frac{\log(M)}{\xi} } \right)^{M},  
    \end{equation}
    where $\tau_c$ is given by \eqref{tauc exp} and $\xi >0$.
\end{proposition}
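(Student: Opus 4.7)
The plan is to reduce the limit of $p_M(\xi\tau_c)=1-(1-p(\xi\tau_c))^M$ to the computation of $\lim_{N\to\infty} p(\xi\tau_c)$, and then apply a standard asymptotic expansion using that $\tau_c \sim N/\log M$ by Proposition \ref{lem:asympt tau_c M order 1}. Since $M$ is fixed, once we know the limit $\ell:=\lim_{N\to\infty}p(\xi\tau_c)\in[0,1]$ the continuous mapping theorem (applied to the polynomial $x\mapsto 1-(1-x)^M$) immediately gives $\lim p_M(\xi\tau_c)=1-(1-\ell)^M$, which matches \eqref{eq:q M order 1} provided $\ell=e^{-\log(M)/\xi}$.

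The key step is therefore the identification of $\ell$. First I would factor the ratio
\[
p(\xi\tau_c)=\left(\frac{\xi\tau_c}{1+\xi\tau_c}\right)^{\!N}=\left(\frac{\tau_c}{1+\tau_c}\right)^{\!N}\left(\frac{\xi(1+\tau_c)}{1+\xi\tau_c}\right)^{\!N}=\frac{1}{M}\left(1+\frac{\xi-1}{1+\xi\tau_c}\right)^{\!N},
\]
where I used the defining identity $p(\tau_c)=1/M$ coming from Definition \ref{def:critimal time}. Then I would take logarithms and Taylor expand: since $\tau_c\to\infty$, the perturbation $(\xi-1)/(1+\xi\tau_c)$ tends to $0$, so
\[
N\log\!\left(1+\frac{\xi-1}{1+\xi\tau_c}\right)=\frac{N(\xi-1)}{1+\xi\tau_c}+O\!\left(\frac{N}{\tau_c^{2}}\right).
\]
Using $\tau_c\sim N/\log(M)$ from Proposition \ref{lem:asympt tau_c M order 1}, the leading term tends to $(\xi-1)\log(M)/\xi$ while the error term is $O(1/N)\to 0$. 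Therefore $M\,p(\xi\tau_c)\to\exp((\xi-1)\log(M)/\xi)=M^{1-1/\xi}$, giving $\ell=M^{-1/\xi}=e^{-\log(M)/\xi}$.

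The only real obstacle is bookkeeping the two competing scales: $\tau_c$ itself diverges linearly in $N$, yet appears inside a power $N$, so one must be careful to retain the $1/(1+\xi\tau_c)$ factor long enough to see that it balances the $N$ prefactor and yields the finite limit $(\xi-1)\log(M)/\xi$ rather than simply $0$. Once the Taylor expansion is written with its remainder, everything else is automatic: substitute back into $p_M$, use continuity of the $M$-th power (valid because $M$ is a finite constant), and obtain \eqref{eq:q M order 1}.
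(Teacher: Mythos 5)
Your proposal is correct and follows essentially the same route as the paper: both arguments reduce to showing $p(\xi\tau_c)\to e^{-\log(M)/\xi}$ by a logarithmic/Taylor expansion that exploits $\tau_c\sim N/\log(M)$, and then pass the limit through the fixed $M$-th power. Your exact factorization through $M p(\tau_c)=1$ is only a cosmetic reorganization of the paper's direct expansion of $\left(\tfrac{\xi\tau_c}{1+\xi\tau_c}\right)^{N}\sim\exp\!\left(-\tfrac{N}{1+\xi\tau_c}\right)$, and your bookkeeping of the remainder term is sound.
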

\begin{proof}
In this case we have that
\begin{align} \label{comp q M order 1}
q(\tau) :=1-p_M(\xi \tau_c) = (1-p(\xi \tau_c) )^{M} 
= \left( 1- \left(\frac{\xi \tau_c}{1 + \xi \tau_c} \right)^N \right)^{M}. 
\end{align}
We deduce that 
\begin{align} \label{tau expansion M order 1}
 \left(\frac{\tau_c \xi   }{1 + \tau_c \xi } \right)^N=  \left(\frac{1}{1 + \frac{1}{\tau_c \xi} } \right)^N = \exp \left( N \log \left(\frac{1}{1 + \frac{1}{\tau_c \xi} } \right)\right) \sim  \exp \left( -  \frac{N}{1 + \tau_c \xi } \right) \text{ as } N \rightarrow \infty. \nonumber
\end{align} 
Notice that in the above computation we used the fact that $\tau_c \rightarrow \infty$ as $N \rightarrow \infty $. 
Equality \eqref{comp q M order 1} implies
\begin{equation} \label{eq q final M<<N}
1-p_M(\xi \tau_c) = q( \xi \tau_c ) \sim \left(  1 - \exp \left( -  \frac{N}{1 + \tau_c \xi  } \right) \right)^{M} \text{ as } N \rightarrow \infty. 
\end{equation}
Using the fact that $\tau_c \sim \frac{N}{\log(M)} $ as $N \rightarrow \infty$ we obtain \eqref{eq:q M order 1}. 
\end{proof}

We notice that \eqref{eq:q M order 1} implies that the transition from non-response to response takes place in a wider region compared to the case $M \approx e^{bN }$ as $N \to \infty $. 
Moreover in this case the rescaling that we perform is of the form $\tau= \tau_c \xi $ with $\xi >0$. Therefore, since $\tau_c \sim N $ as $N \to \infty $, in this case the transition from non-response to response takes places in a region of order $1$, namely when $\tau $ is such that 
\[
\left| \frac{\tau}{\tau_c }- 1 \right| \approx 1 .  
\]
In other words the system has the specificity property when the binding energy $E $ of the ligand is such that 
\[
\left|E-E_c \right| \gtrsim 1 .  
\]

\subsection{Probability of response for the KPR model when $M \gg 1 $ as $N \to \infty$} \label{sec:spec M infty}
In this section we analyse the specificity of the model when the number of trials $M$ tends to infinity as the number of kinetic proofreading steps $N$ tends to infinity. 
To this end we firstly analyse the asymptotic behavior of the critical unbinding time $\tau_c $ as $N \rightarrow \infty $.
In particular, we show that we have three possible different behaviours of $\tau_c $ as $N \to \infty$ depending on whether $\log(M) \ll N $, $\log(M) \approx N $, $\log(M) \gg N $ as $N \rightarrow \infty$.

 \begin{proposition}[$M  \gg 1$]
 \label{lem:asympt tau_c M less than expo}
  Assume that $\lim_{N \to  \infty } M=  \infty $.  
\begin{enumerate}[(1)]
\item If $ \lim_{N \to \infty} \frac{\log(M)}{N }= 0  $, then $ \lim_{N \to \infty}  \frac{  \tau_c\log(M)}{N }=1 $. 
\item If $ \lim_{N \to \infty} \frac{\log(M)}{N}= b >0  $, then $ \lim_{N \to \infty} \tau_c =\frac{1}{e^b-1}$. 
\item If $ \lim_{N \to \infty} \frac{\log(M)}{N}=\infty$, then $ \lim_{N \to \infty} \tau_c M^{1/N} =1 $. 
    \end{enumerate}
    \end{proposition}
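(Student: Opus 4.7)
The plan is to deduce all three statements from the closed-form expression \eqref{tauc exp}, namely
\[
\tau_c = \frac{1}{\exp(\alpha_N) - 1}, \qquad \alpha_N := \frac{\log(M)}{N},
\]
by analysing the denominator $\exp(\alpha_N) - 1$ separately in the three regimes of $\alpha_N$. Since $\tau_c$ depends on $M$ and $N$ only through $\alpha_N$, each case reduces to a one-variable elementary limit.

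For case (1), where $\alpha_N \to 0$, I would use the Taylor expansion $e^{\alpha_N} - 1 = \alpha_N + O(\alpha_N^2)$ to obtain $\tau_c \sim 1/\alpha_N = N/\log(M)$, which gives $\tau_c \log(M)/N \to 1$. For case (2), where $\alpha_N \to b > 0$, the continuity of the exponential immediately yields $\tau_c \to 1/(e^b - 1)$; no further work is needed. For case (3), where $\alpha_N \to \infty$, I would factor $e^{\alpha_N} - 1 = e^{\alpha_N}(1 - e^{-\alpha_N})$ and use $e^{-\alpha_N} \to 0$ to conclude $\tau_c \sim e^{-\alpha_N} = M^{-1/N}$, i.e. $\tau_c M^{1/N} \to 1$.

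There is no real obstacle: the only point that requires a moment of care is that in case (1) one must verify that $\alpha_N \to 0$ is consistent with $M \to \infty$ (for instance $M = N$ gives $\alpha_N = \log(N)/N \to 0$), so the hypothesis of the proposition is not vacuous; and that in each case the estimate holds uniformly in the subsequences compatible with the stated limit. Since $M = M(N)$ is treated as a deterministic sequence, these technicalities collapse to the three standard limits above, and the proof is essentially a direct computation from \eqref{tauc exp}.
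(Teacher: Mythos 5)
Your proposal is correct and follows essentially the same route as the paper, which simply observes that all three limits follow from the closed-form expression \eqref{tauc exp}; your case-by-case analysis of $\exp(\log(M)/N)-1$ just spells out the elementary details the paper leaves implicit.
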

    \begin{proof}
      The result follows by \eqref{tauc exp}. 
    \end{proof}
We now study the probability of response as the number of trials tends to infinity. 
As anticipated in \eqref{uncertanty principle}, in this case, the minimal difference of energies that allows the network to discriminate between two types of ligands depends on the asymptotics of $\tau_c$.
More precisely, it is of order $1/ \log(M)$ when $\log(M) \ll N $ as $N \to \infty $. Instead, it is of order $1/ N$ when $\log(M) \gg N $ and $\log(M) \approx N$ as $N \to\infty$. 

\begin{theorem}[$ M \gg 1 $] \label{thm: M >>1}
        Assume that $\lim_{N \to  \infty } M  =  \infty  $. 
    Then the function $p_M $ defined in \eqref{p(tau)_M} satisfies
    \begin{equation} \label{eq:q M >> 1}  
\lim_{N \to \infty}   p_M \left( \tau_c + \frac{\tau_c \xi (1+\tau_c)}{N} \right)= 1- \exp \left( - e^{  \xi}   \right), 
    \end{equation} 
        for $\xi\in\mathbb R$.
\end{theorem}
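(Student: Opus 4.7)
The plan is to reduce the statement to two asymptotic observations: first, that under the scaling $\tau = \tau_c + \tau_c \xi(1+\tau_c)/N$ one has $M p(\tau) \to e^{\xi}$; and second, that the standard approximation $(1-p(\tau))^{M} \sim \exp(-Mp(\tau))$ is valid in the regime $M\to\infty$. Together these give
$$p_{M}(\tau) = 1 - \exp\!\bigl(M \log(1-p(\tau))\bigr) \longrightarrow 1 - e^{-e^{\xi}},$$
which is exactly \eqref{eq:q M >> 1}.

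First I would carry out the ratio computation. Using the explicit formula \eqref{p(tau)} for $p(\tau)$ and the defining relation $Mp(\tau_c) = 1$ from Definition \ref{def:critimal time},
$$M p(\tau) = \frac{p(\tau)}{p(\tau_c)} = \left( \frac{\tau\,(1+\tau_c)}{\tau_c\,(1+\tau)} \right)^{N}.$$
Substituting the scaling yields $\tau/\tau_c = 1 + \xi(1+\tau_c)/N$ and, after factoring, $(1+\tau)/(1+\tau_c) = 1 + \tau_c\xi/N$, so that
$$\frac{\tau\,(1+\tau_c)}{\tau_c\,(1+\tau)} = 1 + \frac{\xi/N}{1+\tau_c\,\xi/N}.$$
Here the crucial input is Proposition \ref{lem:asympt tau_c M less than expo}, which implies that $\tau_c = o(N)$ in all three subcases: when $\log(M)\ll N$ one has $\tau_c \sim N/\log(M)$ with $\log(M)\to\infty$, when $\log(M)\approx N$ the quantity $\tau_c$ is bounded, and when $\log(M)\gg N$ even $\tau_c\to 0$. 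Consequently $\tau_c\xi/N \to 0$, the bracketed quantity above equals $1+O(1/N)$, and
$$N \log\!\left(1 + \frac{\xi/N}{1+\tau_c\xi/N}\right) = \frac{\xi}{1+\tau_c\xi/N} + O\!\left(\frac{1}{N}\right) \longrightarrow \xi.$$
This gives $Mp(\tau) \to e^{\xi}$.

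It remains to justify replacing $M\log(1-p(\tau))$ by $-Mp(\tau)$. Since $Mp(\tau)$ has the finite limit $e^{\xi}$ and $M\to\infty$, we must have $p(\tau) \to 0$. The expansion $\log(1-p(\tau)) = -p(\tau) + O(p(\tau)^{2})$ then gives
$$M \log(1-p(\tau)) = -M p(\tau) + O\!\bigl(M p(\tau)^{2}\bigr) = -M p(\tau) + o(1),$$
because $Mp(\tau)^{2} = (Mp(\tau))\cdot p(\tau) \to e^{\xi}\cdot 0 = 0$. Taking exponentials closes the argument. The only delicate point is ensuring the two logarithmic expansions are uniform across the three asymptotic regimes of Proposition \ref{lem:asympt tau_c M less than expo}, and this is precisely what the observation $\tau_c = o(N)$ delivers; once that observation is in hand, no further case distinction is needed and the three cases can be treated by a single argument.
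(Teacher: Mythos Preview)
Your proof is correct and follows essentially the same strategy as the paper's: both use Proposition~\ref{lem:asympt tau_c M less than expo} to ensure $\tau_c/N\to 0$, compute $Mp(\tau)=\bigl(\tfrac{\tau(1+\tau_c)}{\tau_c(1+\tau)}\bigr)^N\to e^{\xi}$ under the given scaling, and then pass from $(1-p(\tau))^{M}$ to $\exp(-Mp(\tau))$ via $p(\tau)\to 0$. Your algebraic organization (factoring out $p(\tau_c)=1/M$ at the start and writing the ratio as $1+\frac{\xi/N}{1+\tau_c\xi/N}$) is slightly cleaner than the paper's, but the ideas are the same.
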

\begin{proof}
    First we notice that if $\tau = (\tau_c (1+\frac{ \xi}{N} (1+\tau_c ))$, then 
    \begin{align*}
    \left( \frac{\tau }{1+\tau }\right)^N =  \left( \frac{ \tau_c \left(  1+ \frac{(1+\tau_c) \xi}{N} \right) }{1 +\tau_c \left(  1+ \frac{(1+\tau_c) \xi}{N} \right)  } \right)^N &= \left( \frac{\tau_c}{1+\tau_c} \right)^N  \left( \frac{ (1+\tau_c) \left(  1+ \frac{(1+\tau_c) \xi}{N} \right) }{1 +\tau_c \left(  1+ \frac{(1+\tau_c) \xi}{N} \right)  } \right)^N \\
    &= \left( \frac{\tau_c}{1+\tau_c} \right)^N  \left( 1+ \frac{\frac{ (1+\tau_c) \xi }{N}}{1+ \tau_c \left( 1+ \frac{ (1+\tau_c) \xi }{N} \right) }   \right)^N .  
    \end{align*}
    Therefore, since Proposition \ref{lem:asympt tau_c M less than expo} guarantees that when $\lim_{N \to \infty}  M = \infty  $ we have that $\lim_{N \to \infty } \frac{\tau_c}{N } =0$ we deduce that 
    \[ 
   \lim_{N \to \infty }    \left(\frac{\tau }{1 + \tau }\right)^N =     \lim_{N \to \infty } \left( \frac{\tau_c}{1+\tau_c} \right)^N    \lim_{N \to \infty } \left( 1+\frac{  \xi }{N}  \right)^N
    \]
    Since 
    \[
   \lim_{N \to \infty } \left( 1+\frac{ \xi }{N}  \right)^N  =  e^\xi. 
    \]
     we deduce that 
    \begin{equation}\label{exp from tau}
 \lim_{N \to \infty}   \left(  \frac{\tau}{1+\tau} \right)^N  =  e^\xi  \lim_{N\to \infty }  \left( \frac{\tau_c}{1+\tau_c} \right)^N. 
    \end{equation}
   Since  $ \lim_{N \to \infty} \left( \frac{ \tau_c }{1 + \tau_c } \right)^{N}= \lim_{N \to \infty}\frac{1}{M} =0$ we have that $ \lim_{N \to \infty}   \left(  \frac{\tau}{1+\tau} \right)^N =0.$
   
   As a consequence we arrive at the following asymptotics for $q(\tau)=1-p_M(\tau)$
     \begin{align}\label{q as exponential}
 q(\tau) &= (1-p(\tau) )^M 
= \exp \left( M \log \left( 1- \left(\frac{\tau}{1 + \tau} \right)^N \right) \right) \\
& = \exp \left( \left( \frac{1 + \tau_c }{ \tau_c } \right)^N  \log \left( 1- \left(\frac{\tau}{1 + \tau} \right)^N \right) \right) 
\sim \exp \left( - \left( \frac{1 + \tau_c }{ \tau_c } \right)^N  \left(\frac{\tau}{1 + \tau} \right)^N \right), \text{ as } N \to \infty. \nonumber  
\end{align}

 Proposition \ref{lem:asympt tau_c M less than expo}, together with the assumptions on $\tau$, imply
\begin{align*}
   \frac{1}{ \frac{1 + \tau_c }{ \tau_c }  \frac{\tau}{1 + \tau} }  = 1 + \frac{1}{(1+\tau_c ) } \left( \frac{\tau_c}{\tau}-1 \right) \rightarrow 1 \text{ as } N \rightarrow \infty. 
\end{align*}

Therefore we deduce from \eqref{q as exponential} that 
\begin{align*}
   q(\tau ) &\sim \exp \left( - \left( \frac{1 + \tau_c }{ \tau_c } \right)^N  \left(\frac{\tau}{1 + \tau} \right)^N \right) =  \exp \left( - \exp \left( N  \log \left( \frac{1}{1 + \frac{1}{(1+\tau_c ) } \left( \frac{\tau_c}{\tau}-1 \right)} \right) \right) \right)\\
   & \sim \exp \left( - \exp \left( \frac{N}{\tau_c} \frac{\tau_c-\tau}{ (1+\tau)} \right) \right) \sim \exp \left( - \exp \left( \frac{N}{\tau_c} \frac{\tau_c-\tau}{ (1+\tau_c)} \right) \right)  \sim  \exp \left( - e^{\xi}   \right) \text{ as } N \to \infty. 
\end{align*}
Then \eqref{eq:q M >> 1} follows. 
\end{proof}
We now write a corollary that follows directly by Theorem \ref{thm: M >>1} and that allows us to summarize the behaviour of $p_M$ under different assumptions on the growth of $M$. 
As we will see, the only difference between these cases is in the rescaling of $\tau $ that we consider. Indeed the rescaling depends on the critical unbinding time $\tau_c $ which has different asymptotics, as $N \to \infty$, depending on the growth of $M $ as a function of $N$. 
Since the case of $M \approx e^{ bN} $ was already discussed at the beginning of Section \ref{sec:specificity} we focus here only on $\log(M)  \ll N $ and $\log(M) \gg N $. 

\begin{corollary} \label{cor:different thicknesses}
Assume that $M$ is such that $\lim_{N \to  \infty } M=  \infty  $. 
Let $p_M $ be defined as in \eqref{p(tau)_M}. 
\begin{enumerate}
    \item If $ \lim_{N \to  \infty } \frac{\log ( M) }{N }=0$, then 
    \[
\lim_{N \to \infty}   p_M \left( \tau_c + \frac{\tau_c \xi}{\log(M)} \right)= 1- \exp \left( - e^{\xi}   \right)
    \]
    for $\xi\in\mathbb R$.
    \item If $ \lim_{N \to  \infty } \frac{\log ( M) }{N }=\infty $, then 
        \[
\lim_{N \to \infty}   p_M \left( \tau_c + \frac{\tau_c \xi}{N} \right)= 1- \exp \left( - e^{\xi} \right)
    \]
        for $\xi\in\mathbb R$.
\end{enumerate}
\end{corollary}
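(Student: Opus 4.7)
The plan is to derive both statements from Theorem \ref{thm: M >>1} by a change of parameter, using the asymptotics of $\tau_c$ provided by Proposition \ref{lem:asympt tau_c M less than expo}. Theorem \ref{thm: M >>1} asserts that, for each fixed $\xi\in\mathbb R$,
\begin{equation*}
p_M\!\left(\tau_c + \frac{\tau_c\,\xi\,(1+\tau_c)}{N}\right) \longrightarrow 1-\exp(-e^{\xi}).
\end{equation*}
I want to show that the same limit appears when the shift $\frac{\tau_c\,\xi\,(1+\tau_c)}{N}$ is replaced by $\frac{\tau_c\,\xi}{\log(M)}$ in regime (1) and by $\frac{\tau_c\,\xi}{N}$ in regime (2). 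The strategy is to locate, for each fixed $\xi$, a sequence $\eta_N\to\xi$ such that the corollary's shift equals $\frac{\tau_c\,\eta_N\,(1+\tau_c)}{N}$, and then invoke the theorem along this sequence.

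For case~(1), Proposition \ref{lem:asympt tau_c M less than expo}(1) gives $\tau_c \log(M)/N\to 1$; in particular $\tau_c\to\infty$, so that $(1+\tau_c)\log(M)\sim \tau_c\log(M)\sim N$. I would therefore set
\[
\eta_N := \xi\cdot\frac{N}{(1+\tau_c)\log(M)},
\]
which tends to $\xi$, and check the algebraic identity
\[
\tau_c + \frac{\tau_c\,\xi}{\log(M)} \;=\; \tau_c + \frac{\tau_c\,\eta_N\,(1+\tau_c)}{N}.
\]
For case~(2), Proposition \ref{lem:asympt tau_c M less than expo}(3) gives $\tau_c M^{1/N}\to 1$; since $\log(M)/N\to\infty$ forces $M^{1/N}\to\infty$, this yields $\tau_c\to 0$, hence $1+\tau_c\to 1$. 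Setting $\eta_N := \xi/(1+\tau_c)\to\xi$, I get in the same way
\[
\tau_c + \frac{\tau_c\,\xi}{N} \;=\; \tau_c + \frac{\tau_c\,\eta_N\,(1+\tau_c)}{N}.
\]

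The one nontrivial point is to justify that Theorem \ref{thm: M >>1} remains valid when the fixed $\xi$ is replaced by a sequence $\eta_N\to\xi$. This is the step I expect to be the main obstacle, but it reduces to a continuity-in-parameter observation: every asymptotic identity used in the proof of Theorem \ref{thm: M >>1}, most crucially $(1+\xi/N)^N\to e^{\xi}$ and $\log(1-x)\sim -x$, holds uniformly for $\xi$ in any compact set. Replacing $\xi$ by $\eta_N\to\xi$ therefore yields $\exp(-e^{\eta_N})\to\exp(-e^{\xi})$, which is precisely the claimed limit. Equivalently, one may simply rerun the chain of asymptotic equivalences in the proof of Theorem \ref{thm: M >>1} with $\eta_N$ in place of $\xi$; since $\eta_N\to\xi$ is bounded, every step goes through verbatim.
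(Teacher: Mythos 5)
Your proof is correct and follows essentially the same route as the paper, which simply deduces the corollary from Theorem \ref{thm: M >>1} together with the $\tau_c$ asymptotics of Proposition \ref{lem:asympt tau_c M less than expo}. In fact you are somewhat more careful than the paper: you make explicit that the corollary's shift only agrees with the theorem's shift up to a factor tending to $1$, so that one must either rerun the theorem's proof with a sequence $\eta_N\to\xi$ or note the uniformity of the asymptotics on compact $\xi$-sets --- a point the paper leaves implicit.
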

\begin{proof}
This corollary follows by Proposition \ref{lem:asympt tau_c M less than expo} and by Theorem \ref{thm: M >>1}. 
\end{proof}

Notice that in both cases, i.e.~both when $\lim_{N \to \infty} \frac{\log(M ) }{N} =0$ and when $\lim_{N \to \infty}  \frac{ \log( M)}{N} = \infty $, we have that the system has the specificity property.
Moreover, the probability distribution describing the transition between non-response and response is universal and the only difference between the two scaling limits is the thickness of the region of transition. 
Indeed, under both assumption we obtain that there exists a suitable rescaling of the unbinding time such that $p_M \sim 1- e^{- e^{\xi}} $ as $N \to \infty $. 
However, we have different rescalings that correspond to different thicknesses of the transition region.

Indeed, when $\lim_{N \to \infty} \frac{\log(M ) }{N} =0$ then the transition between response and non-response takes place when $\left| \frac{\tau}{\tau_c } -1 \right| \approx \frac{1}{\log( M ) } 
 $ as $N \to \infty $. 
The thickness of the region of transition from non-response to response depends only on $M$ and does not depend directly on $N$. Hence the discrimination is possible only if the unbinding $\tau $ is such that 
\[
\left| \frac{\tau}{\tau_c}-1 \right| \gtrsim \frac{1}{\log(M)} \gg \frac{1}{N} \text{ as } N \to \infty. 
\]

If, instead,  $\lim_{N \to \infty}  \frac{ \log( M)}{N} = \infty $, then Corollary \ref{cor:different thicknesses} implies that the transition between response and non-response takes place when $\left| \frac{\tau}{\tau_c } -1 \right| \approx \frac{1}{N} $. 
Therefore  also in this case the discrimination takes place when 
\[
\left| \frac{\tau}{\tau_c}-1 \right| \gtrsim \frac{1}{N} \text{ as } N \to \infty . 
\]
The specificity of the model is the same that we had in the case $M \approx e^{b N } $ as $N \to \infty $. However in this case a larger number of trials is required, indeed $\log(M) \gg N $. 
Another difference between this case and the former case ($M \approx e^{bN}$) is that in this case we have that $ \lim_{N \to \infty} \tau_c =0$.

\section{Origin of the high specificity of the KPR network} \label{sec:delay}
It has been argued sometimes that the reason behind the capability of KPR to discriminate between different ligands is due to the fact that it induces a delay in the response. 
As a matter of fact, we will see here that there is a large difference of behaviours between KPR and a model of delay. 
Indeed, in this section, we compare the probabilistic model of kinetic proofreading that we propose in this paper with a model in which the chain of phosphorylation events is substituted by a simple reaction, leading to a delay of order $N$ in the response. 
Notice that, since the characteristic time of each of the $N $ phosphorylation steps is of order $1$, this is the expected time delay for the KPR model.  

We start by considering the case in which a ligand can reattach only once to the receptor, hence we consider $M=1$.
It turns out that, for unbinding times $\tau = N x$ for $x >0$, the probability of response of the kinetic proofreading model $p(\tau)$ and the probability of response $\overline p (\tau)$ of the model with a delay differ only for the behaviour as $x \rightarrow 0^+ $. 
Indeed the probability of response of the kinetic proofreading model is $p(\tau) \sim e^{- 1/x}$ and tends to zero much faster than the probability of response  of the model with delay, $\overline p (\tau)\sim \frac{x}{1+x}$, as $x \rightarrow 0^+$ (see Figure \ref{fig2}).

We then consider the probability of response for the two models for large values of $M$.
We consider a particular scaling of $M$ and $N$ that  allows to consider before the limit as $N \to \infty $ of the probability of response in a single trial and then the limit as $M \to \infty$. 
This limit shows how the approximation $1- \exp (- e^x) $ for the probability of response after $M $ trials arises, demonstrating that the specificity of the KPR originates from the approximation $ p (\tau ) \sim e^{- \frac{N }{\tau }}$ as $N \to \infty $ of the probability that a ligand triggers a response after one trial. 
We will then illustrate that, instead, for the model with the delay the specificity does not improve when the number of trials $M$ increase. We compare the model with delay with the KPR model in Figure \ref{fig3}. 

Before presenting the model with the delay we provide the asymptotic behaviour of the probability of response when $ M =1 $ as $N \to \infty$. This will be similar to the case analysed in Section \ref{sec:spec M order 1}.

\subsection{Probability of response for the KPR model when $ M =1 $ and $N \to \infty$} \label{sec: prob response M=1}
In this section we prove that for ligands with unbinding times $\tau = N x $, the probability $p (\tau )$ of response (that is given by \eqref{p(tau)}) upon attachment of the ligand to the receptor is approximated by  $e^{ - \frac{1}{x}}$ as $N \to \infty $. 
\begin{proposition}[$M=1 $] \label{prop: M=1}
    Assume that $  M= 1 $.  
    Then 
    \begin{equation} \label{eq:q M = 1}
  \lim_{ N \to \infty }  p(x N ) =   e^{ - \frac{1}{x}},  
    \end{equation}
    for $x >0$. 
\end{proposition}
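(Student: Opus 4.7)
The plan is to substitute $\tau = xN$ directly into the explicit formula \eqref{p(tau)} for $p(\tau)$ and reduce the limit to the classical identity $\lim_{N\to\infty}\left(1+\tfrac{a}{N}\right)^N = e^{a}$.

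First I would write
\begin{equation*}
p(xN) = \left(\frac{xN}{1+xN}\right)^N = \left(\frac{1}{1+\tfrac{1}{xN}}\right)^N = \left(1+\frac{1}{xN}\right)^{-N}.
\end{equation*}
Then I would take logarithms: $\log p(xN) = -N\log\!\left(1+\tfrac{1}{xN}\right)$. Using the Taylor expansion $\log(1+u) = u + O(u^2)$ as $u \to 0$ with $u = \tfrac{1}{xN}$ (which tends to zero for fixed $x>0$ as $N\to\infty$), I get
\begin{equation*}
\log p(xN) = -N\left(\frac{1}{xN} + O\!\left(\frac{1}{N^2}\right)\right) = -\frac{1}{x} + O\!\left(\frac{1}{N}\right).
\end{equation*}
Exponentiating and passing to the limit yields $\lim_{N\to\infty} p(xN) = e^{-1/x}$, which is \eqref{eq:q M = 1}.

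There is essentially no obstacle: this is a one-line computation relying only on the explicit formula for $p(\tau)$ already established and the standard exponential limit. The only point worth a line of care is that $\tfrac{1}{xN}\to 0$ uniformly on compact subsets of $x\in(0,\infty)$, so the approximation $\log(1+u)\sim u$ is valid; this justifies the passage to the limit for every fixed $x>0$ as required by the statement.
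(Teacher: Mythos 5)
Your proof is correct and follows essentially the same route as the paper: substitute $\tau = xN$ into \eqref{p(tau)}, rewrite the power as an exponential of $N\log(1+\tfrac{1}{xN})^{-1}$, and expand the logarithm to obtain $-\tfrac{1}{x}+O(\tfrac1N)$. Nothing is missing.
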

\begin{proof}
    Using \eqref{p(tau)} we deduce that 
\begin{align} \label{tau expansion M order 1}
p(x N )= &  \left(\frac{ N  x   }{1 + Nx } \right)^N = \exp \left( N \log \left(\frac{1}{1 + \frac{1}{N x} } \right)\right) 
  \sim  \exp \left( -  \frac{N}{1 + N x } \right) \sim  e^{ -  \frac{1}{ x } }\text{ as } N \rightarrow \infty. \nonumber
\end{align} 
Hence \eqref{eq:q M = 1} follows. 
\end{proof}
The most remarkable feature of this  model is the exponential behaviour of the probability of response $p(x N ) $ as $x \rightarrow 0$. As we will see later this exponential behaviour will be responsible of the high sensitivity of the KPR model (at least when $\log(M) \ll N $). 
\subsection{Comparison with a model with delay}
We compare the model of kinetic proofreading for $M=1 $ with a simple chemical model yielding a delay. 
More precisely we assume that the complex receptor-ligand can be only at two states $1$ and $ 2$. When the complex is at state $1$, the ligand can detach from the complex, with rate $1/\tau $.
Moreover we assume that the reaction
\[
(1) \rightarrow (2) 
\]
is irreversible and takes place at rate $1/N$.
When the complex reaches the state $2 $, then it produces the response with probability one. 

We can write the ODEs for the probabilities $n_1(t), n_2(t) $ of reaching the state $1$, or, respectively the state $2$, before time $t>0$. 
Then 
\begin{align*}
\frac{d}{dt} n_1(t) &= - \left( \frac{ N+\tau }{  \tau N  } \right) n_1(t) , \\
\frac{d}{dt} n_2(t) &=  \frac{1}{N} n_1(t)
\end{align*} 
with $n(0)= e_1 \in \mathbb R^2$. 

The probability that the attachment of the ligand to the receptor leads to a response is then given by 
\begin{equation}\label{prob delay}
\overline p (\tau ) = \int_0^\infty n_2(t) dt= \frac{\tau }{ \tau + N }.   
\end{equation}
\begin{proposition}[$M=1 $] \label{prop: M=1 delay} 
The function $\overline p $ defined as \eqref{overline p} satisfies 
    \begin{equation} \label{overline p}
  \lim_{ N \to \infty }  \overline p(\xi N ) =\frac{\xi }{1+\xi} ,  
    \end{equation}
    for $\xi >0$.
\end{proposition}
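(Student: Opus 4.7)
This statement follows by direct substitution into the closed-form expression for $\overline p(\tau)$ already derived in \eqref{prob delay}. The plan is simply to recall that $\overline p(\tau)=\tau/(\tau+N)$ and then set $\tau=\xi N$, which gives
\[
\overline p(\xi N)=\frac{\xi N}{\xi N+N}=\frac{\xi}{1+\xi}.
\]
The right-hand side is independent of $N$, so the limit as $N\to\infty$ is immediate and no asymptotic estimates are required. The only preliminary work is to confirm the formula \eqref{prob delay} itself, and this is a routine computation for the linear two-state ODE: one solves $n_1(t)=\exp\bigl(-\tfrac{N+\tau}{\tau N}\, t\bigr)$ and then evaluates the total probability of ever reaching the response state, $\tfrac{1}{N}\int_0^\infty n_1(s)\, ds=\tau/(N+\tau)$, via a single elementary integral.

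Since there is no genuine obstacle in the calculation, the substantive content of the proposition is comparative, so I would use the proof to highlight the contrast with Proposition \ref{prop: M=1}. There we had $p(xN)\to e^{-1/x}$, which vanishes \emph{exponentially} as $x\to 0^+$, whereas here $\overline p(\xi N)\to \xi/(1+\xi)\sim \xi$ vanishes only \emph{linearly} as $\xi\to 0^+$. This is precisely the qualitative difference that the section is designed to isolate: the exponential suppression in the KPR case, once amplified by $M$ trials through $1-(1-p)^M$, is what produces the sharp non-response-to-response transition and the universal profile $1-\exp(-e^\xi)$ of Theorem \ref{thm: M >>1}, while the polynomial suppression of the delay model cannot generate such a threshold no matter how large $M$ is taken. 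Thus the "proof" should be stated as a one-line substitution, with the emphasis placed on identifying this as the baseline against which the KPR exponential will subsequently be compared.
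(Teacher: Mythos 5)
Your proof is correct and follows the same route as the paper: substitute $\tau=\xi N$ into the closed form $\overline p(\tau)=\tau/(\tau+N)$ from \eqref{prob delay} and observe the result $\xi/(1+\xi)$ is independent of $N$ (you rightly note it is an exact identity, whereas the paper writes it as an asymptotic equivalence). The verification of \eqref{prob delay} via the two-state ODE and the comparative remarks are consistent with the paper's development.
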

\begin{proof}
    Using \eqref{prob delay} we deduce that 
$
\overline p(\xi N )=  \frac{ N  \xi   }{N + N\xi } \sim  \frac{\xi }{ 1 + \xi }\text{ as } N \rightarrow \infty. 
$

\end{proof}

Notice that Proposition \ref{prop: M=1} and Proposition \ref{prop: M=1 delay} imply that $ \overline p (x N ) \sim  p (x N  ) $ as $x \rightarrow \infty$. 
Instead, the behaviours of $\overline p(x N ) $ and $p (x N   )$ are very different as $x \rightarrow 0^+$. In particular $p(x N   ) $ converges much faster than $\overline p (x N  ) $ to zero as $x \rightarrow 0^+$.
See Figure \ref{fig2}. 
\begin{figure}[H] 
\centering
\includegraphics[width=0.5\linewidth]{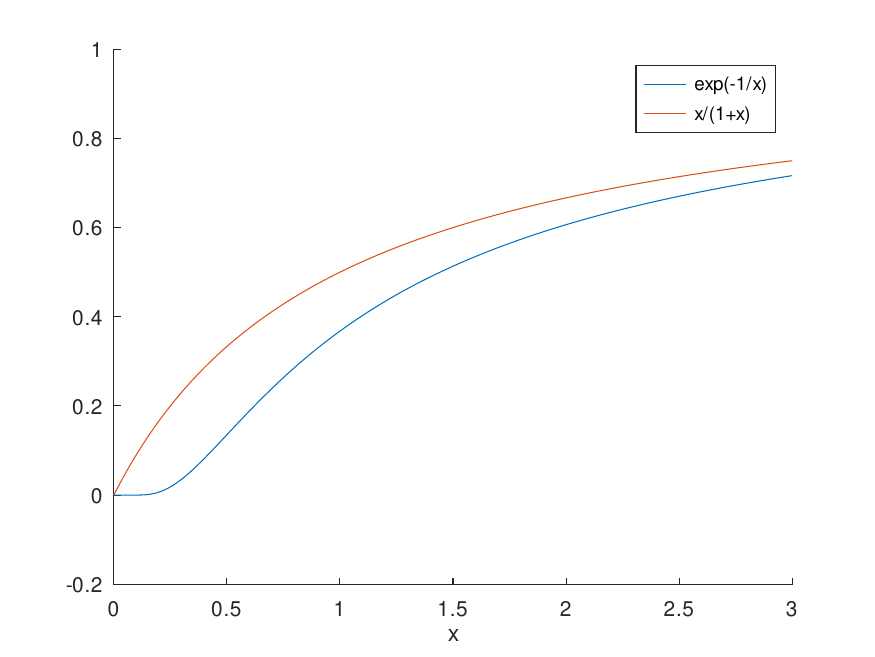}
\caption{
Plot of the probability of response, up to the scaling $\tau = x N $, of the kinetic proofreading model and of the model with a simple delay. Notice the different behaviour of the two functions at zero. 
}\label{fig2}
\end{figure}

We now consider the probability, for the model with delay, that a ligand triggers a response after having attached $M \geq 1$ to the receptor. 
Notice that the critical time $\overline{\tau}_c $ in this case is the solution of 
\[
M \overline p (\overline {\tau}_c ) =  \frac{ M \overline{\tau}_c }{\overline{\tau}_c  + N} =1 . 
\]
 Hence if $M > 1 $ we have that 
 \begin{equation} \label{overline tau_c}
 \overline \tau_c = \frac{N }{M-1 }. 
  \end{equation}

\begin{proposition}[$M $ large] \label{prop: M large delay} 
If $\lim_{N \to \infty } \frac{M}{N} =1 $, then $ \lim_{N \to \infty } \frac{M}{N}\overline \tau_c =1 $.
    If, instead $\lim_{N \to \infty } \frac{\log(M)}{N} =b>0 $, then $ \lim_{N \to \infty } \frac{  (e^{bN} -1)\overline \tau_c }{N} = 1$.
Moreover, in both the two cases, the function $\overline p_M$ defined as 
\[
\overline p_M(\tau):=1- \left( 1-\overline p (\tau) \right)^M
\]
is such that  
  \begin{equation} \label{overline p}
\lim_{N  \to \infty } \overline p_M(x \overline \tau_c ) =1-  e^{-x}
    \end{equation}
    for $x >0$. 
\end{proposition}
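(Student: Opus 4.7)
The plan is to verify both asymptotic statements directly from the explicit formula \eqref{overline tau_c} for $\overline{\tau}_c$ and the explicit formula $\overline{p}(\tau) = \tau/(\tau+N)$. Both asymptotics amount to elementary manipulations together with the standard limit $\log(1-u) \sim -u$ as $u \to 0^+$; there is no serious obstacle.

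First I would handle the asymptotics of $\overline{\tau}_c$. Since $\overline{\tau}_c = N/(M-1)$, in the case $M/N \to 1$ we have $\frac{M}{N}\overline{\tau}_c = \frac{M}{M-1}$, which tends to $1$ because $M \to \infty$. In the case $\log(M)/N \to b > 0$ we have $\frac{(e^{bN}-1)\overline{\tau}_c}{N} = \frac{e^{bN}-1}{M-1}$, and since $M \sim e^{bN} \to \infty$ and $e^{bN} - 1 \sim e^{bN}$, this ratio tends to $1$. Note that in both scaling regimes one obtains $M \to \infty$ as $N \to \infty$, which is all that is needed for the remainder of the proof.

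Next I would compute $\overline{p}(x\overline{\tau}_c)$ for fixed $x > 0$. Substituting $\overline{\tau}_c = N/(M-1)$ into the formula for $\overline{p}$ gives after simplification
\begin{equation*}
\overline{p}(x\overline{\tau}_c) = \frac{x\overline{\tau}_c}{x\overline{\tau}_c + N} = \frac{x}{x + M - 1}.
\end{equation*}
Therefore $1 - \overline{p}(x\overline{\tau}_c) = (M-1)/(x + M - 1)$, and consequently
\begin{equation*}
\overline{p}_M(x\overline{\tau}_c) = 1 - \left(1 - \frac{x}{x+M-1}\right)^{M}.
\end{equation*}

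Finally I would pass to the limit. Taking the logarithm inside the bracket and using $\log(1-u) \sim -u$ as $u \to 0^+$ (valid since $M \to \infty$ so $x/(x+M-1) \to 0$) yields
\begin{equation*}
M \log\!\left(1 - \frac{x}{x+M-1}\right) \sim -\frac{Mx}{x+M-1} \longrightarrow -x \quad \text{as } N \to \infty,
\end{equation*}
so $(1-\overline{p}(x\overline{\tau}_c))^M \to e^{-x}$, which gives \eqref{overline p}. The contrast with the KPR result of Theorem \ref{thm: M >>1} is striking: the transition described by $1 - e^{-x}$ spans a region of order $\overline{\tau}_c$ around $\overline{\tau}_c$ rather than of order $\overline{\tau}_c(1+\overline{\tau}_c)/N$, and this lack of sharp transition is what demonstrates that the delay model does not share the specificity property of the KPR network.
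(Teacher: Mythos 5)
Your proof is correct and follows essentially the same route as the paper's: both arguments reduce everything to the explicit formulas $\overline{\tau}_c = N/(M-1)$ and $\overline{p}(\tau)=\tau/(\tau+N)$ and then use $(1-u)^M\to e^{-x}$ for $u\sim x/M$, the only cosmetic difference being that you substitute $\tau=x\overline{\tau}_c$ before passing to the limit, which makes the intermediate expression $x/(x+M-1)$ exact. The one small caveat is that in the second regime your step ``since $M\sim e^{bN}$'' is strictly stronger than the stated hypothesis $\log(M)/N\to b$ (which allows, e.g., $M=e^{bN+\sqrt{N}}$, for which $(e^{bN}-1)/(M-1)\not\to 1$); this imprecision is already present in the proposition and in the paper's own proof, and the main conclusion \eqref{overline p} only requires $M\to\infty$, which both regimes guarantee.
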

\begin{proof}
The asymptotics of $\tau_c$ as $N \to \infty $ follows by \eqref{overline tau_c}.  
Moreover, in both cases we obtain that 
\begin{align*}
   \overline p_M( \tau)= 1 - \left( 1 - \frac{\tau}{\tau+ N }\right)^M = 1 - \left(\frac{N}{ \tau + N } \right)^M  = 1- \left( \frac{1}{1+\frac{\tau}{N} }\right)^M \sim 1- e^{ - \frac{M}{N}\tau},  
\end{align*}
where $\tau = x \overline \tau_c $. 
In both cases we obtain \eqref{overline p}. 
\end{proof}
We now examine a particular scaling of $M $ and of $N $, with $M \to \infty $ in order to see the different discrimination properties of the stochastic KPR model and the delay model described above. 
In particular, we compare the behavior of $\overline p_M $ with the behaviour of $p_M$ as $M \approx N $ as $ N \to \infty $. 
If $M$ does not contain an exponential dependence on $N $ (for instance if $M \approx N $), it is possible to compute first the probability of response $p_M $  for $ N \to \infty $ in a single trial and later to consider the limit as the number of trials $M $ tends to infinity. 
In this way we deduce, using \eqref{eq:q M = 1}, that  $p_M (\tau ) \sim 1 - (1-e^{-\frac{N}{\tau }})^M$ as $N \to \infty$. 
Then taking the limit as $M \to \infty $, where we assume that $\log(M) \ll N $, we deduce that 
\[
p_M (\tau ) \sim 1 - (1-e^{-\frac{N}{\tau }})^M \sim 1- \exp(-M e^{-\frac{N}{\tau} }) = 1- \exp(- e^{ \log(M)-\frac{N}{\tau} }). 
\]
Since, as obtained also in Section \ref{sec:spec M infty}, the critical unbinding time is such that $\tau_c \sim \frac{N }{ \log(M) }$ as $N \to \infty$, we obtain that
\[
p_M (\tau )\sim  1- \exp(- e^{ N \frac{\tau-\tau_c }{\tau \tau_c }}) = 1- \exp(- e^{  \frac{\log(M) (\tau-\tau_c) }{\tau }}) \text{ as } N \to \infty \text{ and } M \to \infty. 
\]
Taking $ 1- \frac{\tau_c }{\tau  } = \frac{ x }{ \log(M) }  $ for $x \in \mathbb R$ we obtain the asymptotics $p_M (\tau ) \sim 1- \exp (- e^x ) $ as $N \to  \infty $ and $M\to  \infty $. 
\begin{figure}[H] 
\centering
\includegraphics[width=0.5\linewidth]{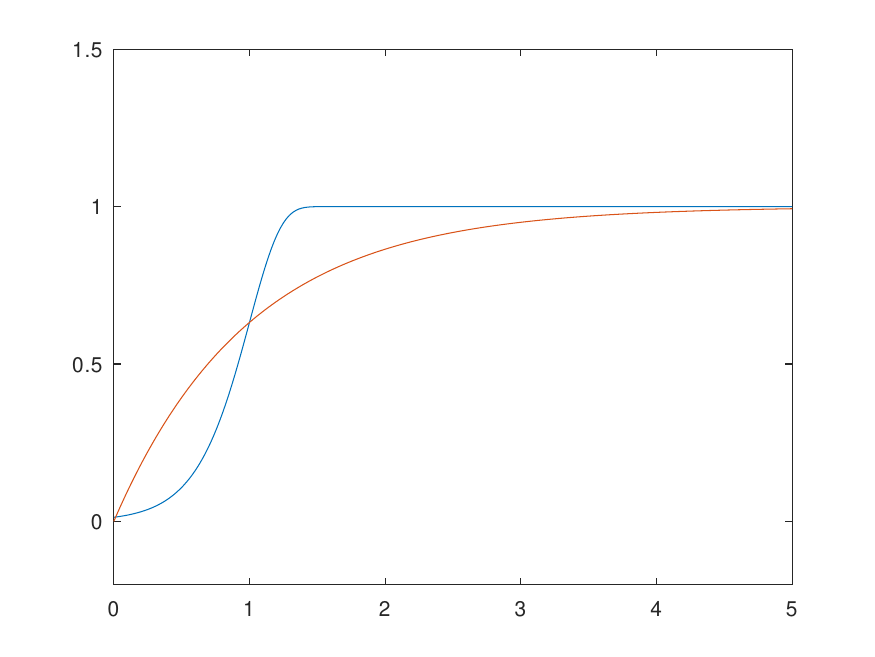}
\caption{In red we plot the approximation of $ x \mapsto \overline p_M(x \overline \tau_c)$ for $N =10$ and $M =10$.  
In blue we plot the approximation of $ x \mapsto p_M(x \tau_c)$ for the same parameters. 
}
\label{fig3}
\end{figure}

\section{Sensitivity} 
\label{sec:sensitivity}
In this section we study the sensitivity of the kinetic proofreading model. 
In the previous sections we assumed the number of ligands to be equal to one.
In this section we assume that the number of ligands $L$ is a function of the number of KPR steps, as we did for $M$. 
As a consequence, the critical unbinding time $\tau_c(L) $ could depend not only on $M $, but also on the number of ligands $L$. 
We prove that the asymptotic behaviour of the critical time $\tau_c(L) $ for $L>1$ does not differ significantly from the asymptotic of the critical time $\tau_c=\tau_c (1)$ that we had for the model with $L=1$, as long as $ \log(L) \ll N $ and $M \approx e^{ b N } $ for $b >0$. 

In particular, when $M \approx e^{bN} $ as $N \to \infty $ it turns out that, when $\log(L) \ll  N $, the critical time does not depend on the number of ligands and it depends only on the value of $b$.
Indeed, in this case, the critical time is such that 
\[
\tau_c(L) \sim \frac{1}{e^b - 1 } \text{ as } N \to \infty 
\]
as long as $\log(L) \ll N $. 
Therefore, the asymptotic behaviour of the critical unbinding time does not depend on the number of ligands when $M  \approx e^{bN } $ and $ \log(L) \ll N $ as $N \to \infty $. Therefore, under these assumptions the model satisfies both the sensitivity and the specificity properties. 
In the other cases, i.e. $M \approx 1 $, $1 \ll \log(M)  \ll N $ and $\log(M) \gg N $ and $\log(L) \ll N $ the critical time $\tau_c(L) $ depends, in some cases, on $L$. 

\subsection{Critical unbinding time for $L>1$}
We study now the asymptotic behaviour of $\tau_c $ under different assumptions on $M $ and on $L$. 
To this end we first of all define the critical unbinding time, in a similar way as we do in Section \ref{sec:sensitivity}. 
\begin{definition} \label{def:critimal time L>1}
Let $M> 1 $, $N \geq 1$ and $L> 1$. 
 We define the critical unbinding time $\tau_c(L)$ as the solution of    
 \[ 
  L M p(\tau_c(L))=1. 
\] 
\end{definition}
As a consequence, 
\begin{equation} \label{tauc exp L}
\tau_c(L)=  \frac{1}{{(LM) }^{\frac{1}{N}}-1}=\frac{1} {\exp\left(\frac{ \log(M) + \log(L)}{N}\right)-1}. 
\end{equation}

First of all we study the asymptotic behaviour of $\tau_c(L)$ as $N \rightarrow \infty $ assuming that $ \log(L) \ll N  $ as $N \rightarrow \infty $. 
We prove that if $M \approx 1$ and $L \approx 1 $ we have that $\tau_c (L) \sim \frac{N}{\log(M) + \log(L) } $ as $N \to \infty$. Hence, under these assumptions on $M $ and on $L$, the behaviour of $ \tau_c =\tau_c(1)$ and of $\tau_c(L) $, with $L>1$, differ by a constant as $N \to \infty$. However, in both cases we have that the critical unbinding time tends to infinity. 
\begin{proposition}[$M \approx 1 $and $ L\approx 1$ ]\label{lem:asympt tau_c LM order 1}
 Assume that $ L>1  $ and $M >1$ are both constant. 
Then 
 \[ 
 \lim_{N \to \infty } \frac{\tau_c(L)\left( \log(M) + \log(L)\right) }{N} =1
 \] 
 \end{proposition}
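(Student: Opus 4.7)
The plan is to argue in exactly the same way as in the proof of Proposition \ref{lem:asympt tau_c M order 1}, using the explicit formula \eqref{tauc exp L} for $\tau_c(L)$ and a Taylor expansion of the exponential at $0$.

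First I would recall that by \eqref{tauc exp L},
\begin{equation*}
\tau_c(L) = \frac{1}{\exp\!\left(\frac{\log(M) + \log(L)}{N}\right) - 1}.
\end{equation*}
Under the assumption that $M > 1$ and $L > 1$ are both constants, the quantity $\frac{\log(M) + \log(L)}{N}$ tends to $0$ as $N \to \infty$. I would then use $e^x - 1 \sim x$ as $x \to 0$ to conclude that
\begin{equation*}
\tau_c(L) \sim \frac{N}{\log(M) + \log(L)} \quad \text{as } N \to \infty,
\end{equation*}
which is precisely the desired asymptotic identity. Multiplying both sides by $\frac{\log(M) + \log(L)}{N}$ gives $\lim_{N \to \infty} \frac{\tau_c(L)(\log(M)+\log(L))}{N} = 1$.

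There is no real obstacle here: the statement reduces to the first-order Taylor expansion of $e^x$ at the origin applied to the closed-form expression \eqref{tauc exp L}, and the constancy of $M$ and $L$ ensures that the argument of the exponential vanishes as $N \to \infty$. The proof is effectively a two-line computation identical in structure to that of Proposition \ref{lem:asympt tau_c M order 1}, with the only change being that $\log(M)$ is replaced by $\log(M) + \log(L)$ in the denominator.
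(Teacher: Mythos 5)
Your proof is correct and follows exactly the route the paper takes: the paper simply states that the result follows from \eqref{tauc exp L}, and your expansion $e^x - 1 \sim x$ as $x \to 0$ applied to $\frac{\log(M)+\log(L)}{N}$ is precisely the intended (and only needed) computation. No gaps.
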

 \begin{proof}
The statement follows by \eqref{tauc exp L}.
 \end{proof}
 
Similarly, if $M \approx 1$ and $\log(L) \ll N  $ we have that $\tau_c (L) \sim \frac{N}{\log(L)} $ as $N \to \infty$. Both $\tau_c(L) $ for $L > 1 $ and $\tau_c = \tau_c(1)$ tend to infinity as $N \to \infty $, but the asymptotic behaviors are different.
\begin{proposition}[$M \approx 1 $ and $ 1 \ll \log(L) \ll N$ ]\label{lem M order 1 and log(L) ll N}
 Assume that $\lim_{N \to \infty } L =\infty $ and  $ \lim_{N \to \infty} \frac{\log(L)}{N}=0$ and $M >1$ and constant. 
Then 
 \[ 
 \lim_{N \to \infty } \frac{\tau_c(L)\left( \log(L)\right) }{N} =1
 \] 
 \end{proposition}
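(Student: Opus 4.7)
The plan is to read off the result directly from the explicit formula \eqref{tauc exp L} by doing a Taylor expansion of the exponential. First I would set $y_N := \frac{\log(M)+\log(L)}{N}$ and observe that under the current hypotheses ($M>1$ constant, so $\log(M)$ is a fixed positive number, while $\log(L)/N \to 0$), we have $y_N \to 0$ as $N\to\infty$. Hence $e^{y_N}-1 \sim y_N$, which, inserted into \eqref{tauc exp L}, gives
\[
\tau_c(L) \sim \frac{N}{\log(M)+\log(L)} \quad \text{as } N\to\infty.
\]

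The second step is to dispose of the constant term $\log(M)$ in the denominator. Since $\lim_{N\to\infty} L = \infty$ by assumption, we have $\log(L)\to\infty$, so $\log(M)/\log(L) \to 0$ and consequently $\log(M)+\log(L) \sim \log(L)$. Combining with the previous asymptotic gives
\[
\tau_c(L)\,\log(L) \;\sim\; \frac{N\,\log(L)}{\log(M)+\log(L)} \;\sim\; N
\]
as $N\to\infty$, which is exactly the claim $\lim_{N\to\infty}\tau_c(L)\log(L)/N = 1$.

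There is essentially no obstacle here: the argument is a one-line Taylor expansion followed by the observation that a bounded summand is negligible compared to a divergent one. The only subtlety worth flagging is that this proposition, unlike Proposition \ref{lem:asympt tau_c LM order 1}, genuinely uses $\log(L)\to\infty$ in order to absorb $\log(M)$ into $\log(L)$; if $L$ were kept constant one would recover the $\frac{N}{\log(M)+\log(L)}$ behavior of the previous proposition instead. I would simply record this as a short proof of two or three lines, citing \eqref{tauc exp L} and the assumptions.
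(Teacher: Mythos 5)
Your argument is correct and is exactly the computation the paper intends: the paper omits the proof of this proposition (the neighbouring results are all proved by the one-line remark that they follow from \eqref{tauc exp L}), and your Taylor expansion $e^{y_N}-1\sim y_N$ followed by absorbing the constant $\log(M)$ into the divergent $\log(L)$ is precisely that implicit argument, written out. No issues.
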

As a consequence, when $M \approx 1 $, to have a substantial change in the behaviour of the critical $\tau_c(L) $ for $L > 1$, i.e.~to have either that $ \tau_c(L) $ is of order one or that $\tau_c(L) $ tends to zero as $N \to \infty$, we need to have a very large number of ligands, namely $\log(L) \approx N $ or $\log(L) \gg N $.

 In the following lemma we analyse the asymptotics of the critical time $\tau_c(L) $ for large values of $N$ when $ 1 \ll M $ and $ \log(M) \ll N$. 
 We obtain different asymptotics depending on whether $\log(M) \ll \log(L) $, or $\log(M) \approx  \log(L) $ or $\log(M) \gg \log(L) $ for large values of $N$ and of $M$. 
 In particular in all the cases we have that $\tau_c (L) $ tends to infinity, as if was for $\tau_c $. Moreover, if $\log(M) \gg \log(L) $ we have that $\tau_c(L) $ behaves exactly as $\tau_c $ for large values of $N$ and for $\log(M) \ll N$. 
 \begin{proposition}[$ 1 \ll \log(M)  \ll N $ and $ \log(L) \ll N $]
 \label{lem:asympt tau_c LM less than expo}
 Assume that $\lim_{N \to \infty } M = \infty $ and 
 $\lim_{N \to \infty}\frac{\log (L) }{N }= \lim_{N \to \infty}\frac{\log ( M) }{N }=0 $. 

\begin{enumerate}[(1)]
\item If $ \lim_{N \to \infty} \frac{\log(M)}{\log(L)}= 0  $, then $ \lim_{N \to \infty} \frac{ \tau_c(L) \log(L) }{N}  =1 $. 
\item If $ \lim_{N \to \infty} \frac{\log(M)}{\log(L)}= c >0  $, then $ \lim_{N \to \infty}  \frac{(c+1) \log(L) \tau_c(L) } {N} =1  $. 
\item If $ \lim_{N \to \infty} \frac{\log(M)}{\log(L)}=\infty$, then $  \lim_{N \to \infty} \frac{\log(M) \tau_c(L) }{N}  =1 $. 
    \end{enumerate}
    \end{proposition}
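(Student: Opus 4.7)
The plan is to work directly from the explicit formula \eqref{tauc exp L}, namely
\[
\tau_c(L) = \frac{1}{\exp\!\left(\frac{\log(M)+\log(L)}{N}\right)-1},
\]
and to exploit the fact that the exponent $\frac{\log(M)+\log(L)}{N}$ tends to $0$ as $N\to\infty$ under the hypotheses $\log(M)\ll N$ and $\log(L)\ll N$. The Taylor expansion $e^x-1\sim x$ as $x\to 0$ then gives the universal asymptotic
\[
\tau_c(L) \sim \frac{N}{\log(M)+\log(L)}\quad\text{as } N\to\infty,
\]
which is the single equivalence from which all three cases of the proposition follow by elementary manipulations.

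I would then dispatch each of the three cases by determining which of $\log(M)$ and $\log(L)$ dominates the denominator. In case (1), the hypothesis $\log(M)/\log(L)\to 0$ forces $\log(M)+\log(L)\sim \log(L)$, hence $\tau_c(L)\log(L)/N\to 1$. In case (2), the assumption $\log(M)/\log(L)\to c>0$ yields $\log(M)+\log(L)\sim (c+1)\log(L)$, so $(c+1)\log(L)\tau_c(L)/N\to 1$. In case (3), $\log(L)/\log(M)\to 0$ gives $\log(M)+\log(L)\sim \log(M)$, whence $\log(M)\tau_c(L)/N\to 1$.

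The only thing that requires a small amount of care is justifying the Taylor expansion uniformly enough to write $\sim$ in the denominator: here it is harmless because the exponent tends to $0$, and $\frac{e^x-1}{x}\to 1$, so $\tau_c(L)(\log(M)+\log(L))/N\to 1$ without any further estimate. There is no real obstacle — the statement is essentially an algebraic consequence of \eqref{tauc exp L} combined with the regime $\log(LM)=o(N)$, and the proof should amount to at most a few lines once the asymptotic equivalence of $e^x-1$ with $x$ at the origin is invoked.
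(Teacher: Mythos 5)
Your proposal is correct and follows exactly the route the paper intends: the paper's proof is simply the remark that the statement follows from \eqref{tauc exp L}, and your expansion $e^x-1\sim x$ applied to $x=\frac{\log(M)+\log(L)}{N}\to 0$, giving $\tau_c(L)\sim \frac{N}{\log(M)+\log(L)}$ and then identifying the dominant term in the denominator case by case, is precisely the computation being left implicit. No gaps.
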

    \begin{proof}
The statement follows by \eqref{tauc exp L}. 
    \end{proof}

Finally, if $\log(M) \approx N $ and $\log(L) \ll N $ we obtain that $\tau_c =\tau_c(1) \sim \tau_c(L) $ as $N \rightarrow \infty$, which means that the asymptotic behaviour as $N \to \infty $ of $\tau_c $ and $\tau_c(L) $ for $L > 1 $ are the same.  
To have that the critical time changes behaviour (i.e.~$\tau_c(L) \rightarrow 0$ or $\tau_c(L) < \tau_c $ as $N \to \infty$) we need to consider a number of ligands such that $\log(L) \gg N$.
Notice that if $N \approx 10 $, and if $M \approx e^{ 10 b } $ for some constant $b>0$, then the sensitivity property holds as long as $\log(L) \ll 10 $, i.e.~as long as $L \ll 1000$.  
  \begin{proposition} \label{lem:asympt tau_c LM expo}
  Assume that $\lim_{N \to \infty } M = \infty $ and $ \lim_{N \to  \infty } \frac{\log (M) }{N }  = b >0$ and that $\lim_{N \to \infty} \frac{\log(L) }{ N } =0$. 
  Then $ \lim_{N \to \infty} \tau_c(L ) = \frac{1}{e^b -1 } $. 
  \end{proposition}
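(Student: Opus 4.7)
The plan is to deduce the claim directly from the explicit formula \eqref{tauc exp L}, which expresses the critical unbinding time as
\[
\tau_c(L) = \frac{1}{\exp\!\left(\frac{\log(M)+\log(L)}{N}\right)-1}.
\]
So the entire statement reduces to controlling the limit of the exponent inside the denominator. This is the same strategy used for the earlier results in Propositions \ref{lem:asympt tau_c LM order 1}, \ref{lem M order 1 and log(L) ll N} and \ref{lem:asympt tau_c LM less than expo}, and no further ingredient beyond \eqref{tauc exp L} is actually needed.

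First, I would decompose the exponent as $\frac{\log(M)}{N}+\frac{\log(L)}{N}$. By hypothesis, the first summand converges to $b$ and the second summand converges to $0$ as $N\to\infty$, so the sum converges to $b>0$. Since $b>0$ in particular, the denominator is bounded away from zero for $N$ large enough, so no degeneracy occurs and we may take the limit under the quotient.

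Next, by continuity of the exponential at $b$ we get $\exp\!\big(\frac{\log(M)+\log(L)}{N}\big)\to e^{b}$. Then continuity of $x\mapsto 1/(x-1)$ at $x=e^{b}\neq 1$ gives $\tau_c(L)\to\frac{1}{e^{b}-1}$, which is the claim.

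There is no substantive obstacle: the only thing one should explicitly check is that the exponent stays strictly bigger than $0$ for large $N$ (which follows from $b>0$ and $\frac{\log(L)}{N}\to 0$), so that $\exp\!\big(\frac{\log(M)+\log(L)}{N}\big)-1$ is uniformly bounded away from zero and the reciprocal is continuous at the limit point. Once this is noted, the proposition follows in a single line from \eqref{tauc exp L}.
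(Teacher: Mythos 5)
Your proof is correct and is essentially the paper's own argument: the paper states this proposition without a written proof, treating it (as with the neighbouring Propositions \ref{lem:asympt tau_c LM order 1} and \ref{lem:asympt tau_c LM less than expo}) as an immediate consequence of the explicit formula \eqref{tauc exp L}, which is exactly the route you take. Your added remark that the exponent stays bounded away from $0$ so the reciprocal is continuous at $e^{b}\neq 1$ is a sensible (if minor) point that the paper leaves implicit.
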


We obtain a similar behavior when $\log(M) \gg N $. In this case we consider all the possible behaviours of $L$ and in all the cases we obtain that $ \lim_{N \to \infty} \tau_c(L) =0$
\begin{proposition}\label{lem:asympt tau_c LM more than expo}
   Assume that $ \lim_{N \to  \infty } \frac{\log ( M) }{N } = \infty $. 
   \begin{enumerate}[(1)]
\item If $ \lim_{N \to \infty} \frac{\log(M)}{\log(L)}= 0  $, then $ \lim_{N \to \infty} \tau_c(L)  L^{1/N} =1 $ . 
\item If $ \lim_{N \to \infty} \frac{\log(M)}{\log(L)}= d >0  $, then $ \lim_{N \to \infty} (d+1) L^{1/N}\tau_c (L) =1$. 
\item If $ \lim_{N \to \infty} \frac{\log(M)}{\log(L)}=\infty$, then $ \lim_{N \to \infty} {M}^{1/N}\tau_c(L) =1 $. 
    \end{enumerate}

\end{proposition}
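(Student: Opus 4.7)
The plan is to apply the closed form \eqref{tauc exp L},
$$\tau_c(L) = \frac{1}{\exp\bigl((\log L + \log M)/N\bigr) - 1},$$
and to sort the three cases according to which of $\log L$ or $\log M$ controls the exponent as $N\to\infty$.

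Since the overall hypothesis $\log(M)/N \to \infty$ forces $(\log L + \log M)/N \to \infty$ in every case, the constant $-1$ is swallowed by the diverging exponential and one obtains the uniform leading behaviour $\tau_c(L) \sim (LM)^{-1/N}$ as $N\to\infty$. The three stated limits then come from identifying the dominant factor of $(LM)^{1/N} = L^{1/N} M^{1/N}$ in the appropriate sense.

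Concretely, in case (1) the relation $\log M = o(\log L)$ yields $\log(LM) \sim \log L$, whence $(LM)^{1/N} \sim L^{1/N}$ and the claim $\tau_c(L)\, L^{1/N} \to 1$ follows directly; case (3) is symmetric with $L$ and $M$ swapped. Case (2), where $\log M = d\log L + o(\log L)$, is the delicate one: here $\log(LM) \sim (d+1)\log L$ and one must expand carefully so as to extract the prefactor $(d+1)$ that appears in the statement, rather than allowing it to be absorbed into the exponent.

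The whole argument reduces to a Taylor expansion of the elementary expression \eqref{tauc exp L}, so I do not foresee any conceptual obstacle. The main care needed is in case (2), where both $\log L$ and $\log M$ diverge and one must control the $o(\log L)$ correction when substituting back into the exponential. The reasoning is parallel to that used in Propositions \ref{lem:asympt tau_c M less than expo} and \ref{lem:asympt tau_c LM less than expo}.
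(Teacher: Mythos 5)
Your reduction to the closed form and the observation that $(LM)^{1/N}\to\infty$ forces $\tau_c(L)\sim(LM)^{-1/N}$ is exactly the paper's starting point (its proof is the same one-liner followed by ``considering the dominant term''). But the dominant-term step, which you make explicit as ``$\log(LM)\sim\log L$, whence $(LM)^{1/N}\sim L^{1/N}$'', is precisely where the argument does not go through. Asymptotic equivalence of logarithms transfers to asymptotic equivalence of $\exp\bigl(\,\cdot\,/N\bigr)$ only when the \emph{difference} of the logarithms is $o(N)$; here $(LM)^{1/N}/L^{1/N}=M^{1/N}=e^{\log M/N}\to\infty$ under the standing hypothesis $\log M/N\to\infty$. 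So in case (1) your computation actually yields $\tau_c(L)\,L^{1/N}=M^{-1/N}(1+o(1))\to 0$, not $1$. Case (3) has the mirror problem: $\tau_c(L)\,M^{1/N}=L^{-1/N}(1+o(1))$, which tends to $1$ only under the extra assumption $\log L\ll N$, which is not part of the hypotheses.

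Case (2) cannot be completed as you describe either: from $\log(LM)=(1+d+o(1))\log L$ one gets $(LM)^{1/N}=L^{(1+d+o(1))/N}$, i.e.\ the factor $d+1$ lives in the exponent, and since $\log L/N\to\infty$ even the $o(1)$ there is not negligible; there is no mechanism producing a multiplicative constant $d+1$ in front of $L^{1/N}$. The only assertion your computation (and the paper's) actually establishes is $\lim_{N\to\infty}\tau_c(L)\,(LM)^{1/N}=1$. If you want the three itemized forms, you must either add hypotheses (e.g.\ $\log M/N\to 0$ in case (1), $\log L/N\to 0$ in case (3)) or restate the conclusions with the correction kept in the exponent; as written, the plan to ``Taylor expand and identify the dominant factor'' would not produce the claimed limits.
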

\begin{proof}
Since $ \lim_{N \to  \infty }  \frac{\log (L M) }{N } =\infty $, then \eqref{tauc exp L} implies 
\[ 
\tau_c\sim  \frac{1} {\exp\left(\frac{ \log({M})+ \log(L)}{N}\right)} \text{ as } N \to \infty. 
\]
The desired conclusion follows by considering the dominant term in the three cases. 
\end{proof}

\section{Energy consumption} \label{sec:energy}
In this section we compute the amount of molecules of ATP consumed by $ M $ kinetic proofreading trials when $L=1$. 
As in the previous sections, we consider separately the case of $M=1$, $M \approx 1 $ but $M >1 $, and of $M \gg 1 $ as $N \to \infty $. 
The number of molecules of ATP that are consumed will be described by a random variable. We compute the asymptotics of its probability distribution as $N \to \infty $ and we obtain a mixed distribution when $M \approx 1 $ and when $M=1 $. 
In particular, when $M \approx 1 $ and $M > 1 $ this distribution consists of a sum of a Dirac in $M$ with a sum of continuous distributions on a subinterval of $(0, M)$, see Figure \ref{fig4} for a plot of this distribution. 
Instead, when $M \to \infty $ the probability distribution of the random variable, describing the amount of ATP molecules consumed, will be approximated by a Gaussian distribution with mean $ \tau  M $ and variance $ \sqrt{ \tau(1+\tau) M}  $. 
\begin{figure}[H] 
\centering
\includegraphics[width=0.7\linewidth]{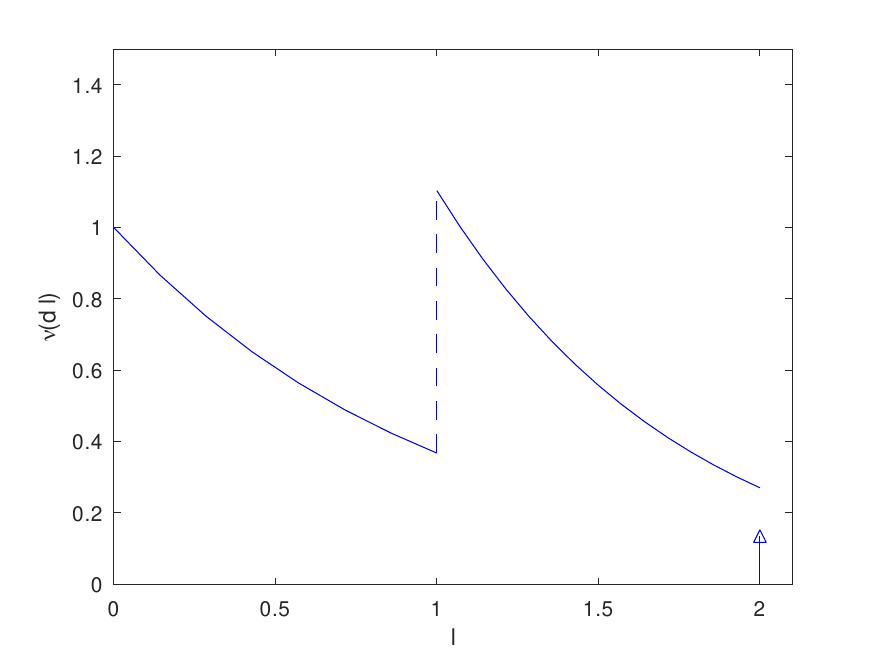}
\caption{ Here is the plot of \eqref{eq:mixed density M >1} when $M=2 $ and $x=1$. In the figure we use the notation $ \uparrow$ to indicate the Dirac in $\ell=2$ weighted by $e^{-2}$. 
}
\label{fig4}
\end{figure}

\subsection{Energy consumption during one kinetic proofreading trial}
In order to keep track of the number of molecules of ATP that are consumed in one kinetic proofreading trial, we introduce the variables $\{ m_k(t) \}_{k=0}^{N}$, describing the probability that a ligand detaches from the receptor after $k$ phosphorylation events in the time interval $(0,t)$, hence $k$ ATP molecules are consumed by the ligand.

By their interpretations the functions $\{ m_k\}_{k=0}^{N}$ satisfy the following system of ODEs 
\begin{align} \label{ODE consumption of ATP}
    \frac{d m_k}{dt} (t) &= \frac{1}{\tau } n_{k+1}(t), \quad \text{ for every }  k \in \{ 0, \dots, N-1 \}, \quad t >0  
\end{align}
and $m_N(t)= R(t)$, with initial conditions $m_k(0)=0$ for every $k \in \{0, \dots, N \} $ and where $n_k$ are the solutions of \eqref{ODE kpr} with initial condition $n_0 = e_1 $. 
\begin{lemma} 
Let $\tau >0$, $N \geq 1$. 
Let $ \{ m_k (t) \}_{k=1}^N $ be the solution of \eqref{ODE consumption of ATP}. Then  
    \begin{equation} \label{m_k}
 m_k(\infty ) := \lim_{ t \rightarrow \infty } m_k(t)= \frac{1}{1+\tau } \left( \frac{\tau }{1+\tau }\right)^k, \text{ for every } k \in \{0, \dots, N -1 \}. 
\end{equation}
Moreover 
\[
m_N(\infty):=  \lim_{ t \rightarrow \infty } R (t)= \left( \frac{\tau }{1+\tau } \right)^N. 
\]
\end{lemma}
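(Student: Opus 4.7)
The plan is to substitute the closed-form expressions for $n_k(t)$ from Lemma \ref{lem:solution ODE} into the ODE system \eqref{ODE consumption of ATP} for $m_k$ and then evaluate the resulting integrals explicitly. This reduces the lemma to a one-line Gamma-integral computation in each case.

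More precisely, for $k \in \{0,\dots,N-1\}$, I would integrate \eqref{ODE consumption of ATP} from $0$ to $\infty$ using $m_k(0)=0$ to get
\[
m_k(\infty) \;=\; \frac{1}{\tau}\int_0^\infty n_{k+1}(t)\,dt.
\]
Plugging in \eqref{n_i} shows that (up to the prefactor $1/\tau$) the integrand is proportional to an Erlang density with shape $k+1$ and rate $\lambda=(1+\tau)/\tau$. Applying the standard identity $\int_0^\infty t^k e^{-\lambda t}\,dt = k!/\lambda^{k+1}$ and simplifying yields exactly $\frac{1}{1+\tau}\left(\frac{\tau}{1+\tau}\right)^k$, which is \eqref{m_k}. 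The only bookkeeping subtlety is that \eqref{n_i} is stated for $k\ge 2$, so the case $k=0$ needs $n_1(t)=e^{-t(1+\tau)/\tau}$ treated separately, but it gives $m_0(\infty)=1/(1+\tau)$, in agreement with the general formula.

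For $m_N(\infty)$, I would simply quote that $m_N(t)=R(t)$ by construction and that $R(\infty)=\int_0^\infty n_N(t)\,dt=p(\tau)=\left(\tau/(1+\tau)\right)^N$ by the proposition immediately preceding the statement.

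As a sanity check I would verify the probability conservation identity $\sum_{k=0}^{N} m_k(\infty)=1$, reflecting the dichotomy that the trajectory either leaves the system by detachment at some state $k\in\{0,\dots,N-1\}$ (consuming $k$ ATP molecules) or reaches state $N$ and triggers a response. The geometric sum $\sum_{k=0}^{N-1}\frac{1}{1+\tau}\left(\frac{\tau}{1+\tau}\right)^k$ telescopes to $1-\left(\frac{\tau}{1+\tau}\right)^N$, which together with $m_N(\infty)$ yields $1$, confirming the formulas. There is no serious obstacle here: once one recognizes the Erlang structure of $n_k(t)$, everything reduces to a direct substitution and a standard integral.
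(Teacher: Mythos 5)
Your proof is correct, and in substance it coincides with the paper's: both arguments reduce the claim to evaluating $\tfrac{1}{\tau}\int_0^\infty n_{k+1}(t)\,dt$ and then quoting $R(\infty)=\int_0^\infty n_N(t)\,dt=p(\tau)$ for the last state. The only difference is cosmetic. The paper passes to Laplace transforms, writing $\hat m_k(z)=\tfrac{1}{\tau z}\bigl(\tfrac{1}{1+z+1/\tau}\bigr)^{k+1}$ and invoking the final value theorem $m_k(\infty)=\lim_{z\to 0}z\hat m_k(z)$, which is exactly the statement that $m_k(\infty)=\tfrac{1}{\tau}\hat n_{k+1}(0)=\tfrac{1}{\tau}\int_0^\infty n_{k+1}(t)\,dt$. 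You instead integrate the ODE directly in the time domain and evaluate the resulting Gamma integral $\int_0^\infty t^k e^{-\lambda t}\,dt=k!/\lambda^{k+1}$ with $\lambda=(1+\tau)/\tau$; this is more elementary and avoids appealing to the final value theorem (whose hypotheses one would otherwise have to check), at the cost of nothing. Your handling of the boundary case $k=0$ via $n_1(t)=e^{-t(1+\tau)/\tau}$ and your normalization check $\sum_{k=0}^N m_k(\infty)=1$ are both sound; the latter does not appear in the paper but is a worthwhile consistency verification of the detachment/response dichotomy.
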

\begin{proof}

For every $k \in \{ 0, \dots,  N-1\} $ we have that 
\[
\hat{m}_k(z) = \frac{1}{\tau} \hat{n}_{k+1}(z)= \frac{1}{\tau z } \left( \frac{1}{1+z +\frac{1}{\tau}}\right)^{k+1} \text{ for }  z \in \mathbb C,
\]
where $\hat{m}_k$ is the Laplace transform (see \cite{feller1991introduction} for the definition of Laplace transform) of $m_k$ and $\hat{n}_k$ is the Laplace transform of $n_k$. 

Using the final value theorem for Laplace transforms (\cite{feller1991introduction}) we deduce that for every $k \in \{0, \dots, N-1 \} $
\begin{align*} 
m_k(\infty)= \lim_{ t \rightarrow \infty } m_k(t)= \lim_{ z \rightarrow 0 } z \hat{m}_k(z) = 
 \frac{1}{1+\tau } \left( \frac{\tau }{1+\tau }\right)^k. 
\end{align*} 
Moreover, 
\[
R(\infty)=  \lim_{ t \rightarrow \infty } R (t)=\int_0^\infty n_N(t) dt =  \left( \frac{\tau }{1+\tau } \right)^N. 
\]
\end{proof}

Notice that $ \{ m_k (\infty) \}_{k=0}^{N}$ are the probabilities of consuming $k $ molecules of ATP during one kinetic proofreading trial with $N$ steps. 
We want now to compute the asymptotics of the probabilities $ \{ m_k (\infty) \}_{k=0}^{N}$ as $N \to \infty$.  

\begin{theorem} \label{thm: energy cons M=1}
    Let $\{ m_k (\infty) \}_{k=0}^N $ be given by \eqref{m_k} as a function of $\tau= N x $ for $x >0$. 
    Then
    \[
    \mu_N  \rightharpoonup \mu  \text{ as } N \rightarrow \infty
    \]
    where $\mu_N $ and $\mu$ are the probability measures on $[0,1]$
    \[ 
    \mu_N (d \ell ) :=  \sum_{k=0}^N m_k (\infty) \delta\left(\ell - \frac{k}{N} \right) 
    \] 
     for every $N \geq 1 $ and 
     \begin{equation} \label{eq:mixed density M=1}
    \mu (d \ell) :=  \frac{e^{- \frac{\ell }{x} }}{x} \chi_{[0,1) } (  \ell) d \ell + e^{- \frac{\ell }{x} }  \delta (\ell -1)
     \end{equation}
\end{theorem}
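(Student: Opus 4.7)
The plan is to establish the weak convergence $\mu_N \rightharpoonup \mu$ directly from the definition: for every $\varphi \in C([0,1])$, show that $\int \varphi\, d\mu_N \to \int \varphi\, d\mu$. The key idea is to split the discrete measure $\mu_N$ into the bulk contribution from the indices $k=0,\dots,N-1$ (which should produce the absolutely continuous part of $\mu$ via a Riemann sum argument) and the boundary contribution at $k=N$ (which should produce the Dirac mass at $\ell=1$).

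First, I would treat the boundary term. Using the explicit formula for $m_N(\infty) = \left(\tau/(1+\tau)\right)^N$ with $\tau = Nx$, write
\[
m_N(\infty)\,\varphi(1) = \left(\frac{Nx}{1+Nx}\right)^N \varphi(1) = \left(1-\frac{1}{1+Nx}\right)^N \varphi(1),
\]
and pass to the limit $N \to \infty$ to obtain $e^{-1/x}\varphi(1)$, which matches the Dirac contribution $e^{-1/x}\delta(\ell-1)$ in \eqref{eq:mixed density M=1}.

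Next, for the bulk contribution, I would rewrite the sum for $k=0,\dots,N-1$ as a Riemann sum by inserting the grid size $1/N$:
\[
\sum_{k=0}^{N-1} m_k(\infty)\,\varphi\!\left(\frac{k}{N}\right) = \frac{1}{N}\sum_{k=0}^{N-1} f_N\!\left(\frac{k}{N}\right)\varphi\!\left(\frac{k}{N}\right),\qquad f_N(\ell) := \frac{N}{1+Nx}\left(\frac{Nx}{1+Nx}\right)^{N\ell}.
\]
The prefactor satisfies $\frac{N}{1+Nx} \to \frac{1}{x}$, and expanding $N\ell \log(1-1/(1+Nx)) = -\ell/x + O(1/N)$ uniformly in $\ell \in [0,1]$ shows that $f_N(\ell) \to \frac{1}{x}e^{-\ell/x}$ uniformly on $[0,1]$. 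Since $f_N$ is uniformly bounded and $\varphi$ is continuous on a compact interval, the Riemann sum converges to $\int_0^1 \frac{1}{x}e^{-\ell/x}\varphi(\ell)\,d\ell$, which matches the absolutely continuous part of $\mu$.

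Adding the two contributions, one obtains $\int \varphi\,d\mu_N \to \int \varphi\,d\mu$ for every $\varphi \in C([0,1])$, i.e.~$\mu_N \rightharpoonup \mu$. The main point that needs care is the uniform estimate on $f_N$: the logarithm expansion is benign away from $\ell=0$, but one should check that no mass leaks at the endpoints of the interval. This is handled by noting that $\int_0^1 \frac{1}{x}e^{-\ell/x}\,d\ell + e^{-1/x} = 1$, so that the limit $\mu$ is itself a probability measure, and hence no mass is lost in the limit (a useful sanity check is to compare the total masses before taking limits). The computation also illustrates why the endpoint $k=N$ must be singled out: in that case the factor $\frac{1}{1+\tau}$ is absent from $m_k(\infty)$, which is precisely what prevents that term from being absorbed into the Riemann sum and forces the appearance of a Dirac atom of mass $e^{-1/x}$ at $\ell=1$.
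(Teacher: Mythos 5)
Your proposal is correct and follows essentially the same route as the paper: split off the $k=N$ term to produce the atom $e^{-1/x}\delta(\ell-1)$, and treat the sum over $k=0,\dots,N-1$ as a Riemann sum converging to the density $\frac{1}{x}e^{-\ell/x}$ on $[0,1)$. Your uniform expansion of $N\ell\log\bigl(Nx/(1+Nx)\bigr)$ and the total-mass sanity check are slightly more explicit than the paper's argument, but the substance is identical.
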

  For the proof of this theorem we refer to Appendix \ref{app:energy M=1}. 
  Notice that Theorem \ref{thm: energy cons M=1} has a simple interpretation. 
  If the attachment of a ligand to the receptor triggers a response, then the number of molecules of ATP spent by the KPR network is equal to $N $. 
  On the other hand, if the attachment of the ligand to the receptor does not yield a response, then the distribution of $k/N $ converges to a Poisson distribution restricted to $ k /N < 1$. 
  
\subsection{Energy consumption when $M$ is of order one and $N \to \infty$}
We now aim at computing the probabilities $ \{ Q_k \}_{k=0}^{NM} $ of producing $k $ molecules of ATP in $M>1 $ kinetic proofreading trials, for large $N$. 
In this section, we study the case in which $M \approx 1 $ as $N \to \infty $. 

Let $\mathcal E_N$ be the random variable defined as 
\begin{equation} \label{random variable ATP consumption}
\mathcal E_N= \sum_{\ell=1 }^M k_\ell,
\end{equation}
where $ \{ k_\ell \}_{\ell=1}^M  $ is a set of independent random variables. 
In particular, the random variables $k_\ell$ have probability distribution $ \{ m_j(\infty) \}_{j=1}^{N}$.
The random variable $\mathcal E_N $, then describes the total amount of ATPs used during $M $ trials. The probability distribution of $\mathcal E_N$ is 
\begin{equation} \label{Q_k}
Q_k  = \left( m_1(\infty) * m_2(\infty ) * \dots *  m_M(\infty ) \right) [k] 
\end{equation}
where 
\[
(m_1(\infty) * m_2(\infty )) [k] := \sum_{j=0}^k m_j (\infty) m_{k-i}(\infty ). 
\]

\begin{theorem} \label{thm:energy M>1}
 Let $\{ m_k (\infty) \}_{k=0}^{N} $ be given by \eqref{m_k} as a function of $\tau= N x $ for $x >0$. 
Assume that $  M >1$ is constant. 
    Then
    \[
    \nu_N  \rightharpoonup \nu \text{ as } N \rightarrow \infty
    \]
    where $\nu_N$ is given by 
       \begin{equation} \label{nu_N}
    \nu_N (d \ell ) :=  \sum_{k=0}^N  Q_k \delta\left(\ell - \frac{k}{N} \right). 
    \end{equation}
    where $ \{ Q_k\}_{k=1}^{MN} $ are given by \eqref{Q_k} as a function of $\{ m_k (\infty) \}_{k=0}^{N} $  and  
     \begin{equation}\label{eq:mixed density M >1}
    \nu (d \ell) :=  e^{- \frac{M}{x}} \delta(\ell - M )+  \frac{ e^{- \frac{\ell}{x}}}{x} \sum_{k=1}^M \binom{M}{k}  \sum_{ j=0}^k \binom{k }{j} (-1)^{k-j+1}  \frac{1}{(k-1)!} \left( \frac{\ell - M  + j }{x} \right)^{k-1} \chi_{[0, M-j ]  } ( \ell)  d \ell . 
     \end{equation}
\end{theorem}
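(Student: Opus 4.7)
The plan is to reduce the statement to the weak continuity of convolution (for a fixed $M$) and then compute the $M$-fold convolution of the limit measure $\mu$ explicitly. Since $\mathcal E_N = \sum_{\ell=1}^M k_\ell$ with iid summands whose laws are exactly $\{m_j(\infty)\}_{j=0}^{N}$, the rescaled variable $\mathcal E_N/N$ has law $\nu_N = \mu_N^{*M}$. Theorem \ref{thm: energy cons M=1} gives $\mu_N \rightharpoonup \mu$, and since $M$ is constant the continuous mapping theorem applied to the addition map $\mathbb R^M \to \mathbb R$ (equivalently, Lévy's continuity theorem for characteristic functions) yields $\nu_N = \mu_N^{*M} \rightharpoonup \mu^{*M}$. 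What remains is to identify $\mu^{*M}$ with the measure $\nu$ of \eqref{eq:mixed density M >1}.

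To compute $\mu^{*M}$, I would decompose $\mu = p\,\delta_1 + \rho(\ell)\,d\ell$ with $p=e^{-1/x}$ and $\rho(\ell)=\tfrac{1}{x}e^{-\ell/x}\chi_{[0,1)}(\ell)$. Binomial expansion gives
\begin{equation*}
\mu^{*M} = p^M\delta_M + \sum_{k=1}^M \binom{M}{k} p^{M-k}\,\delta_{M-k} * \rho^{*k},
\end{equation*}
so the pure Dirac term already accounts for the mass $e^{-M/x}$ at $\ell=M$ in \eqref{eq:mixed density M >1}. The remaining work is to find a closed form for $\rho^{*k}$.

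For the convolution powers, I would write $\rho = g - \tilde g$ where $g(\ell)=\tfrac{1}{x}e^{-\ell/x}\chi_{[0,\infty)}(\ell)$ is the full exponential density and $\tilde g(\ell)=g(\ell)\chi_{[1,\infty)}(\ell)$. The key observation is that $\tilde g = e^{-1/x}(\delta_1 * g)$, so using the commutativity of convolution with shifts together with the standard Erlang identity $g^{*k}(\ell) = \tfrac{\ell^{k-1}}{x^k(k-1)!}e^{-\ell/x}$ and the binomial theorem for convolutions yields
\begin{equation*}
\rho^{*k}(\ell)= \frac{e^{-\ell/x}}{x^k(k-1)!}\sum_{j=0}^{k}\binom{k}{j}(-1)^j (\ell-j)^{k-1}\chi_{\{\ell\geq j\}}.
\end{equation*}
Substituting this into the binomial expansion of $\mu^{*M}$, shifting the argument by $M-k$, and reindexing $j \mapsto k-j$ produces a formula identical to \eqref{eq:mixed density M >1} except that the indicator $\chi_{\{\ell\geq M-j\}}$ appears in place of $\chi_{[0,M-j]}(\ell)$.

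The final and only nontrivial step is to reconcile the two indicator families. The difference between the two expressions is, up to a Lebesgue-null set, the coefficient of $\chi_{\{\ell\geq M-j\}}+\chi_{[0,M-j]}(\ell)=1$, which after summation in $j$ produces
\begin{equation*}
\sum_{j=0}^{k}\binom{k}{j}(-1)^{k-j}(\ell-M+j)^{k-1}.
\end{equation*}
This is the $k$-th forward finite difference at $0$ of the polynomial $j\mapsto(\ell-M+j)^{k-1}$ of degree $k-1<k$, hence it vanishes identically in $\ell$. I expect this rearrangement to be the main (though elementary) obstacle: the natural expression coming from inclusion–exclusion uses right-tail indicators $\chi_{\{\ell\geq M-j\}}$, while \eqref{eq:mixed density M >1} is written with left-tail indicators $\chi_{[0,M-j]}(\ell)$, and the finite-difference identity above is precisely what converts one into the other without changing the measure.
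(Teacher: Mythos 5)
Your argument is correct, and it takes a genuinely different route from the paper's. The paper passes to Laplace transforms: it uses the duality between weak convergence of measures and locally uniform convergence of Laplace transforms to get $\hat\nu_N=(\hat\mu_N)^M\to(\hat\mu)^M$, and then identifies $\nu$ by inverting $(\hat\mu)^M$ through contour integration, with the residue of order $k$ at the pole $z=-1/x$ producing the Erlang-type factors $\frac{1}{(k-1)!}\bigl(\frac{\ell-M+j}{x}\bigr)^{k-1}$ and the contour deformations $\Re(z)\to\pm\infty$ supplying the case analysis that becomes your indicator functions. You instead stay entirely in real space: the continuous mapping theorem gives $\mu_N^{*M}\rightharpoonup\mu^{*M}$ for fixed $M$, and the factorization $\rho=(\delta_0-e^{-1/x}\delta_1)*g$ together with the Erlang identity $g^{*k}(\ell)=\frac{\ell^{k-1}}{x^k(k-1)!}e^{-\ell/x}$ yields $\mu^{*M}$ by two binomial expansions. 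Your approach is more elementary (no complex analysis), and it makes structurally transparent both where the Dirac mass $e^{-M/x}\delta_M$ comes from and why the density vanishes outside $[M-k,M]$ in the $k$-th term; the price is the final bookkeeping step reconciling the right-tail indicators $\chi_{\{\ell\geq M-j\}}$ your inclusion–exclusion produces with the left-tail indicators $\chi_{[0,M-j]}$ in \eqref{eq:mixed density M >1}, which you correctly dispose of (up to a Lebesgue-null set of endpoints, which is all that is needed to identify the measure) via the vanishing of the $k$-th finite difference of the degree-$(k-1)$ polynomial $j\mapsto(\ell-M+j)^{k-1}$. The two computations are of course the same in disguise — the paper's residue calculus at $z=-1/x$ is exactly the Laplace-side shadow of your Erlang convolution — but your version is self-contained and arguably easier to check.
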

Notice that, for $M=1$ we have that \eqref{eq:mixed density M >1} is equal to \eqref{eq:mixed density M=1}. For the proof of this theorem we refer to Appendix \ref{app:energy M>1}.

\subsection{Energy consumption when $M\to \infty$ as $N \to \infty$} 

We aim now at computing the probabilities $ \{ Q_k \}_{k=0}^N $ of producing $k $ molecules of ATP in $M>1 $ kinetic proofreading trials, for large $N$. 
In this section, we study the case in which $M \to \infty $ as $N \to \infty $ and we study the case in which the unbinding time is of the form $   \tau= \tau_c \left( 1+ \frac{x}{N} (1+\tau_c) \right)$ for some constant $x$. 
In particular we will prove the following theorem. 
\begin{theorem} \label{thm:gaussian ATP}
Let $\{ m_k (\infty) \}_{k=0}^N $ be given by \eqref{m_k} as a function of 
\begin{equation} \label{tau M to infty}
    \tau= \tau_c \left( 1+ \frac{x}{N} (1+\tau_c) \right), 
    \end{equation}
    where $ x\in \mathbb R$ and $\tau_c $ is given by \eqref{tauc exp}. 
Assume that $\lim_{N \to \infty} M = \infty $.
Then the random variable $\mathcal E_N $ defined in \eqref{random variable ATP consumption} is such that 
    \begin{equation}\label{normal approx energy}
    \frac{\mathcal E_N - \tau M }{ \sqrt{ \tau (1+\tau) M} } \overset{d}{\rightarrow}\mathcal N (0,1)  \text{ as } N \rightarrow \infty, 
    \end{equation}
    where $\mathcal N (0,1)$ is the normal distribution with mean $0 $ and standard deviation $1$. 
\end{theorem}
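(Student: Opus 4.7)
The plan is to couple $\mathcal{E}_N$ with a sum of i.i.d.\ untruncated geometric random variables and apply a triangular-array central limit theorem, then control the truncation error. First I would observe that the discrete law $\{m_k(\infty)\}_{k=0}^{N}$ given by \eqref{m_k} is precisely that of $\min(\tilde{k},N)$, where $\tilde{k}$ is geometric on $\{0,1,2,\ldots\}$ with success probability $p=1/(1+\tau)$, so that $\mathbb{E}[\tilde{k}]=\tau$ and $\mathrm{Var}(\tilde{k})=\tau(1+\tau)$. Let $\{\tilde{k}_\ell\}_{\ell=1}^M$ be i.i.d.\ with this law, set $k_\ell=\min(\tilde{k}_\ell,N)$, and write $\tilde{\mathcal{E}}_N=\sum_{\ell=1}^M \tilde{k}_\ell$. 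On this coupling, $\mathcal{E}_N=\tilde{\mathcal{E}}_N-D_N$ with $D_N=\sum_{\ell=1}^M(\tilde{k}_\ell-N)^{+}\ge 0$.

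Second, I would apply Lyapunov's CLT with third absolute moments to $\tilde{\mathcal{E}}_N$. A short computation for the geometric law yields $\mathbb{E}[(\tilde{k}-\tau)^3]=\tau(1+\tau)(1+2\tau)$, and hence $\mathbb{E}[|\tilde{k}-\tau|^3]\le C\,\tau(1+\tau)^{2}$. The Lyapunov ratio therefore satisfies
\[
\frac{M\,\mathbb{E}[|\tilde{k}-\tau|^3]}{(M\tau(1+\tau))^{3/2}}\;\le\;C\,\sqrt{\frac{1+\tau}{M\tau}}.
\]
It remains to check this vanishes in each of the three regimes of Proposition~\ref{lem:asympt tau_c M less than expo}: when $\log(M)\ll N$ one has $\tau\sim N/\log(M)\to\infty$ and the bound is $O(M^{-1/2})$; when $\log(M)\approx N$ one has $\tau\approx 1$ and the bound is again $O(M^{-1/2})$; when $\log(M)\gg N$ one has $\tau\to 0$ but $M\tau\sim M^{1-1/N}\to\infty$, so the bound still tends to zero. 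This yields $(\tilde{\mathcal{E}}_N-M\tau)/\sqrt{M\tau(1+\tau)}\overset{d}{\to}\mathcal{N}(0,1)$.

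Third, I would show $D_N=o_{P}\!\bigl(\sqrt{M\tau(1+\tau)}\bigr)$ and conclude via Slutsky. By the memoryless property of the geometric law, conditional on $\tilde{k}\ge N$ the variable $\tilde{k}-N$ is again geometric with mean $\tau$, so $\mathbb{E}[(\tilde{k}-N)^{+}]=p(\tau)\tau$ and $\mathbb{E}\bigl[((\tilde{k}-N)^{+})^{2}\bigr]=p(\tau)\tau(1+2\tau)$. Consequently $\mathbb{E}[D_N]=Mp(\tau)\tau$ and $\mathrm{Var}(D_N)\le Mp(\tau)\tau(1+2\tau)$. Under the rescaling \eqref{tau M to infty}, the computation in the proof of Theorem~\ref{thm: M >>1} already shows that $Mp(\tau)\to e^{x}$, so $Mp(\tau)=O(1)$. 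Hence $\mathbb{E}[D_N]=O(\tau)$ and $\sqrt{\mathrm{Var}(D_N)}=O\!\bigl(\sqrt{\tau(1+\tau)}\bigr)$, both of which are $o\!\bigl(\sqrt{M\tau(1+\tau)}\bigr)$ because $M\to\infty$; Chebyshev's inequality yields $D_N/\sqrt{M\tau(1+\tau)}\to 0$ in probability, and Slutsky's theorem transfers the CLT to $\mathcal{E}_N$.

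The main technical point is the uniformity of the Lyapunov estimate across the three regimes for $\tau_c$, in particular the regime $\log(M)\gg N$ where $\tau\to 0$ and one must verify that $M\tau$ still diverges. The truncation estimate is, by contrast, uniform once the critical-scaling identity $Mp(\tau)=O(1)$, already inherent in \eqref{tau M to infty}, is invoked.
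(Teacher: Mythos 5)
Your proof is correct, but it takes a genuinely different route from the paper's. You identify $\{m_k(\infty)\}_{k=0}^N$ as the law of a geometric random variable with mean $\tau$ and variance $\tau(1+\tau)$ truncated at $N$, couple $\mathcal E_N$ to the untruncated sum $\tilde{\mathcal E}_N$, prove a triangular-array CLT for $\tilde{\mathcal E}_N$ via Lyapunov, and dispose of the truncation defect $D_N$ by Chebyshev and Slutsky using the critical-scaling identity $Mp(\tau)\to e^x$. The paper instead works on the Fourier/Laplace side: it computes the generating function $G(z)=\frac{z_0-1}{z_0-z}\bigl(1+(z/z_0)^N\tau(1-z)\bigr)$, shows (Lemma~\ref{lem:asympt of G}) that the truncation correction raised to the power $M$ tends to $1$ near $z=1$ — the exact analogue of your $D_N=o_P(\sqrt{M\tau(1+\tau)})$ step, resting on the same fact $Mp(\tau)=O(1)$ — and then extracts $Q_k$ by contour integration of $G_\infty(z)^M/z^{k+1}$, obtaining a negative-binomial coefficient whose Stirling asymptotics give a Gaussian \emph{local} limit theorem (pointwise asymptotics of $Q_k$), which is strictly more information than the claimed convergence in distribution. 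Your argument is more elementary, avoids complex analysis entirely, and makes the uniformity across the three regimes for $\tau_c$ transparent; it buys exactly the stated conclusion and no more. One small point to tighten: the inequality $\mathbb E[|\tilde k-\tau|^3]\le C\tau(1+\tau)^2$ does not follow from the formula for the third \emph{central} moment as your ``hence'' suggests (and routing through the fourth moment is not sharp enough for small $\tau$); it is, however, true uniformly in $\tau>0$, e.g.\ via $|\tilde k-\tau|^3\le 4(\tilde k^3+\tau^3)$ and $\mathbb E[\tilde k^3]=6\tau^3+6\tau^2+\tau\le 7\tau(1+\tau)^2$, so the Lyapunov ratio bound $\sqrt{(1+\tau)/(M\tau)}$ stands.
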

Notice that we are using the notation $\overset{d}{\rightarrow} $ to refer to the convergence in distribution. 
  For the proof of this theorem we refer to Appendix \ref{app:energy M>>1}.

\section{Speed}
\label{sec:time} 
In this section we study the speed of the kinetic proofreading model introduced in Section \ref{sec:model}. 
In particular we study $T_{receptor} $, that is the total quantity of time that a ligand spends attached to the receptor in order to produce a response. In this computation we neglect the time that the ligand takes to attach to the receptor. 
We start by analysing the case of $M=1 $ in Section \ref{sec:time M=1} and later we restrict to the case of $M \approx e^{ b N } $ as $N \to \infty $, see Section \ref{sec:time M exp}. 

\subsection{Speed of one kinetic proofreading trial ($M=1$)}
\label{sec:time M=1}
We recall that $ R(t) $ is the probability of having a response in the time interval $(0, t) $, see Section \ref{sec:model}. Therefore the probability of having a response at time $ t >0 $ is given by $\frac{d}{dt} R(t) = n_N(t)$, because $R(t) $ is given by \eqref{R}.  
We compute the conditional probability density $\Psi_R $ of having a response at time $t>0 $, conditioned to the fact that the ligand triggers a reaction, which is given by equality \eqref{p(tau)}. 
Then the definition of conditional probability implies that 
\begin{align} \label{psiR}
\Psi_R(t) =  \frac{ \frac{d}{dt} R(t) }{ p(\tau) } =   n_N(t) \left(\frac{1+\tau }{ \tau } \right)^N  . 
\end{align} 
In the following Lemma we prove that, if $N$ is fixed, then $\Psi_R $ is an Erlang distribution of parameters $N$ and $\frac{1+\tau}{\tau }$.
\begin{lemma} \label{lem:KPR time}
Let $N > 1$ and $\tau >0$. 
The conditional probability density $ \Psi_R(t)$ defined as \eqref{psiR} is an Erlang distribution of parameters $N$ and $\frac{1+\tau}{\tau }$. 
Namely $\Psi_R(t)= E(t; N, \frac{1 + \tau }{\tau })$ where $E$ is the Erlang probability distribution defined in \eqref{erlang} and 
\begin{equation} \label{average time}
\int_0^\infty t \Psi_R(t) dt= \frac{N \tau }{  1+\tau }, \text{ and } \int_0^\infty  \Psi_R(t) dt= 1 
\end{equation}
\end{lemma}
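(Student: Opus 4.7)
The plan is to simply substitute the explicit formula for $n_N$ derived in the earlier Lemma on the solution of the ODE system and recognize the resulting expression as an Erlang density.

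First I would recall that, from the computation of the solution of \eqref{ODE kpr}, we have
\[
n_N(t) = \frac{t^{N-1}}{(N-1)!}\, e^{-t(1+\tau)/\tau}, \quad t>0.
\]
Combining this with the identity $p(\tau) = \left(\frac{\tau}{1+\tau}\right)^N$ and with the definition of $\Psi_R$ in \eqref{psiR}, I would directly compute
\[
\Psi_R(t) = n_N(t)\left(\frac{1+\tau}{\tau}\right)^N = \frac{\left(\frac{1+\tau}{\tau}\right)^N t^{N-1} e^{-t(1+\tau)/\tau}}{(N-1)!},
\]
and compare this against the Erlang density in \eqref{erlang} with parameters $n=N$ and $\theta = (1+\tau)/\tau$ to conclude $\Psi_R(t) = E(t; N, (1+\tau)/\tau)$.

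Next, for the two integral identities in \eqref{average time}, I would invoke the standard facts about the Erlang distribution: an Erlang$(n,\theta)$ density integrates to one and has mean $n/\theta$. Applied with $\theta = (1+\tau)/\tau$ and $n=N$, the mean becomes $N\tau/(1+\tau)$, which is exactly the first claim, and normalization gives the second claim. Equivalently, both facts can be derived directly by the change of variables $s = t(1+\tau)/\tau$ and the identity $\int_0^\infty s^{k}e^{-s}ds = k!$.

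There is no real obstacle here: the statement is essentially a rewriting of the explicit formula for $n_N(t)$ plus a recognition step, followed by two elementary moment computations for the Erlang distribution. The only mild subtlety is verifying that the normalization factor $(1+\tau)^N/\tau^N$ coming from dividing by $p(\tau)$ lines up correctly with the prefactor $\theta^N$ appearing in the Erlang density of parameter $\theta = (1+\tau)/\tau$, which it clearly does.
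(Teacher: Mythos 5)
Your proof is correct and follows essentially the same route as the paper: substitute the explicit formula for $n_N(t)$ from Lemma \ref{lem:solution ODE}, divide by $p(\tau)=\left(\frac{\tau}{1+\tau}\right)^N$, recognize the Erlang density with parameters $N$ and $\frac{1+\tau}{\tau}$, and read off the normalization and the mean $N\tau/(1+\tau)$ from standard Erlang facts. No gaps.
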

\begin{proof}
Since the solution $(n_1(t), \dots n_N(t), R(t) )$ of \eqref{ODE kpr} is such that $n_N$ is given by \eqref{n_N erlang}. As a consequence then we have that 
\[ \int_0^\infty n_N(t) dt = \left( \frac{\tau }{1+\tau }\right)^N .
\] 
    As a consequence 
    \[
    \Psi_R(t)  = E\left(t;  N ,  \frac{1+\tau }{ \tau }\right).
    \]
    Equality \eqref{average time} follows by the fact that the Erlang distribution of parameters $k$ and $\lambda$ has mean $k / \lambda $. 
\end{proof}

 \subsection{Speed of $M\approx e^{b N } $ kinetic proofreading trials}
 \label{sec:time M exp}
Let $\{ n_k \}_{k=1}^N$ be as in Lemma \ref{lem:solution ODE}. Then let
\begin{equation} \label{eq:s}
s(t) := \frac{1}{\tau } \sum_{k=1}^{N} n_k (t); \quad r(t):= n_N(t) 
\end{equation}
Since for each $ 1 \leq k \leq N $ we have that $n_k(t) $ is the probability that a complex reached the state $k $ in the time interval $(0,t) $ then $s(t) $ is the probability that the ligand detaches before producing $R$. 
Instead $r(t) $ is the probability of a response before time $t >0 $. 

Then the probability density of having a response in the time interval $[t, t+dt]$ conditioned to having a response is given by $\Psi_M (t) dt $ where
\begin{equation} \label{psi_M}
\Psi_M(t) := \frac{ \sum_{j=1}^M \overbrace{s(t)* \cdots * s(t) * s(t)}^{j-1 \text{ times }} * r(t) }{  p_M(\tau)  }
\end{equation}
where $p_M $ is given by \eqref{p(tau)_M}. 

We now state the following lemma describing the asymptotic behaviour of the Laplace transform of \eqref{psi_M}. 
\begin{proposition}
     \label{lem:scaled time}
     Assume that $\lim_{N \to \infty} M=\infty $ and that $\lim_{N \to \infty} \frac{\log(M) }{N}= b >0$. 
Assume that 
\[
\tau = \tau_c \left(  1+ \frac{\xi (1+\tau_c)}{N }\right)
\] 
for $\xi \in \mathbb R$. 
Then 
\begin{equation} \label{asympt psi_M}
\lim_{N \to \infty } \hat{\Psi}_M \left( \frac{\zeta }{ \tau M } \right)  = \frac{1- e^{- \zeta} e^{- e^{\xi }}}{ 1 - e^{- e^{\xi }} } \cdot \frac{e^{ \xi }}{\zeta + e^{\xi }},\ \zeta \in \mathbb C. 
\end{equation}
\end{proposition}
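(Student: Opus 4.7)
The plan is to compute $\hat{\Psi}_M$ explicitly by exploiting the convolution structure, and then pass to the limit in the scaled variable $z=\zeta/(\tau M)$.

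First I would compute the Laplace transforms of $s$ and $r$ in closed form. By Lemma 2.2, $\hat{n}_k(z)=(z+a)^{-k}$ where $a=(1+\tau)/\tau$. Summing the geometric series gives
\begin{equation*}
\hat{s}(z)=\frac{1}{\tau}\sum_{k=1}^N (z+a)^{-k}=\frac{1-(z+a)^{-N}}{1+\tau z},\qquad \hat{r}(z)=(z+a)^{-N}.
\end{equation*}
The convolution-product rule together with the geometric sum in the numerator of \eqref{psi_M} then yields
\begin{equation*}
\hat{\Psi}_M(z)=\frac{\hat{r}(z)\bigl(1-\hat{s}(z)^M\bigr)}{p_M(\tau)\bigl(1-\hat{s}(z)\bigr)}.
\end{equation*}

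Next I substitute $z=\zeta/(\tau M)$. The key asymptotic inputs, valid because $\tau\to\tau_c$ with $\tau_c$ finite and $M\approx e^{bN}$ so that $N/M\to 0$, are:
\begin{itemize}
\item[(i)] $(z+a)^{-N}=p(\tau)\bigl(1+\tfrac{\zeta}{(1+\tau)M}\bigr)^{-N}\sim p(\tau)$, since $N/M\to 0$;
\item[(ii)] $p(\tau)\sim e^{\xi}/M$, which is exactly the computation performed in the proof of Theorem 3.3 under the scaling $\tau=\tau_c(1+\xi(1+\tau_c)/N)$;
\item[(iii)] $1+\tau z = 1+\zeta/M$.
\end{itemize}
Combining (i), (ii), (iii) gives $\hat{r}(z)\sim e^{\xi}/M$ and
\begin{equation*}
\hat{s}(z)=\frac{1-p(\tau)(1+o(1))}{1+\zeta/M}=1-\frac{e^{\xi}+\zeta}{M}+o\!\left(\frac{1}{M}\right),
\end{equation*}
so $1-\hat{s}(z)\sim(e^{\xi}+\zeta)/M$ and $M\log\hat{s}(z)\to -(e^{\xi}+\zeta)$, hence $\hat{s}(z)^M\to e^{-(e^{\xi}+\zeta)}$. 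Finally, from \eqref{p(tau)_M} and (ii), $p_M(\tau)\to 1-e^{-e^{\xi}}$.

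Inserting these four limits into the closed-form expression for $\hat{\Psi}_M(z)$, the two factors of $1/M$ cancel between $\hat{r}(z)$ in the numerator and $1-\hat{s}(z)$ in the denominator, and one is left with
\begin{equation*}
\lim_{N\to\infty}\hat{\Psi}_M\!\left(\frac{\zeta}{\tau M}\right)=\frac{e^{\xi}\bigl(1-e^{-e^{\xi}-\zeta}\bigr)}{(e^{\xi}+\zeta)\bigl(1-e^{-e^{\xi}}\bigr)},
\end{equation*}
which rearranges to the stated expression \eqref{asympt psi_M}.

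The main obstacle I anticipate is controlling the error in step (i), i.e.~proving that $\bigl(1+\tfrac{\zeta}{(1+\tau)M}\bigr)^{-N}\to 1$ uniformly enough to preserve the subsequent $M$-th power limit. Because the correction is $O(N/M)=O(Ne^{-bN})$ which is exponentially small, this is fine pointwise in $\zeta$, but care is needed in the combination $\hat{s}(z)^M$ to check that the $o(1/M)$ remainder inside $\hat{s}$ genuinely produces an $o(1)$ error after raising to the $M$-th power. This is handled by writing $M\log\hat{s}(z)=-M(1-\hat{s}(z))+O(M(1-\hat{s}(z))^2)=-(e^{\xi}+\zeta)+O(1/M)$, which is the only nontrivial estimate in the argument.
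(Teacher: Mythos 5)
Your proposal is correct and follows essentially the same route as the paper: both derive the closed-form Laplace transform $\hat{\Psi}_M(z)=\hat{r}(z)\bigl(1-\hat{s}(z)^M\bigr)/\bigl(p_M(\tau)(1-\hat{s}(z))\bigr)$ and then evaluate each factor at $z=\zeta/(\tau M)$ using $Mp(\tau)\to e^{\xi}$ and $N/M\to 0$. The only cosmetic difference is that the paper obtains $\hat{s}(z)^M\to e^{-\zeta}e^{-e^{\xi}}$ by first proving $e^{\zeta}\bigl(\hat{s}(z)/\hat{s}(0)\bigr)^M\to 1$, whereas you expand $1-\hat{s}(z)\sim(\zeta+e^{\xi})/M$ and exponentiate directly; the underlying estimates are identical.
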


The conditional probability distribution $\Psi_M$, whose Laplace transform satisfies \eqref{asympt psi_M} has very different behaviours as $\xi \rightarrow \infty $ and as $\xi \rightarrow - \infty $. 
More precisely we have the following theorem 
\begin{theorem} \label{thm:time}
Assume that $\lim_{N \to \infty} M=\infty $ and that $\lim_{N \to \infty} \frac{\log(M) }{N}= b >0$. 
Assume that 
\[
\tau = \tau_c \left(  1+ \frac{\xi (1+\tau_c)}{N }\right)
\] 
for a fixed $\xi \in \mathbb R$. 
Let $t \mapsto F(t)   := \tau M \Psi_M (\tau M t ) $. 
Then 
\[
\lim_{\xi  \to - \infty } \lim_{N \to \infty } F(t)  = \chi_{[0, 1 ] } (t)
, \ \text{ a.e. } t \geq 0 \]
and $ t \mapsto G (t)  := \frac{\tau M }{e^{\xi} } \Psi_M( \frac{\tau M t }{e^\xi}  )  $
\begin{equation} 
 \lim_{ \xi \to \infty } \lim_{N \to \infty} G (t) = e^{- t }  \  \text{ a.e. } t \geq 0.  
\end{equation}

\end{theorem}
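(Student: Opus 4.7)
The plan is to pass through Laplace transforms, invoke Proposition \ref{lem:scaled time}, invert the resulting transforms explicitly, and only then take the limit in $\xi$.

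\textbf{Step 1: Reduction to known Laplace limits.} By the substitution $u=\tau M t$ in the integral defining $\hat F$, and $u=\tau M t/e^{\xi}$ in the one defining $\hat G$, I obtain
\[
\hat F(\zeta) = \hat\Psi_M\!\left(\tfrac{\zeta}{\tau M}\right), \qquad \hat G(\zeta) = \hat\Psi_M\!\left(\tfrac{\zeta e^{\xi}}{\tau M}\right).
\]
Proposition \ref{lem:scaled time} then gives, writing $\eta:=e^{\xi}$,
\[
\lim_{N\to\infty}\hat F(\zeta)= \frac{1-e^{-(\zeta+\eta)}}{1-e^{-\eta}}\cdot\frac{\eta}{\zeta+\eta}, \qquad \lim_{N\to\infty}\hat G(\zeta)= \frac{1-e^{-\eta(\zeta+1)}}{1-e^{-\eta}}\cdot\frac{1}{\zeta+1}.
\]

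\textbf{Step 2: Explicit inversion.} Using the identity $\frac{1-e^{-(\zeta+\eta)}}{\zeta+\eta}=\int_0^1 e^{-(\zeta+\eta)s}\,ds$, the first limit rewrites as $\int_0^\infty e^{-\zeta t}f_\xi(t)\,dt$ with
\[
f_\xi(t) \;:=\; \frac{\eta}{1-e^{-\eta}}\, e^{-\eta t}\,\chi_{[0,1]}(t),
\]
a truncated exponential density on $[0,1]$ (direct integration confirms $\int f_\xi =1$). The analogous manipulation for $\hat G$ --- or more quickly the scaling relation $G(t)=e^{-\xi}F(e^{-\xi}t)$ that follows straight from the definitions --- gives
\[
\lim_{N\to\infty}G(t)\;=\;\frac{1}{1-e^{-\eta}}\,e^{-t}\,\chi_{[0,\eta]}(t).
\]
Since $F$ and $G$ are probability densities on $[0,\infty)$ (being conditional densities of $T_{receptor}$ given response), and $f_\xi$ and its $G$-counterpart are bounded probability densities, the continuity theorem for Laplace transforms of probability measures identifies $\lim_{N\to\infty}F=f_\xi$ and the corresponding limit for $G$ as measures; by Scheffé's lemma the convergence can then be upgraded to $L^1$, hence a.e.\ along the identified limit densities.

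\textbf{Step 3: Limit in $\xi$.} For $\xi\to-\infty$, i.e.~$\eta\to 0^+$, the Taylor expansion $1-e^{-\eta}=\eta+O(\eta^2)$ gives $\eta/(1-e^{-\eta})\to 1$, while $e^{-\eta t}\to 1$ uniformly on $[0,1]$, so $f_\xi(t)\to \chi_{[0,1]}(t)$ pointwise on $[0,\infty)\setminus\{1\}$, and hence a.e. For $\xi\to+\infty$, i.e.~$\eta\to+\infty$, one has $1/(1-e^{-\eta})\to 1$ and, for every fixed $t\geq 0$, $\chi_{[0,\eta]}(t)=1$ as soon as $\eta>t$, yielding the a.e.\ (in fact pointwise) limit $e^{-t}$.

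\textbf{Main obstacle.} The algebra in Steps 1 and 3 is elementary once Proposition \ref{lem:scaled time} is available; the technically delicate point is Step 2, namely lifting the pointwise-in-$\zeta$ convergence of Laplace transforms to a.e.\ convergence of the densities. The probabilistic structure (both sides are probability densities, with the limit candidate bounded and supported on a finite or exponentially decaying set) is exactly what makes Scheffé's lemma applicable, so the delicate point is really just the verification that the weak limit coincides with the candidate $f_\xi$ obtained by formal Laplace inversion --- which is guaranteed by uniqueness of the Laplace transform on $[0,\infty)$.
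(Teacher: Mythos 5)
Your proposal is correct in substance and routes through exactly the same key ingredient as the paper, namely the Laplace-transform asymptotics of Proposition \ref{lem:scaled time} combined with the continuity theorem for Laplace transforms of measures. The one genuine difference is the order of the two limits: the paper first sends $\xi \to \pm\infty$ at the level of the transforms, obtaining $\frac{1-e^{-\zeta}}{\zeta}$ and $\frac{1}{1+\zeta}$, and only then recognizes these as the transforms of $\chi_{[0,1]}$ and $e^{-t}$; you instead invert at finite $\xi$, which produces the explicit intermediate densities $f_\xi(t)=\frac{\eta}{1-e^{-\eta}}e^{-\eta t}\chi_{[0,1]}(t)$ and $\frac{1}{1-e^{-\eta}}e^{-t}\chi_{[0,\eta]}(t)$ with $\eta=e^{\xi}$, and then take $\xi\to\pm\infty$ pointwise. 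Your version buys a concrete description of the conditional response-time law at every finite $\xi$ (a truncated exponential on $[0,1]$, respectively on $[0,\eta]$), which is more informative than the paper's endpoint-only statement; the scaling identity $G(t)=e^{-\xi}F(e^{-\xi}t)$ is also a nice shortcut the paper does not exploit. One caveat: your appeal to Scheff\'e's lemma is stated backwards. Scheff\'e upgrades a.e.\ convergence of densities (plus convergence of integrals) to $L^1$ convergence; it cannot manufacture a.e.\ convergence out of weak convergence of measures, which is all that the Laplace continuity theorem delivers. Strictly speaking this leaves the pointwise "$\text{a.e. } t$" form of the inner limit $\lim_{N}F(t)$ unjustified --- but the paper's own proof has exactly the same gap (it stops at weak convergence and declares the conclusion to follow), so your argument meets the standard of rigor of the original while adding the explicit finite-$\xi$ laws.
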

The details of the proofs of Theorem \ref{thm:time} and of Proposition \ref{lem:scaled time} can be found in the Appendix \ref{app:speed}.

Notice that the above theorem states that, when the unbinding time $\tau $ is larger than the critical unbinding time $\tau_c$ (more precisely when $\frac{\tau}{\tau_c} -1 =\frac{\xi }{N} $) the  conditional probability of having a response in the time interval $ [t, t+dt] $, conditioned to the fact that the ligand triggers a response, is exponentially distributed, with parameter $\frac{e^\xi }{\tau M }$ for large values of $N$ and $M$.
As a consequence, for large values of $N $ (and for $M \approx e^{b N } $) the average time spent by a ligand with unbinding time $\tau > \tau_c $ in the network is $ \tau M  e^{-\xi }$ where $\xi = N \left( \frac{\tau}{\tau_c}-1 \right) $.
Notice that the average time spent in the network decreases as  $\xi $ increases. 
This is consistent with the fact that ligands with higher affinity with the receptor are expected to be faster in producing a positive response of the immune system than ligands with lower affinity with the receptor.

When $(\frac{\tau}{\tau_c} -1 ) \gg  \frac{1}{N} $ as $N\to \infty $, the conditional probability of having a response in the time $[t, dt+t ]$, conditioned to the fact that the ligand triggers a response, is uniformly distributed in the interval $[0, \tau M ] $.
Therefore the average amount of time that these type of ligands spend attached to the receptor is $ \frac{\tau M } {2} $. 
We stress that this average time is much bigger than the average time spent in the network by a ligand with unbinding time satisfying $\frac{\tau}{\tau_c} -1 =\frac{\xi }{N} $ when $\xi >0 $ is large, i.e. $\tau M  e^{-\xi }$.

\section{Fluctuations of the fluxes} \label{sec:specificity k response}
One of the quantities that is usually studied for the classical formulation as a system of ODEs of the Hopfield-Ninio chemical network, in order to compare the affinities of two ligands with a receptor, is the ratio of the fluxes of product induced by the two ligands, see for instance \eqref{compare fluxes} and also the analysis in \cite{kirby2023proofreading}. 
In this section we explain how to compute these fluxes in our model.

A natural way to interpret the fluxes in the setting of the stochastic KPR model considered in this paper, is the ratio $k/M$, where $M $ is the number of trials of the KPR and $k$ is the number of responses triggered by a ligand. 
Notice that $ J (\tau)= k /M $ is a random variable, whose statistical properties can be studied. Moreover, since $k $ is a function of the unbinding time $\tau $, also the flux $J(\tau)= k/M $ depends on $\tau$. 
To this end we study the probability that a ligand yields $k \leq  M $ responses in $M $ trials. 
It has been argued in \cite{kirby2023proofreading} that the fluctuations in the fluxes would not make it possible to derive information about the binding time from the flux $J(\tau )$. 
We will see in this section that, if $M $ is chosen sufficiently large, the measurements of the fluxes $J(\tau)$ allow to obtain $\tau $ with great accuracy. 
The maximal accuracy that can be obtained for a given value of $M $ will be computed in detail, see \eqref{overlap}. 

Notice that it is not clear that any biological system uses the fluxes of response triggered by a ligand to determine $\tau $. 
It is most likely that the way of working of the KPR network, in particular to discriminate between different unbinding times, is merely related to the absence of the presence of the response. In other worlds it is more likely that the KPR  model behaves like a digital system, yielding only two possible outcomes, i.e.~response or non-response, as explained in Section \ref{sec:specificity}.

We introduce the random variable $X_M $ with probability density given by  $\{ q_k(\tau) \}_{k=1}^M$. 
For a finite number of kinetic proofreading steps $N$ we have that $q_k(\tau ) $ is a Binomial distribution, in particular 
\begin{equation} \label{binomial}
q_k(\tau) = \binom{M }{ k } (1- p (\tau ))^{M - k } p(\tau)^k, 
\end{equation}
where we recall that $p(\tau) $ is given by \eqref{p(tau)}. 
We compute the asymptotics of $q_k $ as $M \to \infty $ when $N \to \infty$. 
We obtain two different behaviours depending on whether $M p (\tau ) \rightarrow \infty $ as $N \to \infty $ or $ \lim_{N \to \infty} M p (\tau ) = c >0  $.
In the first case we obtain that $X_M $ is approximated by a Gaussian, in the second case by a Poisson distribution. 
\begin{theorem} \label{thm: asympt normal}
Assume that $\lim_{N \to \infty } M  = \infty $ and assume that $\tau$ is such that $\lim_{N \to \infty } M p(\tau)= \infty $, where $p(\tau) $ is given by \eqref{p(tau)}. 
Then 
\begin{equation} \label{eq:binom with normal general}
\frac{X_M - M p (\tau ) }{ \sqrt{M p (\tau) \left[ 1-p(\tau)\right] }}  \overset{d}{\rightarrow} \mathcal N(0,1) \text{ as } N \rightarrow \infty.
\end{equation} 
\end{theorem}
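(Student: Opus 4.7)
The plan is to recognize $X_M$ as a sum of $M$ independent Bernoulli random variables and apply a central limit theorem for triangular arrays, since both the number of summands $M$ and the success probability $p=p(\tau)$ depend on $N$. The convergence will be established through characteristic functions and Lévy's continuity theorem.

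First I would write $X_M = \sum_{i=1}^M Y_i^{(N)}$, where the $Y_i^{(N)}$ are i.i.d.\ Bernoulli random variables with parameter $p=p(\tau)$, and set $\sigma_N := \sqrt{Mp(1-p)}$. The characteristic function of the normalized variable is then
\[
\mathbb{E}\!\left[e^{it(X_M-Mp)/\sigma_N}\right] = \left[(1-p)\,e^{-itp/\sigma_N} + p\,e^{it(1-p)/\sigma_N}\right]^M.
\]
Expanding each exponential via $e^{i\theta}=1+i\theta-\theta^2/2+O(\theta^3)$, and observing that the imaginary first-order terms cancel exactly (this is the role of the centering by $Mp$), the bracket equals
\[
1 - \frac{t^2 p(1-p)}{2\sigma_N^2} + O\!\left(\frac{t^3\,p(1-p)}{\sigma_N^3}\right) = 1 - \frac{t^2}{2M} + O\!\left(\frac{t^3\,p(1-p)}{\sigma_N^3}\right).
\]
Taking logarithms and multiplying by $M$ produces
\[
M\log[\,\cdot\,] = -\frac{t^2}{2} + O\!\left(\frac{t^3}{\sqrt{Mp(1-p)}}\right) + O\!\left(\frac{t^4}{M}\right),
\]
so that the characteristic function converges to $e^{-t^2/2}$ pointwise in $t\in\mathbb{R}$ provided $Mp(1-p)\to\infty$. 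Lévy's continuity theorem then yields \eqref{eq:binom with normal general}.

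The main technical point is reducing the required condition $Mp(1-p)\to\infty$ to the stated hypothesis $Mp(\tau)\to\infty$. This uses the specific form $p(\tau)=(\tau/(1+\tau))^N$: in all the asymptotic regimes of interest (namely those in which $\tau$ does not grow exponentially in $N$) one has $p(\tau)\to 0$, hence $1-p(\tau)\to 1$ and the two conditions coincide. The residual borderline case, where $p(\tau)$ might approach $1$, would need a separate argument (either handled symmetrically by applying the same CLT to $M-X_M\sim\mathrm{Bin}(M,1-p)$, or excluded by the implicit regime). A fully equivalent route, which avoids the uniform control of the Taylor remainders, is Lyapunov's CLT with $\delta=1$: since $\mathbb{E}|Y_i^{(N)}-p|^3 \le p(1-p)$, the Lyapunov ratio equals $M p(1-p)/\sigma_N^3 = 1/\sqrt{Mp(1-p)}$, which tends to zero under the same condition.
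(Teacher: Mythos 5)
Your proof is correct and takes essentially the same route as the paper, whose entire argument is to observe that $X_M$ is binomial with mean $Mp(\tau)$ and variance $Mp(\tau)\left[1-p(\tau)\right]$ and to invoke the central limit theorem; you merely supply the triangular-array/characteristic-function details that the paper leaves implicit. Your remark that the stated hypothesis $Mp(\tau)\to\infty$ only gives the needed condition $Mp(\tau)(1-p(\tau))\to\infty$ when $p(\tau)\not\to 1$ is a fair point the paper glosses over, though note that the ``symmetric'' application of the CLT to $M-X_M$ requires exactly the same condition (the normalized variable is just the negative of the original), so only excluding that regime genuinely resolves the borderline case.
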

\begin{proof}
    Notice that the random variable $X_M$ follows a Binomial distribution of mean $ \mathbb E[X_M]=M p (\tau) $ and variance $\operatorname{Var}[X_M ]= Mp (\tau) \left[ 1- p (\tau) \right] $. 
    By assumption we have that $M p(\tau) \rightarrow \infty $ as $N \to \infty $ (and hence as $M \to \infty $). Then the result follows by an application of the central limit theorem (\cite{feller1991introduction}). 
    
\end{proof}

\begin{corollary}
   Assume that  $\lim_{N \to \infty} M=\infty $ and $\lim_{N \to \infty } \frac{\log( M)}{N}  = b $ where $b >0 $.
   Moreover assume that $\tau$ is such that
\[
\lim_{N \to \infty } N \frac{\tau- \tau_c }{\tau_c (1 + \tau) } = \infty,
\] 
and 
\[
\tau (1-\tau_c)\leq 2 \tau_c  \text{ when } \tau_c \neq 1  \text{ and }  \tau \leq 1 \text{ when } \tau_c =1, \text{ for every } N \geq 1
\] 
where $\tau_c$ is given by \eqref{tauc exp}.
Then 
\begin{equation}\label{eq:binom with normal}
\frac{X_M - M p (\tau ) }{ \sqrt{M p (\tau) }}  \overset{d}{\rightarrow} \mathcal N(0,1) \text{ as } N \rightarrow \infty.
\end{equation}
\end{corollary}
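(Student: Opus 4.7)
The strategy is to derive this corollary directly from Theorem \ref{thm: asympt normal} by reducing the normalization $\sqrt{Mp(\tau)[1-p(\tau)]}$ to $\sqrt{Mp(\tau)}$ via Slutsky's theorem. This requires verifying two facts: (i) the hypothesis of Theorem \ref{thm: asympt normal}, namely $Mp(\tau)\to\infty$, so that the theorem applies; and (ii) $p(\tau)\to 0$, so that $\sqrt{1-p(\tau)}\to 1$.

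For (i), the clean identity to exploit is
\[
Mp(\tau) \;=\; Mp(\tau_c)\cdot\frac{p(\tau)}{p(\tau_c)} \;=\; \left(\frac{\tau(1+\tau_c)}{\tau_c(1+\tau)}\right)^{N} \;=\; (1+\eta_N)^{N},\qquad \eta_N:=\frac{\tau-\tau_c}{\tau_c(1+\tau)},
\]
where the first equality uses $Mp(\tau_c)=1$. A short algebraic check shows the technical hypothesis $\tau(1-\tau_c)\le 2\tau_c$ (respectively $\tau\le 1$ when $\tau_c=1$) is equivalent to $\eta_N\le 1$. On $[0,1]$ one has $\log(1+x)\ge(\log 2)\,x$, so
\[
\log(Mp(\tau)) \;=\; N\log(1+\eta_N) \;\ge\; (\log 2)\,N\,\frac{\tau-\tau_c}{\tau_c(1+\tau)}\;\longrightarrow\; \infty
\]
by the main hypothesis. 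Thus $Mp(\tau)\to\infty$, and Theorem \ref{thm: asympt normal} yields $(X_M-Mp(\tau))/\sqrt{Mp(\tau)[1-p(\tau)]}\overset{d}{\to}\mathcal N(0,1)$.

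For (ii), I would use Proposition \ref{lem:asympt tau_c M less than expo}: since $\log(M)/N\to b$, we have $\tau_c\to 1/(e^{b}-1)$, a positive constant. Together with the technical hypothesis, this pins $\tau$ into a bounded set (explicitly $\tau\le 2\tau_c/(1-\tau_c)$ when $\tau_c<1$, and $\tau\le 1$ when $\tau_c=1$), so $\tau/(1+\tau)$ is bounded away from $1$ uniformly in $N$. Hence
\[
p(\tau) \;=\; \left(\frac{\tau}{1+\tau}\right)^{N}\;\longrightarrow\;0.
\]
Finally, writing
\[
\frac{X_M-Mp(\tau)}{\sqrt{Mp(\tau)}} \;=\; \sqrt{1-p(\tau)}\cdot\frac{X_M-Mp(\tau)}{\sqrt{Mp(\tau)[1-p(\tau)]}},
\]
Slutsky's theorem combined with $\sqrt{1-p(\tau)}\to 1$ and (i) yields \eqref{eq:binom with normal}.

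The main obstacle is the algebraic reduction of the technical hypothesis to the clean bound $\eta_N\le 1$; once this is in hand, steps (i)--(iii) chain together almost mechanically. A secondary subtlety is the case $\tau_c>1$ (i.e.\ $b<\log 2$), where the stated hypothesis $\tau(1-\tau_c)\le 2\tau_c$ is vacuous; the argument for $p(\tau)\to 0$ still applies provided $\tau$ remains bounded, which is the natural regime of application of the corollary (typically $\tau$ is taken close to $\tau_c$, as in the rescaling $\tau=\tau_c(1+\xi(1+\tau_c)/N)$ used elsewhere in the paper).
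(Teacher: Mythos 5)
Your proposal is correct and follows essentially the same route as the paper: verify $Mp(\tau)\to\infty$ so that Theorem \ref{thm: asympt normal} applies, then check $p(\tau)\to 0$ and conclude by Slutsky. Your execution is in fact a bit more careful than the paper's: where the paper asserts $Mp(\tau)\sim\exp\left[N\frac{\tau-\tau_c}{\tau_c(1+\tau)}\right]$ (which as an asymptotic equivalence of exponents really needs $\eta_N\to 0$), your inequality $\log(1+x)\ge(\log 2)\,x$ on $[0,1]$ gives the needed divergence cleanly; and you correctly flag that for $\tau_c>1$ the hypothesis $\tau(1-\tau_c)\le 2\tau_c$ is vacuous, so $p(\tau)\to 0$ genuinely requires the additional (implicit) boundedness of $\tau$ — a point the paper's proof glosses over with "under the assumptions we always have $p(\tau)\to 0$".
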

\begin{proof}
    First of all notice that the assumptions on $\tau $ imply that $M p (\tau) \rightarrow \infty $ as $N \to \infty $. 
    Indeed
    \begin{align*}
   M p (\tau )= \left( e^b  \left(\frac{\tau}{1+\tau}  \right)  \right)^N  = \exp \left[ N \log \left( \frac{1+\tau_c}{\tau_c } \cdot \frac{\tau }{ 1 + \tau }  \right)  \right]   
    \end{align*}
    Using the fact that $ (1-\tau_c) \tau < 2 \tau_c $ when $\tau_c \neq 1$ and that $\tau \leq 1 $ when $\tau_c = 1$, we deduce that 
    \begin{align*}
        M p (\tau) \sim \exp \left[ N \frac{\tau- \tau_c }{ \tau_c (1+\tau) } \right] \text{ as } N \to \infty. 
    \end{align*}
    Hence $\lim_{N \to \infty } M p (\tau ) = \infty  $.
    Therefore Theorem \ref{thm: asympt normal} implies that 
    \eqref{eq:binom with normal general} holds. 
 Moreover notice that under the assumptions that we have on $\tau $ we always have that $ p (\tau ) \rightarrow 0 $ as $ N \to \infty $. 
 As a consequence \eqref{eq:binom with normal general} reduces to \eqref{eq:binom with normal}.
\end{proof}
\begin{theorem}
Assume that $\lim_{N \to \infty} M=\infty $ and that $\lim_{N \to \infty } \frac{\log( M)}{N}  = b $ where $b >0 $. 

Assume that $\tau = \tau_c \left( 1 + \frac{x}{N} (1+\tau_c) \right)$ where $x \in \mathbb R$ and where $\tau_c$ is given by \eqref{tauc exp}. 
Then 
\begin{equation} \label{eq:binomial poisson}
X_M  \overset{d}{\rightarrow} \operatorname{Pois} (x)  \text{ as } N \rightarrow \infty.
\end{equation}
\end{theorem}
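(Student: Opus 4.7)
The plan is to recognize that $X_M$, with probability mass function \eqref{binomial}, is a $\text{Binomial}(M, p(\tau))$ random variable, and to establish a Poisson limit by the classical law of rare events: if $p(\tau) \to 0$ and $M p(\tau) \to \lambda \in (0,\infty)$, then $X_M \overset{d}{\to} \text{Pois}(\lambda)$. The cleanest route is a probability-generating-function computation: for $z \in [0,1]$,
\[
\mathbb{E}[z^{X_M}] = \bigl(1 - p(\tau)(1-z)\bigr)^M = \exp\!\bigl(M\log(1 - p(\tau)(1-z))\bigr),
\]
and once one knows $p(\tau) \to 0$ and $M p(\tau) \to \lambda$, a Taylor expansion of the logarithm gives the limit $\exp(-\lambda(1-z))$, which is the p.g.f.\ of $\text{Pois}(\lambda)$.

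The two inputs needed are therefore immediate from the results already proved in the paper. First, since $\lim_N \log(M)/N = b > 0$, Proposition \ref{lem:asympt tau_c M less than expo}(2) gives $\tau_c \to 1/(e^b-1)$, so $\tau = \tau_c(1 + \frac{x(1+\tau_c)}{N})$ converges to a finite positive limit and $\tau/(1+\tau)$ stays bounded away from $1$; hence $p(\tau) = (\tau/(1+\tau))^N \to 0$ geometrically. Second, the limit of $M p(\tau)$ is exactly the quantity computed inside the proof of Theorem \ref{thm: M >>1}: writing
\[
M p(\tau) = \left(\frac{1+\tau_c}{\tau_c}\cdot\frac{\tau}{1+\tau}\right)^N,
\]
and using $M (\tau_c/(1+\tau_c))^N = 1$ together with the telescoping displayed in \eqref{exp from tau}, one obtains
\[
\lim_{N\to\infty} M p(\tau) = e^{x}.
\]

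The main obstacle is therefore not analytical but a matter of matching the limit to the stated conclusion: the Poisson parameter produced by this argument is $e^{x}$, not $x$. This is consistent with Theorem \ref{thm: M >>1}, which gives $p_M(\tau) \to 1 - e^{-e^{x}} = \mathbb{P}(\text{Pois}(e^{x}) \geq 1)$, and with the fact that the scaling $\tau = \tau_c(1 + \frac{x(1+\tau_c)}{N})$ allows $x \in \mathbb{R}$ to be negative, whereas $\text{Pois}(x)$ would not be defined for such $x$. I would therefore flag \eqref{eq:binomial poisson} as containing a typographical slip and record the theorem in the corrected form $X_M \overset{d}{\to} \text{Pois}(e^{x})$, which is what the argument above proves; alternatively, reparametrising by setting $y = e^{x} > 0$ (equivalently $\tau = \tau_c(1 + \frac{(1+\tau_c)\log y}{N})$) recovers a Poisson$(y)$ limit in the literal form of the stated claim.
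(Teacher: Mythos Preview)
Your proof is correct and follows essentially the same route as the paper's: both compute $Mp(\tau) = \left(\frac{1+\tau_c}{\tau_c}\cdot\frac{\tau}{1+\tau}\right)^N \sim (1+x/N)^N \to e^x$ and then invoke the Poisson approximation to the binomial (the paper cites Feller directly, you spell out the p.g.f.\ argument). Your diagnosis of the typo is also on target: the paper's own proof computes $\lim_{N\to\infty} Mp(\tau)=e^x$ and then, in the very next sentence, writes the Poisson parameter as $x$, so the inconsistency you spotted in the statement is reproduced verbatim in the original argument; the correct limit is indeed $\operatorname{Pois}(e^x)$, consistent with Theorem~\ref{thm: M >>1}.
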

\begin{proof}
First of all notice that 
\begin{align*}
    M p(\tau) = \left[\frac{1+\tau_c }{\tau_c } \cdot \frac{\tau}{1+\tau } \right]^N \sim \left( 1+\frac{ x }{N}\right)^N \text{ as } N \to \infty. 
\end{align*}
Hence 
\[
 \lim_{N \to \infty } M p (\tau) =\lim_{N \to \infty } \left( 1+ \frac{x}{N} \right)^N =e^x. 
\]
Then we can approximate the binomial distribution with a Poisson distribution of parameter $\lim_{N \to \infty } M p (\tau) = x $, see \cite{feller1991introduction}. Hence
 in this case we obtain \eqref{eq:binomial poisson}. 
\end{proof}

In order to compute the flux of product induced by $M \geq 1 $ attachments of a ligand to a receptor we define the random variable 
\[ 
J(\tau) := \frac{X_M}{M  }.
\]

Notice that, under the assumptions of Theorem \ref{thm: asympt normal}, i.e.~when $ M p (\tau) \to \infty $ and $M \to \infty$, we have that 
\[
\frac{J(\tau)-p(\tau)}{ \sqrt{\frac{p(\tau)}{M}}}   \sim \mathcal N \left( 0 , 1 \right).
\]

Assume now that $N $ is fixed and that $M \to \infty $ and that $\tau$ is of order $1$, hence $Mp (\tau)  \to \infty $. 
We can compare two fluxes $J(\tau_1 ) , J(\tau_2)$  corresponding to two different detachment times $\tau_1 \approx 1 $ and $ \tau_2 \approx 1 $.
In particular, when $\tau_1 $ and $\tau_2 $ are such that 
\begin{equation} \label{eq:unc principle p}
\left| p(\tau_1) - p (\tau_2 ) \right| \gtrsim  \sqrt{ \frac{1}{M}} \max\{ \sqrt{p(\tau_1)} , \sqrt{p(\tau_2)}  \} 
\end{equation}
for large $M$, then the system satisfies the specificity property.
Indeed the two corresponding normally distributed random variables $J(\tau_1) $ and $J(\tau_2) $ are centered respectively in $p(\tau_1) $ and $p (\tau_2) $ and have variance respectively equal to $ \sqrt{\frac{p(\tau_1)}{M }} $ and to $ \sqrt{\frac{p(\tau_2)}{M }} $. 
As a consequence, the overlapping between the probability distributions of $\xi_M (\tau_1) $ and $\xi_M(\tau_2) $ is small when \eqref{overlap} holds. 
Hence, computing the fluxes of products $p(\tau_2 ) $ and $p(\tau_1) $ allows to discriminate two ligands when \eqref{eq:unc principle p} is satisfied. See also Figure \ref{fig5}. 
    
\begin{figure}[H]%
\centering
\includegraphics[width=1\linewidth]{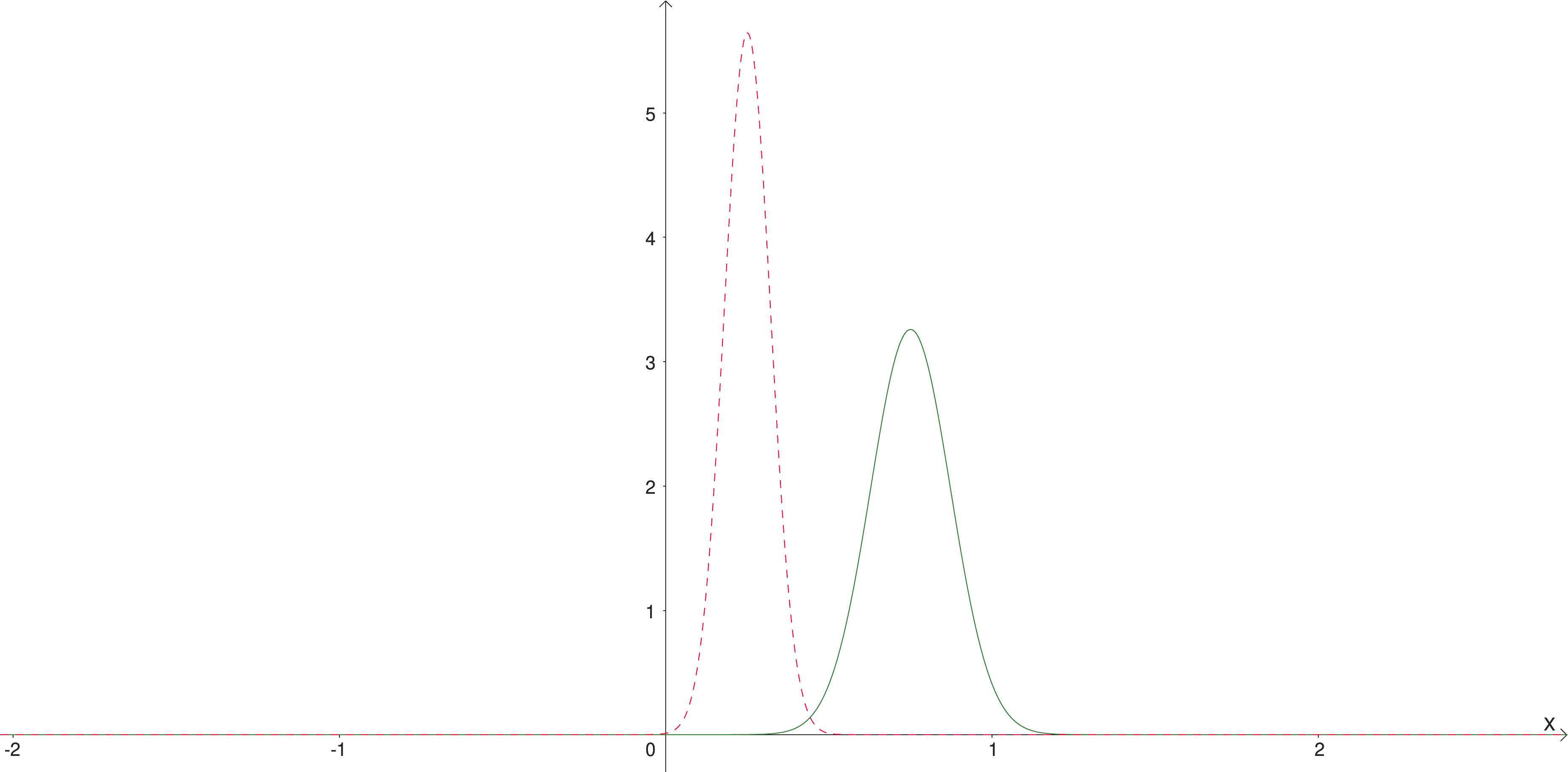}
\caption{In red with a dashed line $\mathcal N \left(p(\tau_1), \sqrt{ \frac{ p (\tau_1) } {M}}  \right)$, where $p(\tau_1 ) =1/4 $ and $M = 50$. 
In green $\mathcal N \left(p(\tau_2), \sqrt{ \frac{ p (\tau_2) }{M}} \right)$, where $p(\tau_2 ) =3/4$ and $M $ is as before. 
Notice the small region of overlap due to the fact that \eqref{eq:unc principle p} holds.
}
\label{fig5}
\end{figure}
Assume that $\tau_2 > \tau_1 $. Recall that $p (\tau ) $ is given by \eqref{p(tau)}. Hence, we can rewrite \eqref{eq:unc principle p} as 
\[
\left|  \exp \left(  \frac{  N \tau_1}{1+\tau_1 }\right) - \exp \left(   \frac{ N \tau_2}{1+\tau_2 }\right)\right| \gtrsim  \sqrt{ \frac{1 }{M} }\exp \left(  \frac{ N \tau_2}{ 2(1+\tau_2) }\right)   
.
 \]
This inequality can be rewritten on the following form 
\[
\left|  \exp \left( N \log\left( \frac{  \tau_1}{\tau_2 } \right) + \log\left( \frac{1+\tau_2 }{1+\tau_1 } \right) \right) - 1\right| \gtrsim  \sqrt{ \frac{1 }{M p(\tau_2) } }
\]
Since $\tau_1 \approx 1 $ and $\tau_2 \approx 1 $ we have that  $ \log\left( \frac{  \tau_1}{\tau_2 } \right) \sim \frac{\tau_1-\tau_2 }{\tau_2 }$ as $N \to \infty $ and $\log\left( \frac{1+\tau_2 }{1+\tau_1 } \right) \sim \frac{\tau_2 - \tau_1 }{1+\tau_1}$ as $N \to \infty$, we deduce that 
\[
\left|  \exp \left( N \left( \frac{\tau_2-\tau_1 }{\tau_2 (1+\tau_1) } \right) \right)  - 1\right| \gtrsim  \sqrt{ \frac{1 }{M p(\tau_2) } }\text{ as } N \to \infty. 
\]
Hence if we assume that $ \frac{\tau_2-\tau_1 }{\tau_2 (1+\tau_2) }  \lesssim \frac{1}{N} $, then we deduce that 
\[
\frac{\tau_2-\tau_1 }{\tau_2 }   \gtrsim  \frac{1+\tau_1}{N } \sqrt{ \frac{1 }{M p(\tau_2) } } \text{ as } N \to \infty. 
\]

\section{Model with different phosphorylation rates and unbinding times} \label{sec:different rates}
In order to show that the properties of the KPR model observed in this paper do not depend on the particular choice of the unbinding times and phosphorylation rates that we select in \eqref{ODE kpr overline t}, we discuss here a more general model in which the phosphorylation rate and the unbinding times depend on the phosphorylation state of the complex ligand-receptor. 
We will demonstrate that, when $ M \approx e^{ b N } $ as $N \to \infty $ the model does not differ in a significant way from the model with constant rates.

Let $n_k(\overline t)$ be the probability that a ligand reaches the phosphorylation state $k$ in the time interval $(0,\overline t)$ and  $R(\overline t)$ be the probability of a response in the time interval $(0, \overline t)$. 
Then, in this case we have that 
\begin{align} \label{ODE kpr overline t different coefficients}
   \frac{d n_1 }{ d \overline t } =&  - \left( \varphi_1 + \frac{1}{ \tau_1 } \right) n_1, \nonumber \\
    \frac{d n_k }{ d \overline t } =& \varphi_{k-1} n_{k-1}  - \left( \varphi_{k} + \frac{1}{\tau_{k}} \right) n_k, \quad k = 2 \dots N \\
    \frac{d R }{ d \overline t } =& \varphi_N n_N  \nonumber 
\end{align}
with initial condition $n(0) = e_1 \in \mathbb R^{N}$, where $e_1=(1, 0, \dots, 0)$ and $R(0)=0$.
With standard arguments we can compute $P$, the probability of haing a response. 

\begin{lemma} \label{lem:solution ODE different rates}
Let $N \geq 1 $ and let
Let $\{  \tau_k \}_{k=1}^N  $ and $\{ \varphi_k \}_{k=1}^N $ be such that $\tau_k >0 $ and $\varphi_k >0 $ . 
Assume that $n(0)=e_1 \in \mathbb R^N$ and $R(0)=0$. 
Then the solution $((n_1, \dots, n_N), R )$ of equation \eqref{ODE kpr overline t different coefficients} is such that 
\[
P := \int_0^\infty R(t) dt  = \prod_{k =1 }^N \left(  \frac{ \varphi_k }{ \varphi_k + \frac{1 }{ \tau_k }} \right). 
\]
\end{lemma}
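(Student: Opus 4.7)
The plan is to reduce the differential system to an algebraic one by integrating each equation against $dt$ on $[0,\infty)$. Since the coefficient matrix of \eqref{ODE kpr overline t different coefficients} is lower-triangular with strictly negative diagonal entries $-(\varphi_k + 1/\tau_k)$, each $n_k(t)$ is a finite linear combination of decaying exponentials, so $n_k(\infty)=0$ for every $k=1,\dots,N$. This decay at infinity is the only soft input needed; I would verify it briefly by noting that $n_k(t)$ is bounded by the solution of the same linear ODE with the input-flux from $n_{k-1}$ replaced by $\|n_{k-1}\|_\infty$, and inductively each is integrable.

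Next, integrating the first equation of \eqref{ODE kpr overline t different coefficients} from $0$ to $\infty$ and using $n_1(0)=1$, $n_1(\infty)=0$ gives
\begin{equation*}
-1 = -\Bigl(\varphi_1+\tfrac{1}{\tau_1}\Bigr)\int_0^\infty n_1(t)\,dt, \qquad\text{hence}\qquad \int_0^\infty n_1(t)\,dt = \frac{1}{\varphi_1+1/\tau_1}.
\end{equation*}
For $k\ge 2$, the same procedure combined with $n_k(0)=n_k(\infty)=0$ produces the recursion
\begin{equation*}
\int_0^\infty n_k(t)\,dt \;=\; \frac{\varphi_{k-1}}{\varphi_k+1/\tau_k}\,\int_0^\infty n_{k-1}(t)\,dt.
\end{equation*}

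A telescoping induction then yields
\begin{equation*}
\int_0^\infty n_N(t)\,dt \;=\; \frac{\prod_{j=1}^{N-1}\varphi_j}{\prod_{k=1}^N\bigl(\varphi_k+1/\tau_k\bigr)}.
\end{equation*}
Finally, since the probability of response is $\varphi_N\int_0^\infty n_N(t)\,dt$ (obtained by integrating the last equation of \eqref{ODE kpr overline t different coefficients} and using $R(0)=0$, which is what $P$ encodes), multiplication by $\varphi_N$ gives
\begin{equation*}
P \;=\; \prod_{k=1}^{N}\frac{\varphi_k}{\varphi_k+1/\tau_k},
\end{equation*}
as desired. There is no real obstacle: the whole argument is an integrated version of the elementary probabilistic fact that at each state the next event is either phosphorylation (rate $\varphi_k$) or detachment (rate $1/\tau_k$), and by the exponential-race rule the phosphorylation wins with probability $\varphi_k/(\varphi_k+1/\tau_k)$, independently across states by the strong Markov property. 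One could alternatively take Laplace transforms and use the induction $\hat n_k(z) = \varphi_{k-1}\hat n_{k-1}(z)/(z+\varphi_k+1/\tau_k)$, evaluating at $z=0$, but the direct integration above is cleaner and matches the tone of the proof of Lemma \ref{lem:solution ODE}.
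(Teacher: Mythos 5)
Your argument is correct, and since the paper omits the proof of this lemma entirely (it is dispatched with the phrase ``with standard arguments''), the integrated flux balance you propose is a perfectly adequate way to supply it: integrating each equation of \eqref{ODE kpr overline t different coefficients} over $[0,\infty)$ and using $n_k(0)=\delta_{k1}$ and $n_k(\infty)=0$ turns the ODE system into the telescoping recursion $\int_0^\infty n_k\,dt=\frac{\varphi_{k-1}}{\varphi_k+1/\tau_k}\int_0^\infty n_{k-1}\,dt$, and multiplying by $\varphi_N$ gives the product formula. This is arguably cleaner than transplanting the method of Lemma \ref{lem:solution ODE}, which computes the solution explicitly: with state-dependent rates the explicit solution would require a partial-fraction (or confluent) expansion over the distinct decay rates, whereas your argument --- or equivalently the Laplace-transform recursion $\hat n_k(0)=\varphi_{k-1}\hat n_{k-1}(0)/(\varphi_k+1/\tau_k)$ that you mention --- never needs it, and it makes transparent the probabilistic reading as a product of exponential-race probabilities.

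Two small points. First, your justification of $n_k(\infty)=0$ is stated slightly wrong: replacing the input flux from $n_{k-1}$ by the constant $\|n_{k-1}\|_\infty$ only yields boundedness of $n_k$ (the comparison solution tends to a positive constant), not decay. What you actually want is the variation-of-constants formula $n_k(t)=\varphi_{k-1}\int_0^t e^{-(\varphi_k+1/\tau_k)(t-s)}n_{k-1}(s)\,ds$, from which exponential decay, hence $n_k(\infty)=0$ and $n_k\in L^1(0,\infty)$, follows by induction because every diagonal entry $-(\varphi_k+1/\tau_k)$ of the triangular system is strictly negative. Second, the quantity in the statement must be read as $P=\lim_{t\to\infty}R(t)=\varphi_N\int_0^\infty n_N(t)\,dt$ rather than the literal $\int_0^\infty R(t)\,dt$, which diverges since $R(t)$ tends to a positive limit; you have implicitly made this correct reading, consistent with \eqref{p as a function of R} and \eqref{R} in the constant-rate case, but it is worth flagging the typo.
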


Fix an unbinding time $\tau >0$. Then there exists a sequence $\{ b_k \}_{k=1}^N $ such that $ \varphi_k \tau_k = b_k \tau $ for every $k \in \{ 1, \dots, N \} $. We define the function $G_N : (0, \infty ) \mapsto (0, 1] $ as
\[
G_N (\tau ):= \prod_{k =1 }^N \left( \frac{b_k \tau }{1+ b_k \tau } \right). 
\]
Notice that $G_N (\tau )=P$. 
Moreover, by the interpretation of $R$ we have that $P $ is the probability of having a response. 
We can define as in Section \ref{sec:model} the probability $P_M$ of having a response after $M $ trials, as 
\[
P_M (\tau) := 1- \left( 1- P \right)^M = 1- \left( 1- \prod_{k =1 }^N \left( \frac{\varphi_k \tau_k }{1+ \varphi_k \tau_k } \right) \right)^M = 1- \left( 1- G_N(\tau) \right)^M .
\]
\begin{lemma}
Let $ N \geq 1$,  $\{ b_k \}_{k=1}^N $ such that $b_k >0 $ for every $k \in \{ 1, \dots, N \} $. 
    The function $G_N $ is monotone and such that
    \[
    G_N(0)=0 \text{ and } \lim_{\tau\to\infty}G_N(\tau)=1. 
    \]
\end{lemma}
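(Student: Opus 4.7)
The plan is to reduce the statement to elementary properties of each individual factor in the product defining $G_N$. Write $f_k(\tau) := \frac{b_k \tau}{1 + b_k \tau}$, so that $G_N(\tau) = \prod_{k=1}^N f_k(\tau)$. For each fixed $k$, I would verify three facts by direct computation: first, that $f_k(0) = 0$; second, that $\lim_{\tau \to \infty} f_k(\tau) = 1$, which follows by rewriting $f_k(\tau) = 1 - \frac{1}{1 + b_k \tau}$; and third, that $f_k$ is strictly increasing on $[0, \infty)$, which is clear from
\[
f_k'(\tau) = \frac{b_k}{(1 + b_k \tau)^2} > 0.
\]

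From these pointwise statements the lemma follows at once. The identity $G_N(0) = 0$ is immediate since the factor $f_1(0)$ already vanishes. The limit $\lim_{\tau \to \infty} G_N(\tau) = 1$ follows because the product of a finite number of factors each tending to $1$ converges to $1$. For monotonicity, I would observe that for $\tau > 0$ each $f_k(\tau) > 0$ and each $f_k$ is strictly increasing, so $G_N$, being a product of finitely many positive strictly increasing functions, is itself strictly increasing on $(0, \infty)$; combined with $G_N(0) = 0 < f_1(\tau) \prod_{k \geq 2} f_k(\tau) = G_N(\tau)$ for any $\tau > 0$, this gives strict monotonicity on all of $[0, \infty)$.

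There is no real obstacle in this proof: the statement is a routine consequence of the shape of the Michaelis--Menten-type factors $f_k$ and of the continuity of the product map. The only mild point worth noting is that strict (as opposed to weak) monotonicity requires using the positivity of each $f_k$ on $(0, \infty)$, which ensures the product of the derivatives of the form $f_k' \prod_{j \neq k} f_j$ is strictly positive; this is where the hypothesis $b_k > 0$ for all $k$ is genuinely needed.
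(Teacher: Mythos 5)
Your proof is correct; the paper states this lemma without proof, treating it as immediate, and your factor-by-factor argument (each $f_k(\tau)=\frac{b_k\tau}{1+b_k\tau}$ vanishes at $0$, tends to $1$, and is strictly increasing, so the finite product inherits all three properties) is exactly the standard verification one would supply.
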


In this case we define the critical time is $\tilde{\tau}_c$ is the solution of
    \begin{equation} \label{critical tau different rates}
 M P  = M G_N (\tilde{\tau}_c) = 1.
    \end{equation}
    where $M > 1 $. 
As in the model with phosphorylation and unbinding rates that do not depend on the phosphorylation state of the complex, we obtain that when $M \approx e^{ b N } $ as $N \to \infty $ then the critical time is finite. 

\begin{lemma} \label{lem:existence of critical time diff rates}
Assume that $ \lim_{N \to \infty } M = \infty $ and $ \lim_{N \to \infty } \frac{ \log(M) }{N } = b >0$. 
Assume that $\{ b_k \}_{k=1}^N $ is such that there exists a $\mu \in \mathcal M_+$ such that
\[
\frac{1}{ N } \sum_{k =1 }^N \delta (x- b_k ) \rightharpoonup \mu (dx) \text{ as } N \to \infty.
\]

 Then $  \lim_{N \to \infty} \tilde{\tau_c } = \overline T \in (0, \infty ) $ and is the solution of
 \[ 
  \int_{0}^\infty \log \left(  \frac{x \overline T }{1 + x \overline T } \right) \mu(dx) = - b. 
 \]
\end{lemma}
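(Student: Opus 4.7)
The plan is to take logarithms of the defining equation $MG_N(\tilde\tau_c)=1$ to turn the product into an empirical-measure integral, and then pass to the limit using the weak convergence of $\frac1N\sum_{k=1}^N \delta_{b_k}$.

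First, taking $\log$ of $MG_N(\tilde\tau_c)=1$ and dividing by $N$ gives
\[
\frac{\log M}{N} \;=\; -\frac1N\sum_{k=1}^N \log\!\left(\frac{b_k\tilde\tau_c}{1+b_k\tilde\tau_c}\right) \;=:\; F_N(\tilde\tau_c).
\]
The function $F_N:(0,\infty)\to(0,\infty)$ is strictly decreasing in $T$, with $F_N(T)\to+\infty$ as $T\to 0^+$ and $F_N(T)\to 0$ as $T\to\infty$, so $\tilde\tau_c$ is uniquely determined as the solution of $F_N(T)=\log M/N$. The analogous limit function
\[
F(T) \;:=\; -\int_0^\infty \log\!\left(\frac{xT}{1+xT}\right)\mu(dx)
\]
is also strictly decreasing from $+\infty$ to $0$, so there is a unique $\overline T\in(0,\infty)$ with $F(\overline T)=b$.

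Next I would establish that $F_N\to F$ pointwise (or uniformly on compact subsets of $(0,\infty)$). Since each measure $\frac1N\sum\delta_{b_k}$ is a probability measure, weak convergence makes $\mu$ a probability measure as well. For $T$ fixed the integrand $\varphi_T(x)=-\log(xT/(1+xT))=\log(1+1/(xT))$ is continuous and positive on $(0,\infty)$, tends to $0$ as $x\to\infty$ and to $+\infty$ as $x\to 0^+$. Weak convergence plus the boundedness of $\varphi_T$ away from the origin handles the tail $\{x\geq\varepsilon\}$; the contribution of the singular region $\{x<\varepsilon\}$ must be controlled by a uniform integrability argument — this is the step where the hypothesis that $\overline T\in(0,\infty)$ is used, since it is equivalent to $\varphi_T\in L^1(\mu)$, and one argues that $\limsup_N\int_{x<\varepsilon}\varphi_T\,d(\text{empirical})\to 0$ as $\varepsilon\to 0$ uniformly in $N$ (otherwise $F_N$ would blow up and $\tilde\tau_c$ could not stay in a compact set).

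Finally, I would show $\tilde\tau_c\to\overline T$ by a compactness-plus-uniqueness argument: since $F_N(\tilde\tau_c)\to b\in(0,\infty)$ and the $F_N$'s have uniform control at $0$ and $\infty$ (from the fact that the $b_k$ stay in the mass region of $\mu$ for $N$ large), any subsequence of $\tilde\tau_c$ admits a further subsequence converging to some $T^*\in(0,\infty)$; the pointwise convergence $F_N(T^*)\to F(T^*)$ together with continuity of $F$ forces $F(T^*)=b$, hence $T^*=\overline T$ by the uniqueness established above, so the whole sequence converges.

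The main obstacle is the step controlling the singularity of $\varphi_T(x)=\log(1+1/(xT))$ near $x=0$: plain weak convergence $\frac1N\sum\delta_{b_k}\rightharpoonup\mu$ tests against $C_b$ functions, so one needs an additional uniform-integrability input (or a tightness-type estimate at the origin) to upgrade it to convergence of $\int\varphi_T\,d(\cdot)$. The same issue dictates why $\overline T$ is finite and positive exactly when $\log(1+1/(\cdot T))$ is $\mu$-integrable, which is the nontrivial implicit content of the conclusion.
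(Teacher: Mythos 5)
Your proposal follows essentially the same route as the paper: take logarithms of $M G_N(\tilde\tau_c)=1$, rewrite the resulting sum as an integral against the empirical measure $\frac1N\sum_k\delta_{b_k}$, pass to the limit using the assumed weak convergence, and conclude by strict monotonicity of $T\mapsto\int_0^\infty\log\bigl(\tfrac{xT}{1+xT}\bigr)\mu(dx)$. You are in fact more careful than the paper, whose proof silently interchanges the limit with the integral and treats $\tilde\tau_c$ as if it did not depend on $N$; the uniform-integrability issue near $x=0$ and the compactness-plus-uniqueness step you flag are points the published argument glosses over rather than defects of your approach.
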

\begin{proof}
    First of all notice that a solution to \eqref{critical tau different rates} exists for every $N \geq 1 $ due to the monotonicity of $ \tau \mapsto G_N(\tau)  $ and the fact that $G_N(0)=0 $ and $G_N(\infty )=1 $. 
We now want to prove that $\lim_{N \to \infty } \tilde{\tau}_c $ is finite. 

Notice that \eqref{critical tau different rates} can be written as 
\[
\exp \left( \frac{1}{N} \sum_{k=1}^N  \log\left(1- \frac{1 }{1+ b_k \tilde{\tau}_c }  \right) \right) =\left( G_N ( \tilde{\tau}_c  ) \right)^{\frac{1}{N}}=\left(  \frac{1}{M } \right)^{\frac{1}{N}} = \exp \left( - \frac{ \log(M)}{N} \right). 
\]
Since we assume that $\lim_{N \to \infty } \frac{\log(M) }{N } = b $ we deduce that 
\begin{align*}
-b &= \lim_{N \to \infty }  \left( \frac{1}{N} \sum_{k=1}^N  \log\left(1- \frac{1 }{1+ b_k \tilde{\tau}_c }  \right) \right) = \lim_{N \to \infty } \frac{1}{N} \sum_{k=1}^N  \int_0^\infty    \log\left(1- \frac{1 }{1+ x \tilde{\tau}_c  }  \right) \delta (x- b_k ) dx \\
&=  \lim_{N \to \infty } \int_0^\infty    \log\left(1- \frac{1 }{1+ x \tilde{\tau}_c  }  \right)  \frac{1}{N} \sum_{k=1}^N \delta (x- b_k ) dx=  \lim_{N \to \infty } \int_0^\infty    \log\left(1- \frac{1 }{1+ x \lim_{N \to \infty} \tilde{\tau}_c  }  \right)  \mu(dx). 
\end{align*}
Now notice that since the function 
\[
\tau \mapsto    \int_0^\infty    \log\left(1- \frac{1 }{1+ x  \tau }  \right)  \mu(dx). 
\]
is monotonically increasing there exists a unique solution to 
\[ 
- b =    \int_0^\infty    \log\left(1- \frac{1 }{1+ x   \tau }  \right)  \mu(dx)
\]
which is $0 < \tau = \lim_{N \to \infty } \tilde {\tau}_c < \infty $. 
\end{proof}

\begin{proposition}
Assume that $ \lim_{N \to \infty } M = \infty $ and $ \lim_{N \to \infty } \frac{ \log(M) }{N } = b >0$.
    Assume that $\{ b_k \}_{k=1}^N $ is as in Lemma \ref{lem:existence of critical time diff rates}. 
    Then 
    \[
  \lim_{N \to \infty }  P_M \left( \tilde{\tau}_c \left( 1 +  \frac{\xi  }{N D} \right)  \right) = 1- \exp ( -  \exp (\xi)) \text{ as } N \to \infty, 
    \]
    where $\xi \in \mathbb R$ and where $ D:= \lim_{N \to \infty } \frac{1}{N} \sum_{k=1}^N   \frac{ 1}{1 + \tilde{\tau}_c b_k } = \mu([0, \infty)) $.
\end{proposition}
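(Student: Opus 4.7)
The plan is to mimic the argument used in the proof of Theorem \ref{thm: M >>1} (the case $M\gg 1$) and adapt it to the variable-rates setting, using Lemma \ref{lem:existence of critical time diff rates} and the normalising constant $D$ to extract the correct rescaling.

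First I would rewrite $P_M(\tau) = 1-(1-G_N(\tau))^M = 1-\exp\bigl(M\log(1-G_N(\tau))\bigr)$. By the defining relation $MG_N(\tilde\tau_c)=1$ and the assumption $M\to\infty$, we have $G_N(\tilde\tau_c)=1/M\to 0$, and since the perturbation $\tau = \tilde\tau_c(1+\xi/(ND))$ is close to $\tilde\tau_c$, monotonicity of $G_N$ and a quick estimate give $G_N(\tau)\to 0$ as well. Hence the Taylor expansion $\log(1-G_N(\tau))\sim -G_N(\tau)$ yields
\[
P_M(\tau) \sim 1-\exp\bigl(-M G_N(\tau)\bigr) = 1-\exp\!\left(-\frac{G_N(\tau)}{G_N(\tilde\tau_c)}\right)
\]
as $N\to\infty$, which reduces the problem to computing the asymptotic behaviour of the ratio $G_N(\tau)/G_N(\tilde\tau_c)$.

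Next I would expand this ratio as
\[
\frac{G_N(\tau)}{G_N(\tilde\tau_c)} = \prod_{k=1}^N \frac{b_k\tau(1+b_k\tilde\tau_c)}{b_k\tilde\tau_c(1+b_k\tau)} = \left(1+\frac{\xi}{ND}\right)^{\!N}\prod_{k=1}^N \frac{1}{1+\dfrac{b_k\tilde\tau_c}{1+b_k\tilde\tau_c}\cdot\dfrac{\xi}{ND}},
\]
where I used $\tau-\tilde\tau_c = \tilde\tau_c\xi/(ND)$. Taking the logarithm and Taylor-expanding each factor to first order in $1/N$ (which is uniform in $k$ because $b_k\tilde\tau_c/(1+b_k\tilde\tau_c)\in[0,1]$), one obtains
\[
\log\!\left(\frac{G_N(\tau)}{G_N(\tilde\tau_c)}\right) = \frac{\xi}{D}\cdot \frac{1}{N}\sum_{k=1}^N\left(1 - \frac{b_k\tilde\tau_c}{1+b_k\tilde\tau_c}\right) + o(1) = \frac{\xi}{D}\cdot \frac{1}{N}\sum_{k=1}^N \frac{1}{1+b_k\tilde\tau_c} + o(1).
\]
By the very definition of $D$, the average on the right converges to $D$, so the whole expression converges to $\xi$, and therefore $MG_N(\tau)\to e^\xi$. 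Substituting into the previous display yields $P_M(\tau)\to 1-\exp(-e^\xi)$, which is the claim.

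The main obstacle is the uniform control of the remainder in the first-order Taylor expansion summed over $k=1,\dots,N$: each factor contributes an error of order $1/N^2$ multiplied by a bounded coefficient, giving a total error of order $1/N$, but one must also verify that $\tilde\tau_c\to\bar T\in(0,\infty)$ (guaranteed by Lemma \ref{lem:existence of critical time diff rates}) and that the convergence $\frac{1}{N}\sum_k\frac{1}{1+b_k\tilde\tau_c}\to D$ is valid even though $\tilde\tau_c$ itself depends on $N$. This second point is handled by splitting $\frac{1}{1+b_k\tilde\tau_c} = \frac{1}{1+b_k\bar T} + \bigl(\frac{1}{1+b_k\tilde\tau_c}-\frac{1}{1+b_k\bar T}\bigr)$, bounding the second term by $|\tilde\tau_c-\bar T|\cdot b_k/(1+b_k\bar T)^2\le|\tilde\tau_c-\bar T|/\bar T$ uniformly in $k$, and then applying weak convergence of the empirical measures $\frac1N\sum_k\delta_{b_k}\rightharpoonup\mu$ to the bounded continuous function $x\mapsto 1/(1+x\bar T)$.
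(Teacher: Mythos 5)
Your proposal is correct and follows essentially the same route as the paper: reduce $P_M(\tau)$ to $1-\exp(-MG_N(\tau))$, write $MG_N(\tau)=G_N(\tau)/G_N(\tilde\tau_c)$ as a product over $k$, take logarithms and Taylor-expand to get $\xi$ times the average of $\tfrac{1}{1+b_k\tilde\tau_c}$ divided by $D$. Your extra care in justifying $\tfrac1N\sum_k\tfrac{1}{1+b_k\tilde\tau_c}\to D$ despite the $N$-dependence of $\tilde\tau_c$ is a welcome tightening of a step the paper leaves implicit, but it is not a different argument.
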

\begin{proof}
 The proof is similar to the one of Theorem \ref{thm: M >>1}. 
 Indeed, let $\tau = \tilde{\tau}_c \left( 1 +  \frac{x }{N } \right) $, then 
 \begin{align*}
     P_M \left( \tau  \right)  \sim 1- \left( 1- G_N(\tau )\right)^M \sim  1-\exp \left( M \log \left( 1- G_N(\tau )\right) \right) \sim 1- \exp (- M G_N (\tau ) ) \text{ as } N \to \infty. 
 \end{align*}
 Now notice that 
 \begin{align*}
     M G_N (\tau )&= \prod_{k=1}^N \left( \frac{ 1 +  \tilde{\tau}_c b_k }{\tilde{\tau}_c b_k  } \right)  \prod_{k=1}^N \left( \frac{ \tau b_k }{ 1+ \tau b_k } \right) = \prod_{k=1}^N \left( \frac{ 1 + \tilde{\tau}_c b_k }{\tilde{\tau}_c b_k  }  \right)\prod_{j=1}^N \frac{ \tilde{\tau}_c  b_j \left( 1 + \frac{x }{N }\right) }{ \left( 1+ \tilde{\tau}_c b_j \left( 1 + \frac{x}{N }\right) \right) } \\
     &= \left( 1+ \frac{x }{N }\right)^N \prod_{k=1}^N   \frac{ \left(1 + \tilde{\tau}_c b_k \right) }{\left( 1 + \tilde{\tau}_c b_k + \frac{x}{N} \tilde{\tau}_c b_k \right) } \\
     & \sim e^{x} \exp\left( \log \left( \prod_{k=1}^N \frac{ \left(1 + \tilde{\tau}_c b_k \right) }{\left( 1 + \tilde{\tau}_c b_k + \frac{x}{N} \tilde{\tau}_c b_k \right) } \right)  \right)  = e^{x} \exp\left(  \sum_{k=1}^N \log \left( \frac{ 1 + \tilde{\tau}_c b_k  }{1 + \tilde{\tau}_c b_k + \frac{x}{N} \tilde{\tau}_c b_k  } \right)  \right) \\
     & = e^{x } \exp\left(  \sum_{k=1}^N \log \left( \frac{ 1  }{1  + \frac{x}{N} \frac{ \tilde{\tau}_c b_k}{1 + \tilde{\tau}_c b_k }  } \right)  \right) \sim   \exp\left(x \left(1-  \frac{1 }{N} \sum_{k=1}^N   \frac{ \tilde{\tau}_c b_k}{1 + \tilde{\tau}_c b_k } \right) \right)
     \text{ as } N \to \infty. 
 \end{align*}
 Hence the desired conclusion follows taking $x = \frac{\xi }{1 - \frac{1}{N }\sum_{k=1}^N   \frac{ \tilde{\tau}_c b_k}{1 + \tilde{\tau}_c b_k } }$. 
 \end{proof}
\section{Conclusions}

In this paper, we study a simple probabilistic model of kinetic proofreading. In this model a ligand can bind to the receptor $M\geq 1 $ times. 
We analyse the asymptotic behaviour of the probability of having a response in $M $ trials for large values of $N$, the number of  proofreading steps, under different assumptions on the dependence of $M $ on $N$. 
From our analysis we deduce that, when $ M \approx  e^{bN} $ as $N \to \infty $, then the system satisfies both the specificity and the sensitivity property. See Table \ref{table} for a summary of our results for that case. 

More precisely when $M \approx e^{b N } $ as $N \to \infty $, we prove that, under a suitable scaling of the unbinding time, the probability of transition from non-response to response is of the form $1-e^{- e^\xi}$, see Figure \ref{fig3}. 
The thickness of the region in which the transition from non-response to response takes place is of the order of $1/N$, see \eqref{thickness exp}. 
Moreover, we also prove that, when $M \approx e^{b N } $ as $N \to \infty $, the critical unbinding time  $\tau_c$, at the boundary between non-response, and to response does not depend in a significant manner on the number of ligands in the system if $\log(L) \ll N $.
In other words the critical time does not change for $L$ changing in several orders of magnitude of $L $. 
In this sense, we prove that the system satisfies the sensitivity property. 

Finally we compute the time that a ligand spends in the network to produce a signal (that can lead to response or to non-response). 
In the case in which the ligand is characterized by an unbinding time that is smaller than the critical unbinding time  (hence we do not expect response), then we obtain that, for large $N$, the average time that a ligand spends in the network is $\frac{\tau M }{2}$. Instead, when the critical unbinding time characterizing a ligand is larger than the critical unbinding time (hence we expect response), then we obtain that, for large $N$, the time that a ligand spends in the network is $\tau M e^{-\xi }$ where $\xi= N \left( \frac{\tau }{\tau_c }-1  \right) $.
Notice that, as could be expected, ligands with larger unbinding times spend less time in the network in order to trigger a reaction of the immune system. 
We refer to Section \ref{sec:time} for precise statements. 

We also demonstrate, in Section \ref{sec:different rates}, that our results for $M \approx e^{bN } $ as $N \to \infty $ are robust, i.e.~they do not depend on the fact that we choose the phosphorylation and the unbinding rates to be independent on the phosphorylation state of the complex ligand-receptor. 

Finally, we stress that, also when for biologically reasonable values of $M$ and $N$ that satisfy $1 \ll \log(M) \ll N $ and $ \log(M) \gg N $ the KPR network satisfies the specificity and the sensitivity property.

\begin{center} \renewcommand\arraystretch{2.5}

\captionof{table}{Results for $M \approx e^{b N } $ as $N \to \infty $} 
\label{table}
\begin{tabular}{|l|l|} 
\hline 
 &  $M \approx e^{b N } $ as $N \to \infty$ \\ \hline 
Critical unbinding time &  $\tau_c \sim  \frac{1}{e^b-1} \text{ as } N \to \infty  $    
      \\ \hline 
 Probability of response & $p_M \left(\tau_c \left( 1+\frac{\xi (1+\tau_c) }{N }\right)  \right)\sim 1- e^{- e^\xi}$ as $N \to \infty $ \\
        \hline 
  Thickness of transition (non-response/response) & $ \left|\frac{\tau}{\tau_c }-1  \right| \approx \frac{1}{N}  $ \\
   \hline 
  Probability distribution of time spent in the network & $ T_{receptor} \sim \begin{cases}
       \text{Exp} \left(\frac{e^\xi }{ \tau M } \right)    &\text{ if } \left(  \frac{\tau}{\tau_c } -1 \right)  \approx \frac{\xi }{N}  \\
     \mathcal U (0, \tau M ) &\text{ if } \left( 1- \frac{\tau}{\tau_c }  \right)  \gg \frac{1}{N} 
   \end{cases} $ \\
   \hline 
     Probability distribution of ATPs consumed & $ \mathcal E_N \sim \mathcal N (\tau M , \sqrt{\tau (1+\tau ) M }) $ as $N \to \infty$\\
   \hline
\end{tabular}
\end{center}

\bigskip

{\centering{\Large{\textbf{Appendices}}}}

\bigskip

 \appendix
 \section{Energy consumption}
 In this section of the appendix we prove the main theorems stated in Section \ref{thm: energy cons M=1} . 
 \subsection{$M=1$} \label{app:energy M=1}
 \begin{proof}[Proof of Theorem \ref{thm: energy cons M=1}]
Notice that 
\begin{align*}
    \mu_N (d \ell ) &=  \sum_{k=0}^N m_k (\infty) \delta\left(\ell - \frac{k}{N} \right) = 
    \sum_{k=0}^{N-1}  m_k (\infty) \delta\left(\ell - \frac{k}{N} \right) + m_N (\infty) \delta\left(\ell - 1 \right) \\
    &= \frac{1}{\tau} \sum_{k=0}^{N-1} \left( \frac{\tau }{1+\tau} \right)^{k+1 }  \delta\left(\ell - \frac{k}{N} \right) + \left( \frac{\tau }{1+\tau} \right)^N  \delta\left(\ell - 1 \right) . 
\end{align*}
where $\tau= x N $. 
Moreover, since 
\[
\left( \frac{\tau }{1+\tau} \right)^N = \left( \frac{ x N  }{1+ x N } \right)^N \sim e^{- \frac{1}{x} } \text{ as } N \rightarrow \infty, 
\]
we deduce that 
\[
 \left( \frac{\tau }{1+\tau} \right)^N  \delta\left(\ell - 1 \right) \rightharpoonup e^{-\frac{1}{x}} \delta(\ell -1 ) \text{ as } N \to \infty . 
\]
Moreover, 
\[
 \frac{1}{\tau} \sum_{k=0}^{N-1} \left( \frac{\tau }{1+\tau} \right)^{k+1 }  \delta\left(\ell - \frac{k}{N} \right) =  \frac{1}{xN} \sum_{k=0}^{N-1} \left( \frac{x N }{1+xN}\right)^{k+1} \delta \left(\ell - \frac{k }{N} \right) \rightharpoonup \frac{e^{- \frac{ \ell }{x }}}{x} \chi_{[0, 1) } ( d \ell ) 
\text{ as } N \to \infty. 
\]
Indeed for every $f \in C_b ([0, \infty)]) $ we have that
\[
\frac{1}{N }\sum_{k=0}^{N-1} \left( \frac{x N }{1+xN}\right)^{k+1} f\left( \frac{k}{N} \right) \rightarrow \int_0^1 e^{- \frac{ \ell }{x }} f(\ell) d\ell 
\]
as $N \to \infty $. 
The desired conclusion then follows. 
\end{proof}

 \subsection{$ M >1 $} \label{app:energy M>1}

   \begin{proof}[Proof of Theorem \ref{thm:energy M>1}]
    Since $Q_k $ is defined by \eqref{Q_k} we deduce that $ \nu_N = \overbrace{\mu_N * \mu_N * \dots * \mu_N}^{M \text{ times} } $. 
As a consequence we have that 
\begin{equation} \label{convolution nu}
\hat{\nu}_N(z) = \left( \hat{\mu}_N (z)\right)^M.  
\end{equation}
Since $\mu_N \rightharpoonup \mu $ as $N \rightarrow \infty $, by the duality between convergence of Laplace transforms on compact sets and weak convergence of measures (see \cite{feller1991introduction}), we have that $\hat{\mu}_N \rightarrow \hat{\mu }$ as $N \rightarrow \infty $ on compact sets. 
Therefore using \eqref{convolution nu} we deduce that $\hat{\nu}_N=\left( \hat{\mu}_N \right)^M   \rightarrow \left( \hat{\mu} \right)^M  $ on compact sets as $N \to \infty $. 
This implies that $ \nu_N \rightharpoonup \nu $, as $N \to \infty$, where 
\[ 
\hat{\nu} (z)= \left( \hat{\mu}(z) \right)^M  =\left( \frac{1}{1+ x z } \right)^M \left( 1+ x z e^{-\frac{1}{x}(1+ z x ) }\right)^M, \quad z \in \mathbb C, \ z \neq -\frac{1}{x} .
\] 
Indeed, 
\[
\hat{\mu } (z)=\int_0^\infty e^{ - z \ell } \mu(d \ell)=\int_0^1\frac{e^{-{\ell \left( z + \frac{1}{x}\right) } }}{x} d \ell  + e^{- \left( z+ \frac{1}{x}\right)} = \frac{1}{1+xz} + \left( \frac{xz}{1+ xz}\right) e^{-\frac{1}{x} \left( 1+ xz\right)}, \quad z \in \mathbb C, \ z \neq - \frac{1}{x} . 
\]
Therefore, in order to obtain the probability measure $\nu $ we need to compute the inverse Laplace of $ \hat{\nu }=  \left( \hat{\mu} \right)^M $. 
First of all we rewrite $\hat{\nu} $ as follows 
\begin{align*}
\hat{\nu}(z) &= \left( \frac{1}{1+ x z } \right)^M \left( e^{ z + \frac{1}{x}}+ x z \right)^M e^{- \frac{M}{x}} e^{ - M z} \\
&= \left[ \left( \frac{1}{1+ x z } \right)^M \left( e^{ z + \frac{1}{x}}+ x z \right)^M - 1 \right] e^{- \frac{M}{x}} e^{  - M z}  + e^{- \frac{M}{x}}  e^{ -zM} .
\end{align*}
Now notice that 
\[
e^{- \frac{M}{x}}  e^{  - M z} = \int_0^\infty e^{- \frac{M}{x}} \delta(\ell - M ) e^{-z \ell} d\ell = e^{- \frac{M}{x}} \widehat{\delta  (\cdot - M )} (z). 
\]
Therefore since the function $z \mapsto  \left[ \left( \frac{1}{1+ x z } \right)^M \left( e^{ z + \frac{1}{x}}+ x z \right)^M - 1 \right] e^{- \frac{M}{x}} e^{  - M z} $ is analytic we have that 
\begin{align*}
    \nu ( d \ell) &= \frac{1}{2 \pi i } \lim_{T \rightarrow \infty} \int_{a-i T }^{a+ i T } \left[ \left( \frac{1}{1+ x z } \right)^M \left( e^{ z + \frac{1}{x}}+ x z \right)^M - 1 \right] e^{- \frac{M}{x}} e^{ z (\ell - M)} dz + e^{- \frac{M}{x}} \delta(\ell - M ),
\end{align*}
where we consider  $a < -  \frac{1}{x} $.

Now notice that if $ \ell > M  $, then deforming the contour as $\Re(z) \rightarrow  -\infty $, we obtain 
\begin{align*}
  &   \frac{ e^{- \frac{M}{x}} }{2 \pi i } \lim_{T \rightarrow \infty} \int_{a-i T }^{a+ i T }  e^{ z (\ell - M)}  \left[ \left( \frac{1}{1+ x z } \right)^M \left( e^{ z + \frac{1}{x}}+ x z \right)^M - 1 \right] dz =0.
  \end{align*} 
  Indeed as $\Re(z) \rightarrow - \infty $ we have that 
  \[
   e^{ z (\ell - M)}  \left[ \left( \frac{1}{1+ x z } \right)^M \left( e^{ z + \frac{1}{x}}+ x z \right)^M - 1 \right] \leq e^{z (\ell - M) } \rightarrow 0 \text{ as } \Re (z) \rightarrow - \infty .
  \]

Therefore, this together,  with the binomial expansion of $(e^{z+\frac{1}{x}} -1 + 1-xz )^M$ and of $(e^{z+\frac{1}{x}}-1)^k$ we deduce that
\begin{align*}
  &   \frac{ e^{- \frac{M}{x}} }{2 \pi i } \lim_{T \rightarrow \infty} \int_{a-i T }^{a+ i T }  e^{ z (\ell - M)}  \left[ \left( \frac{1}{1+ x z } \right)^M \left( e^{ z + \frac{1}{x}}+ x z \right)^M - 1 \right] dz
     \\
     &=  \chi_{[0, M )} ( \ell) \frac{ e^{- \frac{M}{x}} }{2 \pi i } \lim_{T \rightarrow \infty} \int_{a-i T }^{a+ i T }  e^{ z (\ell - M)}  \left[ \left( \frac{1}{1+ x z } \right)^M \left( e^{ z + \frac{1}{x}} -1 + 1 + x z \right)^M - 1 \right] dz \\
         &=  \chi_{(0, M )} ( \ell)  \frac{ e^{- \frac{M}{x}} }{2 \pi i } \lim_{T \rightarrow \infty}\sum_{k=1}^M \binom{M}{k} \int_{a-i T }^{a+ i T }  e^{ z (\ell - M)}      \left( \frac{e^{ z + \frac{1}{x}} -1 }{1+ x z }\right)^k   dz \\
         &=  \chi_{(0, M )} ( \ell)  \frac{ e^{- \frac{M}{x}} }{2 \pi i } \lim_{T \rightarrow \infty}\sum_{k=1}^M \binom{M}{k}  \sum_{j=0}^k \binom{k }{j} (-1)^{k-j} \int_{a-i T }^{a+ i T }  e^{ z (\ell - M)}  e^{ j\left(z + \frac{1}{x}\right)}    \left( \frac{ 1 }{1+ x z }\right)^k   dz  \\
         &=  e^{- \frac{M}{x}} \sum_{k=1}^M \binom{M}{k}  \sum_{j=0}^k I_{k, j } (x,z; \ell )      
\end{align*}
where 
\[
I_{k,j} (x,z; \ell):=  \chi_{(0, M )} ( \ell)  \frac{ 1 }{2 \pi i }  \binom{k }{j} (-1)^{k-j}  e^{ \frac{j}{x}}  \lim_{T \rightarrow \infty}  \int_{a-i T }^{a+ i T }  e^{ z (\ell - M+ j )}    \left( \frac{ 1 }{1+ x z }\right)^k   dz. 
\]

Now notice that if $\ell - M + j =: r<0$, then
\begin{align} \label{comp I}
 \frac{ 1 }{2 \pi i }  \lim_{T \rightarrow \infty}  \int_{a-i T }^{a+ i T }  e^{ z r }    \left( \frac{ 1 }{1+ x z }\right)^k   dz
&  = \frac{ 1 }{2 \pi i } e^{- \frac{r}{x}} \lim_{T \rightarrow \infty}  \int_{a-i T }^{a+ i T }  e^{ r \left( z+ \frac{1}{x} \right)}    \left( \frac{ 1 }{1+ x z }\right)^k   dz  \nonumber\\
 &= \frac{1}{(k-1)!} e^{- \frac{r}{x}} \left( \frac{r}{x} \right)^{k-1} \frac{1}{x}   
\end{align}
The last equality follows by  contour integration as $\Re(z) \rightarrow \infty $ around the pole $ z=- \frac{1}{x} $ of the function 
\[
 z \mapsto e^{ r \left( z+ \frac{1}{x} \right)} \left(  \frac{ 1 }{1+ x z } \right)^k 
\]
and by Cauchy's Residue Theorem. 
Indeed 
\begin{align*}
  \int_{a-i T }^{a+ i T }  e^{ z r }    \left( \frac{ 1 }{1+ x z }\right)^k   dz &=   \oint_{C}  e^{ z r }    \left( \frac{ 1 }{1+ x z }\right)^k   dz-   \int_{\Gamma_1 }  e^{ z r }    \left( \frac{ 1 }{1+ x z }\right)^k   dz -   \int_{\Gamma_2}  e^{ z r }    \left( \frac{ 1 }{1+ x z }\right)^k   dz  \\
  & - \int_{\Gamma_3}  e^{ z r }    \left( \frac{ 1 }{1+ x z }\right)^k   dz
\end{align*} 
where $\Gamma_1 $ is the line in the complex plan connecting $a+i T $ to $v + i T $ (where $v > -1/x$), $\Gamma_2 $ is the line connecting $v+ iT $ to $v- i T$ while $\Gamma_3 $ is the line connecting $v- i T $ to $ a- iT $, finally $C$ us the cycle obtain by concatenating the line from $a-i T $ to $a+ i T $ with $\Gamma_1 $, $\Gamma_2$ and $\Gamma_3$. 
Now notice that 
\[
\lim_{v \rightarrow \infty } \lim_{ T \to \infty } \int_{\Gamma_2 } e^{ z r }    \left( \frac{ 1 }{1+ x z }\right)^k   dz   =0\  \text{ and } \lim_{ T \to \infty } \int_{\Gamma_i } e^{ z r }    \left( \frac{ 1 }{1+ x z }\right)^k   dz  =0 \text{ for }  i= 1,3. 
\]
Finally notice that by Cauchy Residue Theorem we have that 
\[
\frac{1}{2 \pi i }   \oint_{C}  e^{ z r }    \left( \frac{ 1 }{1+ x z }\right)^k   dz = - \operatorname{Res}_{z=-1/x} \left( e^{ z r }    \left( \frac{ 1 }{1+ x z }\right)^k \right) =  -  \frac{1}{(k-1)!} e^{- \frac{r}{x}} \left( \frac{r}{x} \right)^{k-1} \frac{1}{x}
\]
Therefore \eqref{comp I} follows. 

On the other hand, with the same type of arguments used to derive \eqref{comp I} we have that if $r= \ell - M + j =0$, then 
\begin{align*}
 \frac{ 1 }{2 \pi i }  \lim_{T \rightarrow \infty}  \int_{a-i T }^{a+ i T }     \left( \frac{ 1 }{1+ x z }\right)^k   dz =
 \begin{cases}
     & 0 \text{ if } k >1, \\
     & - \frac{1}{x}   \text{ if } k=1 
 \end{cases}
\end{align*}
Finally  when $ r > 0 $ we have that contour deformations as $\Re(z) \rightarrow - \infty $ imply that
\begin{align*}
 \frac{ 1 }{2 \pi i }  \lim_{T \rightarrow \infty}  \int_{a-i T }^{a+ i T }  e^{ z r }    \left( \frac{ 1 }{1+ x z }\right)^k   dz=0. 
\end{align*}

As a consequence we deduce that for $ 1 \leq   k \leq M$, $ 0 \leq j \leq k $
\begin{align*}
I_{k,j} (z,x; \ell )=  \binom{k }{j} (-1)^{k-j+1}  \frac{1}{(k-1)!} e^{- \frac{\ell - M  }{x}} \left( \frac{\ell - M  + j }{x} \right)^{k-1} \frac{1}{x} \chi_{[0, M-j ] } ( \ell) d \ell . 
\end{align*} 
As a consequence
\begin{align*}
    \nu ( d \ell) &= e^{- \frac{M}{x}} \delta(\ell - M )+  e^{- \frac{M}{x}} \sum_{k=1}^M \binom{M}{k}  \sum_{j=0}^k I_{k, j } (x,z; \ell )
\end{align*}
and the statement follows. 
\end{proof}

 \subsection{$M \gg  1$ as $N \to \infty$} \label{app:energy M>>1}
In this section we prove Theorem \ref{thm:gaussian ATP}. 
To this end, it is convenient to introduce the generating function  associated to the probability densities of $ \{ m_k (\infty) \}_{k=0}^{N}$, i.e.
\[ 
G(z):= \sum_{k=0}^N z^k m_k(\infty ), \quad z \in \mathbb C. 
\]

\begin{lemma} \label{lem:expression for G}
Let $\tau >0$.
The generating function $G$ of $ \{ m_k (\infty) \}_{k=0}^{N}$ is given by 
    \begin{equation} \label{G}
        G(z)=\frac{z_0-1 }{z_0-z } \left( 1 + \left( \frac{z}{z_0} \right)^N \tau (1-z)  \right), \quad z \in \mathbb C 
    \end{equation}
    where 
    \begin{equation} \label{z0}
    z_0:= 1+ \frac{1}{\tau}. 
    \end{equation}
\end{lemma}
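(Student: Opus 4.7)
The plan is to simply substitute the closed-form expressions for $m_k(\infty)$ given in \eqref{m_k}, split the sum according to whether $k < N$ or $k = N$, and evaluate the finite geometric series that appears.

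More precisely, first I would set $r := \tau/(1+\tau)$, so that $m_k(\infty) = (1-r) r^k$ for $k \in \{0,\dots,N-1\}$ and $m_N(\infty) = r^N$. Then
\begin{equation*}
G(z) \;=\; \sum_{k=0}^{N-1} (1-r)\,(rz)^k \;+\; (rz)^N \;=\; (1-r)\,\frac{1-(rz)^N}{1-rz} \;+\; (rz)^N,
\end{equation*}
using the standard formula for a geometric sum. I would then place everything over the common denominator $1-rz$ and collect terms, obtaining after a short calculation
\begin{equation*}
G(z) \;=\; \frac{1-r + r(rz)^N(1-z)}{1-rz}.
\end{equation*}

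The last step is purely cosmetic: rewrite everything in terms of $z_0 = 1 + 1/\tau = 1/r$. Since $1-r = (z_0-1)/z_0$, $1-rz = (z_0-z)/z_0$, and $r(rz)^N = z^N/z_0^{N+1}$, the above display becomes
\begin{equation*}
G(z) \;=\; \frac{z_0-1}{z_0-z} \;+\; \frac{z^N(1-z)}{(z_0-1)\,z_0^N}\cdot\frac{z_0-1}{z_0-z}.
\end{equation*}
Factoring out $(z_0-1)/(z_0-z)$ and using the identity $\tau = 1/(z_0-1)$ (immediate from the definition \eqref{z0}) yields exactly \eqref{G}.

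There is no real obstacle here: everything is a one-line manipulation once the two pieces of the defining sum are separated. The only mild point of care is to make sure the $k=N$ term is treated separately, since it does not follow the pattern $(1-r) r^k$ of the preceding terms, and this discrepancy is precisely what produces the correction factor $\tau(1-z)(z/z_0)^N$ in the stated formula.
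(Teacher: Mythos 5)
Your proposal is correct and follows essentially the same route as the paper's proof: split off the $k=N$ term, sum the finite geometric series over $k=0,\dots,N-1$ (the paper uses $a=\tau z/(1+\tau)$ where you use $rz$), and rewrite in terms of $z_0=1/r$. The algebra checks out, including the identification $\tau = 1/(z_0-1)$ that produces the stated correction factor.
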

\begin{proof}
The definition of $G$, implies that 
\begin{align} \label{computation G}
G(z)= \left( \frac{ z \tau }{1+\tau } \right)^N + \sum_{k=0}^{N-1} \left(\frac{1}{1+\tau } \right) \left( \frac{\tau z }{1+\tau }\right)^k = a^N + \frac{S}{1+\tau },   
\end{align} 
where 
\begin{equation} \label{a}
a:=\frac{ z \tau }{1+\tau } 
\end{equation} 
and $S:= \sum_{k=0}^{N-1} a^k $. 
Since $S= \frac{1-a^N }{1-a }. $
we have that \eqref{computation G} can be rewritten as 
\begin{align*}
G(z)= a^N +  \frac{1}{1+\tau } \left(\frac{1-a^N} {1-a}  \right) =  \frac{1}{(1+\tau) (1-a) } \left( 1+ a^N \left( \tau -\tau a -a \right) \right).  
\end{align*} 
Using the fact that $a $ is given by \eqref{a} we deduce that 
\begin{equation} \label{G almost final}
G(z)= \frac{ 1+ a^N \tau (1-z) }{1+\tau(1-z) }. 
\end{equation}
Equality \eqref{G} follows by substituting \eqref{z0} and \eqref{a} in \eqref{G almost final}. 
\end{proof}
\begin{lemma} \label{lem:asympt of G}
Assume that $\lim_{N \to \infty} M = \infty $, that $\tau $ is given by \eqref{tau M to infty} and that $|z-1 | \leq   \frac{\log(M)}{ N \sqrt{M} } $.
Then 
    \[
\lim_{N \to \infty} \left( 1 + \left( \frac{z}{z_0} \right)^N \tau (1-z) \right)^M =1. 
    \]
    where $z_0$ is given by \eqref{z0}. 
\end{lemma}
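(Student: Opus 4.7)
The plan is to show that $w_N := \left(\tfrac{z}{z_0}\right)^N \tau(1-z)$ satisfies $M w_N \to 0$ uniformly on the disc $|z-1|\leq \log(M)/(N\sqrt{M})$, and then conclude via the elementary expansion $(1+w)^M = \exp\bigl(Mw + O(M|w|^2)\bigr)$.

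First I would rewrite $z/z_0 = z\tau/(1+\tau)$ using \eqref{z0}, so that $(z/z_0)^N = z^N (\tau/(1+\tau))^N$. The computation already carried out in the proof of Theorem \ref{thm: M >>1} (equation \eqref{exp from tau} combined with the defining relation $M(\tau_c/(1+\tau_c))^N = 1$) shows that $M (\tau/(1+\tau))^N \to e^x$ as $N\to\infty$, and in particular stays bounded by a constant $C(x)$. Next, the trivial estimate $|z|\leq 1+|z-1|$ gives $|z|^N \leq \exp(N|z-1|) \leq \exp(\log(M)/\sqrt{M})$, which tends to $1$ as $M\to\infty$.

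Multiplying these three estimates yields, for all sufficiently large $N$,
$M|w_N| \leq C(x)\,\exp(\log(M)/\sqrt{M})\,\tau\,\dfrac{\log M}{N\sqrt{M}}$,
so the task reduces to showing $\tau \log(M)/(N\sqrt{M}) \to 0$. I would verify this by splitting into the three regimes of Proposition \ref{lem:asympt tau_c M less than expo}: in regime (1) ($\log M \ll N$), $\tau_c\sim N/\log M$ gives a bound $O(1/\sqrt{M})$; in regime (2) ($\log M \sim bN$), both $\tau$ and $\log(M)/N$ are bounded, so the bound is again $O(1/\sqrt{M})$; in regime (3) ($\log M \gg N$), $\tau\to 0$, so $\tau\log(M)/(N\sqrt{M}) \leq \log(M)/(N\sqrt{M}) \to 0$ as $M\to\infty$.

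Once $M w_N \to 0$ (and consequently $M|w_N|^2 \leq |M w_N|\cdot|w_N|\to 0$), the desired conclusion $(1+w_N)^M \to 1$ follows immediately from $(1+w_N)^M = \exp(Mw_N + O(M|w_N|^2))$. The only real subtlety is the three-regime case analysis at the end; the hypothesis threshold $\log(M)/(N\sqrt{M})$ is tuned precisely so that the factor $\tau|1-z|$ beats the $1/M$ supplied by $(\tau/(1+\tau))^N$ in every one of those regimes.
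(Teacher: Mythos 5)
Your argument is correct and follows essentially the same route as the paper's proof: both factor $(z/z_0)^N = z^N(\tau/(1+\tau))^N$, use the bound $M(\tau/(1+\tau))^N \lesssim e^x$ from the proof of Theorem \ref{thm: M >>1}, control $|z|^N$ by $\exp(\log(M)/\sqrt{M})$, and reduce to $\tau\log(M)/(N\sqrt{M})\to 0$ via the asymptotics of $\tau_c$ in Proposition \ref{lem:asympt tau_c M less than expo}. Your explicit three-regime verification of that last limit is merely a spelled-out version of the step the paper leaves implicit.
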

\begin{proof}
    First of all we prove that
    \[ 
    \lim_{N \to \infty} \left|M \left( \frac{z}{z_0} \right)^N \tau (1-z)\right| =0. 
    \]  
    Notice that by  the definition of $z_0$, \eqref{z0}, we have that $\frac{1}{z_0} = \frac{\tau}{1+\tau }$. Therefore, as in \eqref{exp from tau} in the proof of Theorem \ref{thm: M >>1}, using that $\tau$ satisfies \eqref{tau M to infty}, we deduce that 
    \[
    \left(\frac{\tau}{1+\tau } \right)^N \leq  e^x\frac{1}{M}. 
    \] 
    As a consequence 
    \begin{align*}
\left| M \left( \frac{z}{z_0} \right)^N \tau (1-z) \right| = \left|  M z^N \left(\frac{\tau}{1+\tau } \right)^N \tau (1-z) \right|  \leq e^x  \left| z^N \tau (1-z) \right| \leq   e^x \tau  e^{ \frac{\log(M)}{\sqrt{M}}}  \frac{\log(M) } { N \sqrt{M} } . 
    \end{align*}
Since $\tau $ is given by \eqref{tau M to infty}, the asymptotic behaviour of $\tau_c $ in Proposition \ref{lem:asympt tau_c M less than expo} implies that
    \[
 \lim_{N \to \infty} \left| M \left( \frac{z}{z_0} \right)^N \tau (1-z) \right| =0.  
\] 
As a consequence 
    \begin{align*}
\lim_{N \to \infty} \left( 1 + \left( \frac{z}{z_0} \right)^N \tau (1-z) \right)^M &= \lim_{N \to \infty} \exp \left[ M \log \left( 1 + \left(  \frac{z}{z_0}\right)^N \tau (1-z)  \right)  \right] \\
&= \lim_{N \to \infty } \exp \left[  M \left(  \frac{z}{z_0}\right)^N \tau (1-z)   \right] = 1 . 
    \end{align*} 
\end{proof}

\begin{proof}[Proof of Theorem \ref{thm:gaussian ATP}]
Since $G(z)^M $ is the generating function of $Q_k $ to prove the theorem we want to compute
    \begin{align*}
    Q_k &= \frac{1}{2 \pi i } \oint_{\partial B_\delta(0)} \frac{G(z)^M } {z^{k+1} } dz \text{ as } N \rightarrow \infty \text{ for } k = \frac{\tau M }{\sqrt{(1+\tau) \tau M }}, 
    \end{align*}
    where $\delta < z_0$. 
    First of all we notice that the main contribution to this integral is due to the region of values of $z$ around the value $\hat{z}$ at which $ \left| \frac{G(z)^M }{z^k }\right| $ reaches the maximum.
    Notice that when $ k \sim \tau M $ as $N \to \infty $ we have that $\hat{z} \rightarrow 1$, hence $ \left| \frac{G(z)^M }{z^{1+k} }\right| \leq \frac{1}{z}$ for every $z \in \mathbb C$. Therefore deforming the contour as $\Re(z) \rightarrow - \infty $ and Lemma \ref{lem:asympt of G} we deduce that, if $\delta \leq  1+ \frac{\log(M)}{N \sqrt{M}} $, then 
    \begin{align*}
    Q_k &= \frac{1}{2 \pi i } \oint_{\partial B_\delta(0)} \frac{G(z)^M } {z^{k+1} } dz \sim 
     \frac{1}{2 \pi i } \oint_{\partial B_\delta(0)} \frac{ \left(  G_\infty (z)\right)^M } {z^{k+1} }  dz \text{ as } N \rightarrow \infty, 
    \end{align*}
    where $G_\infty (z)= \frac{z_0-1}{z_0-z}$. 
    By Cauchy Residue Theorem we deduce that
     \begin{align*}
 \frac{1}{2 \pi i } \oint_{\partial B_\delta(0)} \frac{\left(  G_\infty (z)\right)^M } {z^{k+1} }   dz &= Res \left( \frac{\left(  G_\infty (z)\right)^M } {z^{k+1} },  0  \right)
 = \frac{1}{k !} {\frac{d^{k} }{d z^k} (G_\infty (z))^M }_{| z=0}
  \\
  &= \frac{M (M+1) \dots (M+k-1)}{k !} (z_0-1)^M \left( \frac{1}{z_0}\right)^{M+k} \\
  &= \binom{ M+k -1 }{ k  } \left(1 -\frac{1}{z_0} \right)^M \left( \frac{1}{z_0}\right)^{k}. 
    \end{align*}  
    Since $\lim_{N \to \infty} M = \infty $ as well as $\lim_{N \to \infty} k = \infty $ we apply Stirling formula to deduce that 
    \begin{align*}
\binom{ M+k -1 }{ k  } \left(1 -\frac{1}{z_0} \right)^M \left( \frac{1}{z_0}\right)^{k} & = \frac{M }{ M + k }  \binom{ M+k }{ k  } \left(1 -\frac{1}{z_0} \right)^M \left( \frac{1}{z_0}\right)^{k} \\
& \sim \frac{M }{ M + k }  \sqrt{\frac{M+k }{ 2 \pi k M}} \left( \frac{k + M }{ k z_0 } \right)^k \left(  \frac{k + M }{ M } \left( 1- \frac{1}{z_0}\right) \right)^M \text{ as } N \rightarrow \infty. 
    \end{align*}
    We now perform the change of variables $\ell = \frac{k - \tau M }{\sqrt{M\tau (1+\tau) }}$ and use the fact that $z_0$ is given by \eqref{z0} to deduce that 
    \[
  \left(  \frac{k + M }{ M } \left( 1- \frac{1}{z_0}\right) \right)^M   = \exp \left( M \ln \left(  1+ \ell \sqrt{ \frac{\tau}{M(1+\tau )}}  \right)  \right) \sim \exp \left(  \ell \sqrt{  \frac{M\tau}{1+\tau }}  \right), \text{ as } N \to \infty. 
    \]
     On the other hand, 
     \begin{align*}
  \left(  \frac{k + M }{ k }  \frac{1}{z_0} \right)^k  &= \exp \left( - (M \tau + \ell\sqrt{M\tau (1+\tau)}) \ln \left( \frac{k z_0} {M+k } \right) \right) \\
  &\sim  \exp \left( - (M \tau + \ell\sqrt{M\tau (1+\tau)})\ln \left( \frac{k z_0} {M+k } \right) \right) \\
  & \sim  \exp \left( - (M \tau + \ell\sqrt{M\tau (1+\tau)})\frac{ \ell \sqrt{M (1+\tau) \tau }  } { M \tau (1+\tau)} \right) \\
  & \sim  \exp \left( -\left( \ell^2 + \ell \sqrt{\frac{ M\tau} { 1+ \tau }} \right)  \right) \text{ as } N \rightarrow \infty. 
  \end{align*} 
    Combining the two asymptotics we deduce that for $\ell = \frac{k - \tau M }{\sqrt{M\tau (1+\tau) }}$ we have that 
      \begin{align*}
\binom{ M+k -1 }{ k  } \left(1 -\frac{1}{z_0} \right)^M \left( \frac{1}{z_0}\right)^{k} 
\sim \sqrt{\frac{ 1   }{ 2 \pi }} e^{ -  \ell^2  }\text{ as } N \rightarrow \infty. 
    \end{align*}
    Therefore the desire result follows. 
\end{proof}

  \section{Speed ($M \approx e^{b N }$ as $N \to \infty $)} \label{app:speed}
  The aim of this Section is to prove Theorem \ref{thm:time}. To this end we start by computing the Laplace transform of the function $\Psi_M$ given by \eqref{psi_M}. 

  \begin{lemma}
 Let $N \geq 1$, $M \geq 1$ and $\tau >0$. 
 Let $ \Psi_M$ be given by \eqref{psi_M}. 
 Then the Laplace transform of $\psi_M $ is given by 
 \begin{equation}\label{laplace psi_M}
\hat{ \Psi}_M  (z) = \frac{1}{1- \hat{s}(z)} \left(  \frac{1}{1+ z + \frac{1}{\tau} } \right)^N \frac{1- (\hat{s }( z ))^M }{1- (\hat{s }( 0 ))^M}, \quad z \in \mathbb C 
 \end{equation}
 where $\hat{s} $ is the Laplace transform of \eqref{eq:s} and is given by 
 \[
 \hat{s}(z)= \frac{1}{1+\tau  z } \left( 1- \left( \frac{1}{1+z + \frac{1}{\tau}}\right)^N \right),\  z \in \mathbb C. 
 \]
 \end{lemma}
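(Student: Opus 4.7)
The plan is a direct computation based on the convolution theorem for the Laplace transform, together with the explicit expressions for $n_k(t)$ from Lemma \ref{lem:solution ODE}. The only nontrivial step is matching the normalizing constant in the denominator of \eqref{psi_M} with the factor $1-(\hat{s}(0))^M$ appearing in \eqref{laplace psi_M}.

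First I would compute $\hat{n}_k(z)$ for each $k=1,\dots,N$. Since $n_k(t)=\frac{t^{k-1}}{(k-1)!} e^{-t(1+\tau)/\tau}$, the standard formula $\int_0^\infty \frac{t^{k-1}}{(k-1)!}e^{-\alpha t}\,dt=\alpha^{-k}$ applied with $\alpha=z+\frac{1+\tau}{\tau}=1+z+\frac{1}{\tau}$ gives
\[
\hat{n}_k(z) = \left(\frac{1}{1+z+\frac{1}{\tau}}\right)^k.
\]
In particular $\hat{r}(z)=\hat{n}_N(z)=\bigl(1+z+\tfrac1\tau\bigr)^{-N}$, which is the middle factor in \eqref{laplace psi_M}. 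Next I would sum the geometric series for $\hat{s}(z)=\frac{1}{\tau}\sum_{k=1}^N\hat{n}_k(z)$. Setting $w=(1+z+\tfrac1\tau)^{-1}$, one has $\hat s(z)=\frac{1}{\tau}\cdot\frac{w(1-w^N)}{1-w}$, and the identity $\frac{w}{1-w}=\frac{1}{z+1/\tau}=\frac{\tau}{1+\tau z}$ immediately yields the announced expression for $\hat{s}(z)$.

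Then I would apply the convolution theorem to the numerator of \eqref{psi_M}. Since the Laplace transform turns convolution into multiplication, the unnormalized numerator has Laplace transform
\[
\sum_{j=1}^M \hat{s}(z)^{j-1}\hat{r}(z) \;=\; \hat{r}(z)\,\frac{1-\hat{s}(z)^M}{1-\hat{s}(z)}.
\]
Dividing by the normalization $p_M(\tau)$ from the definition of $\Psi_M$ gives
\[
\hat{\Psi}_M(z) \;=\; \frac{1}{p_M(\tau)}\,\frac{1}{1-\hat{s}(z)}\left(\frac{1}{1+z+\frac{1}{\tau}}\right)^{\!N} \bigl(1-\hat{s}(z)^M\bigr).
\]

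The last step — and the only place where one must be a little careful — is to identify $p_M(\tau)=1-\hat{s}(0)^M$. Evaluating at $z=0$ gives $\hat{s}(0)=1-\bigl(\frac{\tau}{1+\tau}\bigr)^N=1-p(\tau)$ by \eqref{p(tau)}, so $1-\hat{s}(0)^M=1-(1-p(\tau))^M=p_M(\tau)$ by \eqref{p(tau)_M}. Substituting this into the denominator yields exactly \eqref{laplace psi_M}. As a consistency check one verifies $\hat{\Psi}_M(0)=1$, confirming that $\Psi_M$ is indeed a probability density. The proof does not involve any genuine obstacle; the only subtlety is being consistent about the normalization through the identification $p_M(\tau)=1-\hat{s}(0)^M$.
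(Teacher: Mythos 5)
Your proposal is correct and follows essentially the same route as the paper: compute $\hat{n}_k(z)=\bigl(1+z+\tfrac1\tau\bigr)^{-k}$, sum the geometric series to get $\hat{s}$, apply the convolution theorem to the numerator of \eqref{psi_M}, and identify $p_M(\tau)=1-\hat{s}(0)^M$. The only (immaterial) difference is that you sum $\sum_{j=1}^M\hat{s}^{j-1}$ directly into $\frac{1-\hat{s}^M}{1-\hat{s}}$, whereas the paper reaches the same expression via the identity $\hat{r}(z)=1-(1+\tau z)\hat{s}(z)$ and a few extra lines of algebra.
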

 \begin{proof}
     First of all notice that \eqref{psi_M} implies that 
     \[
     \hat{\Psi}_M(z) = \frac{1}{p_M(\tau) } \hat{r}(z) \sum_{j =1}^M \hat{s}^{j-1}(z) 
     \]
     where $p_M(\tau ) = 1 - \left( 1- \left( \frac{\tau }{1+\tau }\right)^N \right)^M= 1- (\hat{s}(0))^M$ and
     \begin{equation} \label{hat s}
     \hat{s }(z)= \frac{1}{\tau} \sum_{k =1 }^{N} \hat{n}_k(z)  =  \frac{1}{\tau} \sum_{k =1 }^{N}  \left(\frac{1}{1+z+\frac{1}{\tau}} \right)^k=  \frac{1}{\tau}\left(\frac{1}{1+z+\frac{1}{\tau}} \right) \sum_{k =0 }^{N-1}  \left(\frac{1}{1+z+\frac{1}{\tau}} \right)^k=\frac{ \left( 1+z + \frac{1}{\tau} \right)^N -1 }{(1+ \tau z )\left( 1+ z + \frac{1}{\tau} \right)^N }
     \end{equation}
     and     
     \[ 
     \hat{r}(z)= \hat{n}_N (z)=\left(\frac{1}{1+z+\frac{1}{\tau}} \right)^N = 1- (1+ \tau z ) \hat{s } (z) 
     \] 
 Therefore 
 \begin{align*}
 \hat{\Psi}_M(z) &= \frac{1}{1-  (\hat{s} (0))^M } ( 1- (1+ \tau z ) \hat{s } (z)  ) \sum_{j =1}^M \hat{s}^{j-1}(z) = \frac{1- (\hat{s}(z))^M -  \tau z  \sum_{j =1}^M \hat{s}^{j}(z)}{1-  (\hat{s} (0))^M } \\
 &= \frac{1}{1-  (\hat{s} (0))^M }\left( 1- (\hat{s}(z))^M -  \tau z  \hat{s}(z) \frac{1- (\hat{s}(z))^M}{ 1 - \hat{s}(z)} \right) =  \frac{1- (\hat{s}(z))^M}{1-  (\hat{s} (0))^M }\left( 1 -   \frac{\tau z  \hat{s}(z)}{ 1 - \hat{s}(z)} \right) \\
 &= \frac{1- (\hat{s}(z))^M}{1-  (\hat{s} (0))^M }   \frac{ 1- (1+\tau z ) \hat{s}(z)}{ 1 - \hat{s}(z)}  =   \frac{1}{1- \hat{s}(z)} \left(  \frac{1}{1+ z + \frac{1}{\tau} } \right)^N \frac{1- (\hat{s }( z ))^M }{1- (\hat{s }( 0 ))^M}
 \end{align*}
 where in the last equality we have used \eqref{hat s}. 
 \end{proof}

 We now write the proof of Proposition \ref{lem:scaled time}. 
 This lemma will be later used in the proof of Theorem \ref{thm:time}. 
 \begin{proof}[Proof of Proposition \ref{lem:scaled time}]
     First of all we prove that 
     \begin{equation} \label{eq s(z) s(0)}
     \lim_{N \to \infty } e^{ \zeta } \left(  \frac{ \hat{s} \left(\frac{\zeta }{\tau M } \right) }{ \hat{s} (0)} \right)^M =1 
     \end{equation}
     Indeed 
     \begin{align*}
 \left(    \frac{\hat{s} \left(\frac{\zeta }{\tau M } \right) }{ \hat{s} (0)}  \right)^M &= \left( \frac{1}{1+ \frac{\zeta }{M }} \right)^M  \left(  \frac{ 1- \left( \frac{1}{1+\frac{\zeta}{\tau M } + \frac{1}{\tau}} \right)^N }{1- \left( \frac{1}{1+\frac{1}{\tau} } \right)^N } \right)^M \sim e^{- \zeta }  \left(  \frac{ 1- \left( \frac{1}{1+ \frac{1}{\tau}}\right)^N \left( \frac{1+ \frac{1}{\tau}}{1+\frac{\zeta}{\tau M } + \frac{1}{\tau}} \right)^N }{1- \left( \frac{1}{1+\frac{1}{\tau} } \right)^N } \right)^M \\
 & \sim  e^{- \zeta }  \left(  \frac{ 1- \left( \frac{1}{1+ \frac{1}{\tau}}\right)^N \exp \left( - \frac{N \zeta }{M (1+\tau) }\right)  }{1- \left( \frac{1}{1+\frac{1}{\tau} } \right)^N } \right)^M  \sim  e^{- \zeta }  \left(  \frac{ 1- \left( \frac{1}{1+ \frac{1}{\tau}}\right)^N\left( 1- \frac{N \zeta }{M (1+\tau) }\right)  }{1- \left( \frac{1}{1+\frac{1}{\tau} } \right)^N } \right)^M \\
 & \sim  e^{- \zeta }  \left(   1- \left( \frac{1}{1+ \frac{1}{\tau}}\right)^N+ \left( \frac{1}{1+ \frac{1}{\tau}}\right)^N  \frac{N \zeta }{M (1+\tau) }  \right)^M \\
 & \sim  e^{- \zeta }  \left(   1+ \left( \frac{1}{1+ \frac{1}{\tau}}\right)^N  \frac{N \zeta }{M (1+\tau) }  \right)^M \sim  e^{- \zeta } \exp \left[  \left( \frac{\tau }{1+\tau }\right)^N  \frac{N \zeta }{ (1+\tau) }  \right] \text{ as } N \to \infty. 
     \end{align*}
     Now notice that, as in the proof of Theorem \ref{thm: M >>1} we have that 
     \[
0 \leq  \left( \frac{\tau }{1+\tau }\right)^N  \frac{N \zeta }{ (1+\tau) }   \leq \frac{1}{M } \left( 1+ \frac{\xi }{N } \right)^N \frac{ N \zeta }{ 1+ \tau } \leq  e^\xi \frac{ N \zeta }{ M ( 1+ \tau )  } \rightarrow 0 \text{ as } N \to \infty 
     \] 
     Hence $
 \left(    \frac{\hat{s} \left(\frac{\zeta }{\tau M } \right) }{ \hat{s} (0)}  \right)^M \sim e^{- \zeta }
$ as $N \to \infty $. Therefore \eqref{eq s(z) s(0)} follows. 

Now recall that
\[
\left( \frac{\tau }{1+\tau }\right)^N \sim \frac{1}{M } e^{ \xi } \text{ as } N \to \infty. 
\]
Hence 
\begin{align*}
    1- \hat{s}\left( \frac{\zeta }{ \tau M }\right) = \frac{1}{M } \left( \frac{\zeta}{ 1+ \frac{\zeta }{M }}  + \frac{M }{ 1+ \frac{\zeta }{M }} \left( \frac{\tau }{1+\tau }\right)^N  \right) \sim \frac{1}{M } \left( \zeta+ e^{ \xi } \right) \text{ as } N \to \infty 
\end{align*}
Therefore using \eqref{laplace psi_M} we deduce that 
\begin{align*}
   \hat{ \Psi}_M \left( \frac{\zeta }{ \tau M } \right) &\sim \frac{M }{  \zeta+ e^{ \xi } } \left( \frac{1}{1+\frac{\zeta }{\tau M } + \frac{1 }{\tau }} \right)^N  \frac{1- (\hat{s}(0))^M e^{- \zeta } }{1- (\hat{s}(0))^M } \\
   & \sim 
 \frac{1}{ \zeta+ e^{ \xi } }  M\left( \frac{\tau}{1+\tau} \right)^N  \frac{1- (\hat{s}(0))^M e^{- \zeta } }{1- (\hat{s}(0))^M } \sim 
   \frac{e^{ \xi}}{ \zeta+ e^{ \xi }} \frac{1- (\hat{s}(0))^M e^{- \zeta } }{1- (\hat{s}(0))^M } \\
   & \sim    \frac{e^{ \xi}}{ \zeta+ e^{ \xi }} \frac{1- e^{- e^{ \xi}} e^{- \zeta } }{1-  e^{- e^{ \xi}} } \text{ as } N \to \infty
\end{align*}
where in the last step we used the fact that 
\[
\hat{s }(0)^M = \left( 1- \left(\frac{\tau }{1+\tau }\right)^N \right)^M \sim \exp \left( - M \left( \frac{\tau }{1+\tau }\right)^N \right) \sim \exp \left( - \exp ( \xi ) \right) \text{ as } N \to \infty. 
\]
 \end{proof}

\begin{proof}[Proof of Theorem \ref{thm:time}]

By Proposition \ref{lem:scaled time} we know that 
taking the limits as $| \xi | \rightarrow \infty $ in \eqref{asympt psi_M} we obtain that
\[
\lim_{\xi \to - \infty } \lim_{N \to \infty } \hat{\Psi}_M\left( \frac{ \zeta  }{ \tau M }\right)= \lim_{\xi \to - \infty } \frac{1- e^{- \zeta} e^{- e^{ \xi }}}{ 1 - e^{- e^{ \xi }} } \cdot \frac{e^{\xi }}{\zeta + e^{\xi }} = \frac{1- e^{- \zeta} }{\zeta }
\]
and 
\[
  \lim_{\xi \to  \infty } \lim_{N \to \infty }  \hat{\Psi}_M\left( \frac{ \zeta e^{\xi} }{ \tau M }\right) =  \lim_{\xi \to  \infty }  \frac{1- e^{- \zeta e^\xi } e^{- e^{ \xi }}}{ 1 - e^{- e^{ \xi }} } \cdot \frac{e^{ \xi }}{\zeta e^\xi  + e^{\xi }} = \frac{1}{1+\zeta }. 
\] 
Now notice that $ \widehat{ e^{-t} } (\zeta) =  \frac{1}{1+\zeta}$ and $\widehat{\chi_{[0, 1]} } (\zeta )= \frac{1- e^{- \zeta }}{\zeta } $. 
Since the convergence of the Laplace transforms of measures on compact sets implies the weak convergence of the measures (see \cite{feller1991introduction}) the desired conclusion follows.

\end{proof}

\textbf{Acknowledgements} The authors gratefully acknowledge the support by the Deutsche Forschungsgemeinschaft (DFG) through the collaborative research centre "The mathematics of emerging effects" (CRC 1060, Project-ID 211504053) and Germany's Excellence StrategyEXC2047/1-390685813.
The funders had no role in study design, analysis, decision to publish, or preparation of the manuscript.

\bigskip 

\bigskip

\textbf{E. Franco}: Institute for Applied Mathematics, University of Bonn,

\hspace{0.5 cm} Endenicher Allee 60, D-53115 Bonn, Germany

\hspace{0.5 cm} E-mail: franco@iam.uni-bonn.de

\hspace{0.5 cm} ORCID 0000-0002-5311-2124

\bigskip 

\textbf{J. J. L. Vel\'azquez}: Institute for Applied Mathematics, University of Bonn,

\hspace{0.5 cm} Endenicher Allee 60, D-53115 Bonn, Germany

\hspace{0.5 cm} E-mail: velazquez@iam.uni-bonn.de

\bibliographystyle{siam}

\bibliography{References}
\end{document}